\newtheorem{theorem}{Theorem}[section]
\newtheorem{lemma}[theorem]{Lemma}
\newtheorem{prop}[theorem]{Proposition}
\newtheorem{procedure}[theorem]{Procedure}
\theoremstyle{definition}
\newtheorem{definition}[theorem]{Definition}
\theoremstyle{remark}
\newtheorem{rem}[theorem]{Remark}
\newtheorem{example}[theorem]{Example}
\newcommand\bbm{\begin{bmatrix}}
\newcommand\ebm{\end{bmatrix}}
\newcommand{\Mod}[1]{\ (\mathrm{mod}\ #1)}
\newcommand\Gray[1]{\color{gray}#1\color{black}}
\renewcommand{\author}[2][]{%
  \def\@tempa{#1}
  \ifx\@empty\authors
    \ifx\@tempa\@empty
      \gdef\shortauthors{#2}%
    \else
      \gdef\shortauthors{#1}%
    \fi
    \gdef\authors{\author{#2}}%
  \else
    \ifx\@tempa\@empty
      \g@addto@macro\shortauthors{\and#2}%
    \else
      \g@addto@macro\shortauthors{\and#1}%
    \fi
    \g@addto@macro\authors{\and\author{#2}}%
  \fi
}
\renewcommand{\address}[2][]{\g@addto@macro\authors{\address{#1}{#2}}}
\def\@setauthors{%
  \begin{center}%
    \footnotesize
    \vspace{20pt}
    \let\and\@empty
    \def\author##1{\advance\@tempcnta\@ne}%
    \def\address##1##2{\advance\@tempcntb\@ne}%
    \@tempcnta=\z@  \@tempcntb=\z@
    \authors
    \ifnum\@tempcnta>\@ne \ifnum\@tempcntb=\@ne
        \oneaddress
      \else
        \sepaddresses
      \fi
    \else
      \oneaddress
    \fi
  \end{center}%
}
\def\oneaddress{%
  \begingroup
  \let\author\@iden \let\address\@gobbletwo
  \renewcommand{\andify}{%
    \nxandlist{\unskip, }{\unskip{} and~}{\unskip, and~}}%
  \uppercasenonmath\authors
  \andify\authors
  \authors
  \endgroup
  \begingroup \let\and\relax \let\author\@gobble
  \def\address##1##2{\unskip\\[10pt] \itshape##2}%
  \authors
  \endgroup
}
\def\sepaddresses{%
  \begingroup
    \baselineskip10\p@\relax
    \def\address##1##2{ ({\itshape##2}\/)}
    \def\author##1{\def\temp{##1}\leavevmode\uppercasenonmath\temp\temp}%
    \nxandlist
      {,\\[\baselineskip]}
      {\\[\baselineskip] \textsc{\lowercase{and}}\\[\baselineskip]}
      {,\\[\baselineskip]\textsc{\lowercase{and}}\\[\baselineskip]}
      \authors % macro to operate on
    \authors
  \endgroup
}
\def\maketitle{\par
  \@topnum\z@
  \@setcopyright
  \thispagestyle{firstpage}%
  \uppercasenonmath\shorttitle
  \ifx\@empty\shortauthors \let\shortauthors\shorttitle
  \else
    \newcommand{\@xuppercasenonmath}[1]{\toks@\@emptytoks
      \@xp\@skipmath\@xp\@empty##1$$%
      \edef##1{\@nx\protect\@nx\@upprep\the\toks@}}%
    \@xuppercasenonmath\shortauthors
    \def\@@and{AND}
    \renewcommand{\andify}{%
      \nxandlist{\unskip, }{\unskip{ }\@@and{ }}{\unskip, \@@and{ }}}%
    \andify\shortauthors
  \fi
  \@maketitle@hook
  \begingroup
  \@maketitle
  \endgroup
  \c@footnote\z@
  \@cleartopmattertags
}
\def\@maketitle{%
  \normalfont\normalsize
  \let\@makefntext\noindent
  \@adminfootnotes
  \ifx\@empty\addresses\else \@footnotetext{\@setotheraddresses}\fi
  \global\topskip68\p@\relax
  \@settitle
  \ifx\@empty\authors \else \@setauthors \fi
  \ifx\@empty\@dedicatory
  \else
    \baselineskip26\p@
    \vtop{\centering{\footnotesize\itshape\@dedicatory\@@par}%
      \global\dimen@i\prevdepth}\prevdepth\dimen@i
  \fi
  \toks@\@xp{\shortauthors}\@temptokena\@xp{\shorttitle}%
  \edef\@tempa{\@nx\markboth{\the\toks@}{\the\@temptokena}}\@tempa
  \@setabstract
  \normalsize
  \if@titlepage
    \newpage
  \else
    \dimen@34\p@ \advance\dimen@-\baselineskip
    \vskip\dimen@\relax
  \fi
} % end \@maketitle
\renewcommand{\thanks}[1]{%
  \ifx\@empty\thankses
    \gdef\thankses{\thanks{#1}}%
  \else
    \g@addto@macro\thankses{\endgraf\thanks{#1}}%
  \fi}
\def\@setthanks{\def\thanks##1{\noindent##1\@addpunct.}\thankses}
\renewcommand{\curraddr}[2][]{%
  \ifx\@empty\addresses
    \gdef\addresses{\curraddr{#1}{#2}}%
  \else
    \g@addto@macro\addresses{\endgraf\curraddr{#1}{#2}}%
  \fi}
\renewcommand{\email}[2][]{%
  \ifx\@empty\addresses
    \gdef\addresses{\email{#1}{#2}}%
  \else
    \g@addto@macro\addresses{\endgraf\email{#1}{#2}}%
  \fi}
\renewcommand{\urladdr}[2][]{%
  \ifx\@empty\addresses
    \gdef\addresses{\urladdr{#1}{#2}}%
  \else
    \g@addto@macro\addresses{\endgraf\urladdr{#1}{#2}}%
  \fi}
\def\@setotheraddresses{%
  \def\curraddr##1##2{\noindent
    \emph{Current address\@ifnotempty{##1}{ of ##1}}:\space
      ##2\@addpunct.}%
  \def\email##1##2{\noindent
    \emph{E-mail addresses\@ifnotempty{##1}{ of ##1}}:\space
      \texttt{##2}}%
  \def\urladdr##1##2{\noindent
    \emph{WWW address\@ifnotempty{##1}{ of ##1}}:\space
      \texttt{##2}}%
  \addresses
}
\let\enddoc@text\relax
\title{Average crosscap number of a 2-bridge knot}
\author{ Moshe Cohen, Thomas Kindred, Adam M. Lowrance, \\Patrick D. Shanahan, and Cornelia A. Van Cott }
\email{cohenm@newpaltz.edu, tkindred@smith.edu, adlowrance@vassar.edu,\\ pshanahan@lmu.edu, cvancott@usfca.edu}
\begin{document}

\begin{abstract} We determine a simple condition on a particular state graph of an alternating knot or link diagram that characterizes when the unoriented genus and crosscap number coincide, extending work of Adams and Kindred.  Building on this same work and using continued fraction expansions, we provide a new formula for the unoriented genus of a 2-bridge knot or link.    We use recursion to obtain exact formulas for the average unoriented genus $\overline{\Gamma}(c)$ and average crosscap number $\overline{\gamma}(c)$ of all 2-bridge knots with crossing number $c$, and in particular we show that $\lim_{c\to\infty} \left(\frac{c}{3}+\frac{1}{9} - \overline{\Gamma}(c)\right) = \lim_{c\to\infty} \left(\frac{c}{3}+\frac{1}{9} - \overline{\gamma}(c)\right) = 0$.
\end{abstract}
\maketitle

\section{Introduction}

A spanning surface $F$ for a knot (or link) $L\subset S^3$ is a connected, compact surface with no closed components embedded in $S^3$ whose boundary is $L$.  Its {\it complexity} is $\beta_1(F)=\text{rank } H_1(F)$. Since $F$ is connected, its complexity is the number of cuts required to reduce $F$ to a disk. Because $S^3$ is orientable, $F$ is nonorientable if and only if $F$ is one-sided. 

%\AL{\sout{We study the simplest spanning surfaces of a given link $L$ under different possible constraints.}} 
To begin we recall that the \textit{classical genus} $g(L)$ of an oriented nonsplit link $L$ in $S^3$ is
\[g(L)=\min\{g(F)~|~F\text{ is an oriented surface spanning }L\},\]
where $g(F)$ is the genus of the surface $F$. By considering nonorientable spanning surfaces, Clark \cite{Clark_1978} defined the crosscap number and unoriented genus of a link, though he refers to the unoriented genus as the maximal Euler characteristic. 
 \begin{definition}\label{D:CC}
 The \textit{unoriented genus} $\Gamma(L)$ and \textit{crosscap number} $\gamma(L)$ of a link $L\subset S^3$ are
  \begin{align*}
  \Gamma(L)&=\min\{\beta_1(F)~|~F\text{ is a spanning surface for }L\}~\text{and}\\
 \gamma(L)&=\min\{\beta_1(F)~|~F\text{ is a nonorientable spanning surface for }L\}.
 \end{align*}
\end{definition}

If $F$ is an orientable spanning surface of $L$, then $\beta_1(F) = 2g(F) + |L| -1$ where $|L|$ is the number of components of $L$. Given any orientable surface $F\subset S^3$ spanning $L$, we can always obtain a nonorientable spanning surface via $F\natural\includegraphics[width=.5cm]{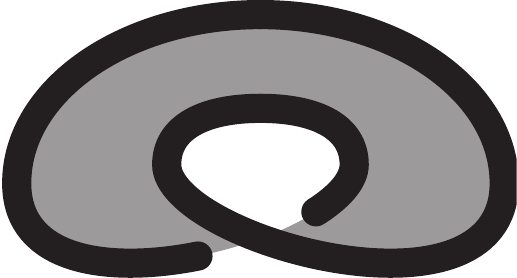}$,  where $\beta_1(F\natural \includegraphics[width=.5cm]{Figures/MobPos.pdf})=\beta_1(F)+1\geq2g(L)+|L|$. This implies the following inequality, due to Clark \cite{Clark_1978}:
\begin{equation*}
\label{ineq:CrosscapGenus}
\gamma(L) \leq 2g(L) + |L|.
\end{equation*}
The same reasoning implies that $\gamma(L)\in\{\Gamma(L),\Gamma(L)+1\}$ for any link $L$.

Haken \cite{Haken} and Schubert \cite{Schubert:genus} showed how to compute the classical genus of any knot using normal surface theory. Also, Ozsv\'ath and Szab\'o \cite{OS:genus} proved that the knot Floer homology of a knot determines its genus. If $L$ is alternating, then Murasugi \cite{mur58} and Crowell \cite{crowell} proved that applying Seifert's algorithm to an alternating diagram of $L$ results in a minimum genus Seifert surface (see also Gabai \cite{gab86a} and Kindred \cite{Kindred2024}).

The crosscap number is known for several infinite families of knots and links. Teragaito \cite{Teragaito_2004} computed the crosscap number of torus knots. Ichihara and Mizushima \cite{IM:2005} found the crosscap number of most pretzel knots. Several other groups of authors found various ways to compute the crosscap number of 2-bridge knots. We discuss these methods in more detail below. More generally, Jaco, Rubinstein, Spreer, and Tillman \cite{JRST:CC} improved an algorithm of Burton and Ozlen \cite{BO:crosscap} that uses normal surface theory to compute the crosscap number of an arbitrary knot. In a different direction, Kalfagianni and Lee \cite{Kalfagianni_Lee:2016} gave two-sided linear bounds on the crosscap number of alternating links in terms of the second and penultimate coefficients of the Jones polynomial. McConkey \cite{McConkey} found similar bounds for links obtained as Conway sums of strongly alternating tangles.

There are several related procedures for finding the unoriented genus and crosscap number of an alternating link, all using (Kauffman) state surfaces in some fashion. A \textit{Kauffman state} of a link diagram is the collection of planar curves obtained by resolving each crossing  $\tikz[baseline=.6ex, scale = .4]{
\draw (0,0) -- (1,1);
\draw (1,0) -- (.7,.3);
\draw (.3,.7) -- (0,1);
}$ as an $A$-resolution  $\tikz[baseline=.6ex, scale = .4]{
\draw[rounded corners = 1mm] (0,0) -- (.45,.5) -- (0,1);
\draw[rounded corners = 1mm] (1,0) -- (.55,.5) -- (1,1);
}$ or a $B$-resolution $\tikz[baseline=.6ex, scale = .4]{
\draw[rounded corners = 1mm] (0,0) -- (.5,.45) -- (1,0);
\draw[rounded corners = 1mm] (0,1) -- (.5,.55) -- (1,1);
}$.  The \textit{state surface} associated with a Kauffman state $x$ is the surface constructed by filling in components of $x$ with disks and replacing crossings with twisted bands. See Figure~\ref{Fi:Plumbing} for an example of a Kauffman state and its associated state surface.  Adams and Kindred \cite{Adams-Kindred:2013} proved if $D$ is an alternating diagram of $L$ with $c>0$ crossings, then there are state surfaces $F$ and $F'$ (not necessarily distinct) such that $F$ is non-orientable, $\beta_1(F) = \gamma(L)$, and $\beta_1(F')=\Gamma(L)$. Therefore one can compute the crosscap number and unoriented genus of a nonsplit alternating link by examining the $2^c$ state surfaces of its alternating diagram.

Building on work of various combinations of Adams, Kindred, Ito, Takimura, and Yamada  \cite{Adams-Kindred:2013,Kindred:2019,Ito-Takimura:2020,Yamada_Ito_2021}, Kindred gave an algorithm that computes the crosscap number of an alternating knot and implemented it through 13 crossings  \cite{Kindred:2019}, while Yamada and Ito gave another such algorithm which runs in $O(c^3)$ \cite{Yamada_Ito_2023}.  Ito and Takimura \cite{Ito_Takimura_2018, Ito_Takimura_2022} classified alternating diagrams whose crosscap number is two or three.

Given a $c$-crossing alternating diagram $D$ of a link $L$, Adams and Kindred also described a procedure that produces a state surface $F$ of $D$ which realizes $\Gamma(L)$, but possibly not $\gamma(L)$, and that does not require examining all $2^c$ Kauffman states (see Procedure~\ref{Proc:AK Alt} and \cite[Theorem 3.3]{Adams-Kindred:2013}).
Our first theorem determines the crosscap number $\gamma(L)$ from this surface $F$ obtained by this procedure of Adams and Kindred.

The \textit{state graph} $G_x$ of the Kauffman state $x$ of the link diagram $D$ is the graph whose vertices and edges correspond to the components of the Kauffman state $x$ and the crossings of $D$, respectively, where an edge is incident to a vertex in $G_x$ if the corresponding crossing is incident to the corresponding component of the Kauffman state. We prove:

\begin{theorem}\label{T:CC}
Let $D$ be an alternating diagram of a nontrivial link $L\subset S^3$ with no Hopf link connect summand, and let $F_x$ be a state surface from $D$ with $\beta_1(F_x)=\Gamma(L)$. Let $G_x$ be its state graph. 
Then $\gamma(L)=\Gamma(L)+1$ if and only if every cycle in $G_x$ has an even length of at least 4.
\end{theorem}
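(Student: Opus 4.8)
The plan is to recast the statement as two purely combinatorial facts about $G_x$. First, a state surface is orientable if and only if its state graph is bipartite (has no odd cycle): each crossing band carries a single half-twist, so transporting an orientation of a disk across a band reverses it, and a coherent global orientation exists exactly when the disks admit a proper $2$-coloring in which band-adjacent disks get opposite colors. Second, minimality of $\beta_1(F_x)=\Gamma(L)$ forces $G_x$ to be loop-free: re-resolving a loop edge splits its state circle, raising the circle count by one and hence lowering $\beta_1=c-|V(G_x)|+1$ below $\Gamma(L)$, which is impossible. Granting these, the hypothesis ``every cycle has even length at least $4$'' is equivalent to ``$G_x$ is bipartite and bigon-free,'' and the theorem splits according to the two ways this can fail: an odd cycle, or a bigon (two edges sharing both endpoints).

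For the forward direction I argue contrapositively, showing that an odd cycle or a bigon yields $\gamma(L)=\Gamma(L)$. An odd cycle makes $F_x$ itself nonorientable, so $\gamma(L)\le\beta_1(F_x)=\Gamma(L)$, and equality follows since $\gamma(L)\ge\Gamma(L)$ always. A bigon is a pair of crossings $e_1,e_2$ joining the same two state circles $u,v$; re-resolving $e_1$ merges $u$ and $v$ and re-resolving $e_2$ (now a self-crossing) splits the result, so the new state $y$ has the same circle count and $\beta_1(F_y)=\Gamma(L)$. Because $u$ and $v$ also meet the rest of the diagram, this re-resolution trades the annulus spanned by the two parallel half-twisted bands for a Möbius band, so $F_y$ is nonorientable and $\gamma(L)=\Gamma(L)$. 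This is precisely where the no-Hopf-summand hypothesis enters: for an isolated Hopf clasp the same move returns a second annulus rather than a Möbius band, consistent with the Hopf link having $\gamma=\Gamma+1$ despite a bigon in its state graph.

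For the reverse direction I must show that when $G_x$ is bipartite and bigon-free, no nonorientable spanning surface attains $\beta_1=\Gamma(L)$; by the theorem of Adams and Kindred it suffices to exclude nonorientable state surfaces $F_y$ with $\beta_1(F_y)=\Gamma(L)$, i.e.\ with the same number of state circles as $F_x$. The engine is that re-resolving a crossing changes the circle count by exactly $\pm1$, so any circle-count-preserving passage must balance ``merges'' against ``splits,'' and a split occurs only when a crossing has become a self-crossing, that is, only after its endpoints have been identified by previous merges. The minimal configuration forcing such an identification is a cycle of $G_x$, and re-resolving all edges of a cycle of length $\ell$ changes the circle count by $2-\ell$, which vanishes only for $\ell=2$. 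More generally, writing $S$ for the set of crossings where $y$ differs from $x$, the number of splits is controlled by the cycle rank of the spanning subgraph $G_x[S]$, so the balance splits $=|S|/2$ needed for $\beta_1(F_y)=\Gamma(L)$ demands an abundance of short cycles; a girth of at least $4$ starves this supply, forcing a bigon to have been present.

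The main obstacle is exactly this last rigidity step, since a competing minimal state $y$ may differ from $x$ in many crossings at once, so I cannot simply invoke a single bigon move. Making the argument rigorous will require two things: pinning down the inequality (splits $\le$ cycle rank of $G_x[S]$) together with its planar/alternating refinements, and transferring the \emph{nonorientability} of $F_y$ (an odd cycle in $G_y$) back to an odd cycle or bigon in $G_x$ rather than merely matching Betti numbers. I expect to handle this by proving a connectivity statement for circle-maximal states of an alternating diagram under bigon moves, so that a bipartite, bigon-free $G_x$ admits no such move and is therefore isolated among minimal surfaces; the alternating hypothesis and the absence of a Hopf summand should be invoked once more to control the local picture at each re-resolution and to keep the intermediate state graphs loop-free, so that the parity bookkeeping above applies verbatim.
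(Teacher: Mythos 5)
Your forward direction is essentially the paper's own argument (Lemma~\ref{L:HopfBand}): an odd cycle makes $F_x$ one-sided, and a length-$2$ cycle lets you re-resolve the two parallel crossing bands to obtain an equal-complexity state surface that is one-sided, with the no-Hopf-summand hypothesis used to guarantee that cutting both bands leaves $F_x$ connected, so that some simple closed curve passes through exactly one band and its framing changes by $\pm\frac12$. That half is correct, if slightly loosely phrased (``$u$ and $v$ also meet the rest of the diagram'' should be ``cutting both bands leaves $F_x$ connected'').

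The genuine gap is in the reverse direction. To conclude $\gamma(L)=\Gamma(L)+1$ from Adams--Kindred (Theorem~\ref{T:AK61}) you must show that \emph{no} minimal-complexity state surface of $D$ is nonorientable, and your plan is to rule out any competing state $y$ with $|y|=|x|$ by bounding the number of ``splits'' by the cycle rank of $G_x[S]$, where $S$ is the set of crossings at which $y$ differs from $x$. You never prove that inequality, and as stated it is problematic: whether a given re-resolution is a merge or a split depends on the order in which the crossings of $S$ are switched, and splits can occur at crossings whose endpoints become identified only through paths that are not confined to cycles of $G_x[S]$; you also defer the transfer of nonorientability from $G_y$ back to $G_x$ to an unproven ``connectivity under bigon moves'' statement. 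The paper closes this direction with a different, self-contained uniqueness lemma (Lemma~\ref{L:UseDual}): induct on the number of crossings; if a competing minimal state $y$ agrees with $x$ at some crossing, resolve that crossing and invoke the inductive hypothesis; otherwise $x$ and $y$ are dual states, so each state circle of $y$ corresponds to a cycle of $G_x$ and meets at least four crossings, forcing $|y|\le n/2$, which contradicts the checkerboard-state count $|b|+|w|=n+2|D|$ together with $|b|,|w|\le|y|$. Until you either establish your combinatorial inequality rigorously or replace it with an argument of this kind, the reverse implication is not proved.
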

Because adding a Hopf link connect summand increases the crosscap number of a link by one, Theorem~\ref{T:CC} can be used to find the crosscap number of any alternating link.

Every nonzero rational number $\frac{p}{q}$ with $-1<\frac{p}{q} < 1$ can be represented as a continued fraction
\begin{equation}
\label{eq:contfrac}
    \frac{p}{q}=\cfrac{1}{a_1+\cfrac{1}{a_2+\cfrac{1}{\ddots +\cfrac{1}{a_k}}}},
\end{equation}

or in more compact notation $\frac{p}{q} = [a_1,\dots,a_k]$, where each $a_i$ is a nonzero integer. The $2$-bridge link $L(p/q)$ is defined to be the link with a $4$-plat diagram as in Figure~\ref{Fi:fourplat} where a box labeled $a_i$ indicates a twist region with $|a_i|$ crossings, and the choice of right- or left-handed crossing is determined by the sign of $a_i$, as in the table. If $0< p/q<1$, then $p/q$ can be represented by a continued fraction of the form $[b_1,-b_2,b_3,-b_4,\dots, (-1)^{k-1} b_k]$ where $b_i\geq 2$. We call this representation the \textit{positive subtractive form} of $p/q$ and abbreviate it %$p/q = [b_1,b_2,\dots,b_k]_-$
$p/q = [b_1,\dots,b_k]_-$. 

Schubert \cite{Schubert_1954} proved that the 2-bridge links $L(\frac{p}{q})$ and $L(\frac{p-q}{q})$ are equivalent, and $\frac{p}{q}$ or $\frac{p-q}{q}$ can be represented as a continued fraction $[e_1,-e_2,e_3,-e_4,\dots,(-1)^m e_m]_+$ where each $e_i$ is nonzero and even.  We call this representation an \textit{even subtractive form} of $p/q$ and abbreviate it %$[e_1,e_2,\dots,e_m]_-$
$[e_1,\hdots,e_m]_-$.
The link $L(p/q)$ is a knot precisely when $q$ is odd. In that case, %we often denote $L(p/q)$ by $K(p/q)$ because it is a knot, and 
the even subtractive form of $p/q$ is unique. See Section~\ref{sec:background} for more discussion of continued fractions representing 2-bridge knots and links.

Hatcher and Thurston \cite{Hatcher-Thurston:1985} found all incompressible spanning surfaces of $2$-bridge links. Hirasawa and Teragaito \cite{Hirasawa_Teragaito:2006} gave an algorithm to compute the crosscap number of the 2-bridge knot $K(p/q)$ in terms of the minimum length of a continued fraction expansion of $p/q$ subject to certain constraints (throughout the paper we use $K$ when referring to knots alone and use $L$ when referring to knots or links). Hoste, Shanahan, and Van Cott \cite{Hoste-Shanahan-VanCott:2021} computed the crosscap number of the 2-bridge knot $K(p/q)$ in terms of the depth of the rational number $p/q$ in the Farey graph. Since every 2-bridge link is alternating, we can use Theorem~\ref{T:CC} to find a new formula for the unoriented genus and crosscap number of a $2$-bridge link.

\begin{theorem}
    \label{thm:cc_formula}
    Let $L(p/q)$ be a 2-bridge link where $0<p/q<1$, the fraction $p/q$ has positive subtractive continued fraction $[b_1,\dots,b_k]_-$, and $p/q$ has even subtractive continued fraction $[e_1,%e_2,
    \dots,e_\ell]_-$.  Define
    \begin{itemize}
        \item $w=\#\{i~:b_i=2\text{ and either }i=1\text{ or }b_{i-1}\neq2\}+\#\{i:~b_i\geq3\}$ and 
        \item $z=\#\{i:~\text{for some }j\geq 0,~b_i=3=b_{i+1}=\cdots=b_{i+j}\text{ and }b_{i-1}=2=b_{i+j+1}\}$. 
    \end{itemize}
    Then the unoriented genus of $L(p/q)$ is $\Gamma(L(p/q)) = w-z$, and the crosscap number of $L(p/q)$ is
    \[\gamma(L(p/q)) = \begin{cases} 
    \Gamma(L(p/q)) & \text{if $|e_i|=2$ for some $i$,}\\
    \Gamma(L(p/q))+1 & \text{if $|e_i|\geq 4$ for all $i$.}
    \end{cases}\]
\end{theorem}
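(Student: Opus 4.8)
The plan is to extract both invariants from a single explicit surface. Starting from the reduced alternating diagram $D$ of $L(p/q)$ encoded by the positive subtractive (Hirzebruch--Jung) form $[b_1,\dots,b_k]_-$, I would run the Adams--Kindred procedure (Procedure~\ref{Proc:AK Alt}) to produce a state $x$ whose state surface $F_x$ satisfies $\beta_1(F_x)=\Gamma(L(p/q))$, and then read off $\gamma$ from the state graph $G_x$ via Theorem~\ref{T:CC}. Since $F_x$ is a connected disk-band surface, $\beta_1(F_x)$ equals its number of bands (one per crossing of $D$, hence fixed) minus its number of disks (the state circles) plus one, so the heart of Part~1 is simply to track, along the continued fraction, how many state circles the procedure creates.

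First I would prove $\Gamma(L(p/q))=w-z$. The twist regions of $D$ are arranged in a linear chain whose structure is governed by the entries $b_i$, and the Adams--Kindred procedure specifies, region by region, whether to take the resolution that fuses the two strands into a single band or the one that spawns a nested chain of bigon circles. Carrying this out, I expect the resulting count to group exactly as $w$ prescribes: each entry $b_i\ge 3$ and each maximal block of consecutive $2$'s contributes one unit to $\beta_1(F_x)$. The correction $-z$ should appear because a maximal run of $3$'s bordered on both sides by $2$'s admits a local simplification that merges it with a neighbor, lowering the count by one. The crux of this step is to verify that these are the only interactions between adjacent regions and that the tally is exactly $w-z$, i.e.\ that no further reduction of $F_x$ is available.

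Next I would compute $\gamma$ by applying Theorem~\ref{T:CC} to $F_x$. Its hypotheses hold because $L(p/q)$ is alternating and, being $2$-bridge, prime, so it has no Hopf link connect summand---except the Hopf link itself ($p/q=1/2$), which I would dispatch separately as the degenerate exception (there the criterion of Theorem~\ref{T:CC} does not apply, and one checks directly that $\gamma=\Gamma+1$). The substantive claim is that every cycle of $G_x$ has even length at least $4$ if and only if $|e_i|\ge 4$ for all $i$; contrapositively, $G_x$ has an odd cycle or a length-$2$ cycle if and only if some $|e_i|=2$. Here all cycles of $G_x$ being even means $G_x$ is bipartite, which corresponds to $F_x$ being two-sided, so an odd cycle records that $F_x$ is one-sided, while a length-$2$ cycle records a pair of state circles joined by two bands. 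I would establish the equivalence by expressing both the cycle structure of $G_x$ (visible in the positive subtractive form) and the occurrences of $|e_i|=2$ (visible in the even subtractive form) in terms of the common fraction $p/q$, using the standard conversion between the two subtractive continued fractions.

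I expect the main obstacle to be precisely this translation in Part~2: matching the combinatorics of $G_x$, built from the positive subtractive diagram together with the Adams--Kindred resolutions, to the entries of the even subtractive expansion, and confirming that controlling a spanning set of cycles controls every cycle (so that the bipartite-and-no-doubled-edge condition can be checked locally rather than over all cycles). A secondary point requiring care is that for links ($q$ even) the even subtractive form need not be unique; I must confirm that the dichotomy ``$|e_i|=2$ for some $i$'' is independent of the admissible even form chosen, so that the statement is well posed, reconciling this with the knot-versus-link distinction noted after the definition of the even subtractive form.
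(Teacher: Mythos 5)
Your Part~1 is, in outline, the paper's own argument: the ``region by region'' bookkeeping you describe is formalized there as Lemma~\ref{L:Rule1} (stripping a final entry $b_k\geq 3$ drops $\Gamma$ by one) and Lemma~\ref{L:Rule3} (stripping a trailing block of $2$'s and decrementing the preceding entry drops $\Gamma$ by one), followed by a strong induction on $w$ whose case analysis produces the $-z$ correction exactly for runs of $3$'s flanked by $2$'s (Theorem~\ref{T:2BridgeGamma}). What your sketch omits is only that (routine but necessary) case split.

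Part~2 is where you diverge, and where the genuine gap sits. You propose to apply the cycle criterion of Theorem~\ref{T:CC} to the state graph of the Adams--Kindred surface built from the \emph{positive} subtractive form, and then to translate that graph's cycle structure into the entries $e_i$ of the even form. That translation is the entire content of the step; you correctly flag it as the main obstacle and do not supply it, and it is harder than necessary because the Adams--Kindred state graph depends intricately on the pattern of $2$'s and $3$'s in $[b_1,\dots,b_k]_-$. The idea you are missing is to run Lemmas~\ref{L:UseDual} and~\ref{L:HopfBand} on a \emph{different} state surface of the same diagram, namely the Hatcher--Thurston surface of the all-even form $[e_1,\dots,e_\ell]_-$: its state graph is by construction a chain of cycles of lengths $|e_1|,\dots,|e_\ell|$ meeting in cut vertices, so ``every cycle has even length at least $4$'' is literally the condition ``$|e_i|\geq 4$ for all $i$,'' with no translation required. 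The cost is that one must justify applying the criterion to a surface not known in advance to be minimal-complexity; the paper pays it with Lemma~\ref{L:UseDual} in one direction (if all $|e_i|\geq 4$, this surface is the \emph{unique} minimal-complexity state surface and is $2$-sided, so $\gamma=\Gamma+1$) and with orientability in the other (if $\gamma=\Gamma+1$, every minimal-complexity state surface is orientable, hence is this even-form surface, and Lemma~\ref{L:HopfBand} forces all $|e_i|\geq 4$). Your two side remarks --- that the Hopf link must be excluded and treated by hand, and that for two-component links the dichotomy must be shown independent of the choice of even form --- are both correct; the paper handles them in Theorem~\ref{T:2BridgeCCNew} by assuming more than two crossings and by fixing the minimal-length even form.
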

In Theorem~\ref{thm:cc_formula}, the first quantity in $w$ is the number of maximal substrings of $2$'s in $[b_1,\dots,b_k]_-$ and the second quantity is the number of entries in $[b_1,\dots,b_k]_-$ that are at least 3. The quantity $z$ is the number of substrings of length at least three in $[b_1,\dots,b_k]_-$ that start and end with $2$ and where every other entry is $3$. Also as discussed in Section~\ref{sec:background}, every 2-bridge knot corresponds to a rational number $p/q$ satisfying the assumptions of Theorem~\ref{thm:cc_formula}.

A growing body of work investigates the probabilistic properties of different invariants of the set of $2$-bridge knots with fixed crossing number. Motivated by computational data of Dunfield et.~al \cite{Dunfield:2014}, Cohen \cite{Coh:lower} found a lower bound on the average genus $\overline{g}_c$ of 2-bridge knots with a fixed crossing number $c$. Suzuki and Tran \cite{SuzukiTran} and independently Cohen and Lowrance \cite{CohLow} computed the average genus $\overline{g}_c$ exactly (see also Ray and Diao \cite{RayDiao}). Cohen et.~al \cite{CohLowURSI} found the variance, median, and mode of the genera of $2$-bridge knots with fixed crossing number and proved that this probability distribution is asymptotically normal. Suzuki and Tran \cite{SuzukiTran:Braid} computed the average braid index of $2$-bridge knots with a fixed crossing number. Clark, Frank, and Lowrance \cite{CFL:Braid} independently computed the average, variance, and mode of the braid indices of $2$-bridge knots with a fixed crossing number and gave a formula for the number of $2$-bridge knots with a fixed braid index and crossing number. Baader, Kjuchukova, Lewark, Misev, and Ray \cite{BKLMR} proved that the average $4$-genus of $2$-bridge knots is sublinear with respect to a parameter that approximates the crossing number. Cohen, Lowrance, Madras, and Raanes \cite{CLMR} computed the average absolute value of the signature of $2$-bridge knots with a fixed crossing number and found explicit sublinear lower and upper bounds on the $4$-genus of $2$-bridge knots with a fixed crossing number.

Let $\mathcal{K}_c$ be the set of $2$-bridge knots with crossing number $c$ where if a knot $K$ is chiral of crossing number $c$, both $K$ and its mirror $\overline{K}$ are elements of $\mathcal{K}_c$. For example, $\mathcal{K}_3$ contains both the left- and right-handed trefoils, and $\mathcal{K}_4$ contains only the figure-eight knot. Ernst and Sumners \cite{Ernst-Sumners:1987} first computed $|\mathcal{K}_c|$, the number of 2-bridge knots with crossing number $c$ (see Theorem~\ref{T:Ernst-Sumners}). Define the \textit{average unoriented genus} $\overline{\Gamma}(c)$ and the \textit{average crosscap number} $\overline{\gamma}(c)$ of the set of 2-bridge knots with crossing number $c$ by
\[\overline{\Gamma}(c) = \frac{\sum_{K\in\mathcal{K}_c} \Gamma(K)}{|\mathcal{K}_c|}~\text{and}~\overline{\gamma}(c) = \frac{\sum_{K\in\mathcal{K}_c} \gamma(K)}{|\mathcal{K}_c|},\]
respectively. In our next theorem, we compute the average unoriented genus and average crosscap number of $2$-bridge knots with a fixed crossing number.
\begin{theorem}
\label{T:ccave}
The average unoriented genus $\overline{\Gamma}(c)$ and the average crosscap number $\overline{\gamma}(c)$ of the set of $2$-bridge knots with crossing number $c$ satisfy
\[\overline{\Gamma}(c)=\frac{c}{3}+\frac{1}{9} +\varepsilon_1(c)~\text{and}~\overline{\gamma}(c)= \overline \Gamma(c) + \varepsilon_2(c),\]
where $\varepsilon_1(c)$ and $\varepsilon_2(c)$ approach $0$ as $c$ approaches infinity.
\end{theorem}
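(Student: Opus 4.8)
The plan is to compute the average $\overline{\Gamma}(c)$ via a recursion on the positive subtractive continued fraction data, using Theorem~\ref{thm:cc_formula} to convert the genus into a combinatorial count over the strings $[b_1,\dots,b_k]_-$ with $b_i\geq 2$. The first step is to set up a bijection (or near-bijection) between the $2$-bridge knots in $\mathcal{K}_c$ and the admissible positive subtractive strings of a fixed total crossing number $c$. Recall that $c = \sum_{i} c(b_i)$ where each entry $b_i$ contributes a known number of crossings to the alternating diagram, and that Schubert's classification identifies $L(p/q)$ with $L(p/q')$ whenever $qq'\equiv \pm 1 \pmod p$; I would account for this palindromic identification (a string and its reverse give the same knot) together with the chirality convention stated before Theorem~\ref{T:ccave}, so that the count $|\mathcal{K}_c|$ matches the Ernst--Sumners formula (Theorem~\ref{T:Ernst-Sumners}).

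**Next I would** express the numerator $\sum_{K\in\mathcal{K}_c}\Gamma(K)$ as a sum over strings of the statistic $w-z$ from Theorem~\ref{thm:cc_formula}. Since $w$ and $z$ are both \emph{local} statistics --- $w$ counts maximal blocks of $2$'s plus entries $\geq 3$, and $z$ counts alternating $2\,3\,3\cdots 3\,2$ patterns --- each is additive over a transfer-matrix-style decomposition of the string. The plan is therefore to build a generating function in a variable tracking total crossing number, with a second marking variable tracking the statistic, and to extract both $\sum \Gamma$ and $|\mathcal{K}_c|$ as coefficient ratios. Because these statistics depend only on bounded-length windows $(b_{i-1},b_i,b_{i+1})$, a finite transfer matrix (or an explicit linear recursion with constant coefficients) captures the entire contribution, and standard singularity/dominant-root analysis of that recursion yields the leading asymptotics. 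I would carry out the Schubert symmetry by the usual device of counting all strings and then correcting for palindromes, whose contribution is asymptotically negligible since the number of palindromic strings grows like the square root of the total.

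**The key computation** is then to show that the dominant eigenvalue analysis produces exactly $\overline{\Gamma}(c) = \tfrac{c}{3}+\tfrac{1}{9}+\varepsilon_1(c)$ with $\varepsilon_1(c)\to 0$. The coefficient $\tfrac13$ of $c$ should emerge as the expected per-crossing increment of $w-z$ in the stationary distribution of the transfer process, and the constant $\tfrac19$ and the error term $\varepsilon_1(c)$ come from the subleading spectral data; the exponentially small corrections from subdominant eigenvalues and from the palindrome correction are absorbed into $\varepsilon_1(c)$. For the crosscap statement $\overline{\gamma}(c)=\overline{\Gamma}(c)+\varepsilon_2(c)$, I would invoke the dichotomy in Theorem~\ref{thm:cc_formula}: $\gamma$ exceeds $\Gamma$ by exactly $1$ precisely on the set of knots whose even subtractive form has all entries of absolute value $\geq 4$. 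Thus $\varepsilon_2(c)$ is exactly the \emph{proportion} of knots in $\mathcal{K}_c$ satisfying this even-form condition, and the goal is to show this proportion tends to $0$. I expect this to follow from showing that the generating count of strings avoiding an entry of absolute value $2$ in the even form grows with a strictly smaller exponential rate than $|\mathcal{K}_c|$.

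**The main obstacle** will be the interaction between the two continued fraction representations: Theorem~\ref{thm:cc_formula} phrases $\Gamma$ in terms of the positive subtractive form $[b_1,\dots,b_k]_-$, but the crosscap correction is phrased in terms of the even subtractive form $[e_1,\dots,e_\ell]_-$, and these two expansions of the same $p/q$ are related by a nontrivial (if classical) conversion. I would need a clean dictionary translating the ``some $|e_i|=2$'' condition into a local condition on the $b$-string, so that \emph{both} the numerator sum and the crosscap-excess count live on the same transfer-matrix state space; establishing that this even-form condition is detectable from bounded windows of the $b$-string --- and then bounding its frequency --- is where the real work lies. A secondary technical point is making the error terms genuinely go to zero uniformly rather than merely being $o(1)$ along subsequences determined by parities of $c$, which requires checking that the dominant eigenvalue is the same across the relevant residue classes of $c$.
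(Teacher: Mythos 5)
Your plan follows essentially the same route as the paper: the paper's bijections $f_1,\dots,f_4$ (and their palindromic analogues $p_1,\dots,p_4$) are exactly the combinatorial content of your transfer-matrix decomposition of the $b$-string, yielding the recursions $W(c)=W(c-1)+2W(c-2)+3\cdot 2^{c-5}$ and its analogue for $Z(c)$ whose solutions give $\tfrac{c}{3}+\tfrac19+\varepsilon_1(c)$, with the palindrome correction and the Ernst--Sumners denominator handled just as you describe. The one place you overcomplicate is your stated ``main obstacle'': no dictionary between the positive subtractive and even subtractive forms is needed, because the paper counts the all-$|e_i|\geq 4$ tuples entirely within the even-form world via their own linear recursion (characteristic polynomial $x^4-x^2-x-1$, dominant root $\approx 1.466<2$), which is precisely the growth-rate comparison you anticipate for $\varepsilon_2(c)$.
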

Theorems~\ref{T:Ep1} and \ref{T:Ep2} imply Theorem \ref{T:ccave} and give exact formulas for $\varepsilon_1(c)$ and $\varepsilon_2(c)$.

We prove Theorem~\ref{T:ccave} by taking advantage of recursive structures in certain sets of 2-bridge knot diagrams. Roughly, we define the set $K(c)$ to be the set of $2$-bridge knot diagrams coming from positive subtractive continued fractions that have crossing number $c$. We find recursive formulas for the totals $W(c)$ and $Z(c)$ of $w$ and $z$ respectively over all diagrams in $K(c)$. These recursive formulas lead to closed formulas which allow us to compute the asymptotics of average unoriented genus using Theorem~\ref{thm:cc_formula}. We similarly use a set of 2-bridge knot diagrams coming from even subtractive continued fractions with crossing number $c$ to count the number of $2$-bridge knots with $c$ crossings where $\gamma(K) = \Gamma(K)+1$, which yields the asymptotic formula for the average crosscap number. {The authors expect that Theorem~\ref{T:ccave} can be extended to 2-bridge links using similar methods.}

The paper is organized as follows. In Section~\ref{sec:background}, we describe background information about 2-bridge knots and state surfaces. In Section~\ref{sec:algorithm}, we prove Theorems~\ref{T:CC} and \ref{thm:cc_formula}, giving a new method to compute the crosscap number of an alternating link and a new formula for the crosscap number of a $2$-bridge knot, respectively. In Section~\ref{sec:unoriented_genus}, we find a formula for the average unoriented genus $\overline{\Gamma}(c)$ of $2$-bridge knots with crossing number $c$. Finally, in Section~\ref{sec:epsilon}, we find the formula for the $\varepsilon_2(c)$ term appearing in Theorem~\ref{T:ccave}.

\textbf{Acknowledgements. } The authors would like to thank the partners of the Joint Mathematics Meetings, where this project began in 2024.  

\section{Background}
\label{sec:background}

{In this section, we review various facts about 2-bridge knots and spanning surfaces, with a focus on spanning surfaces of 2-bridge knots as described by Hatcher and Thurston.}

\subsection{Continued fractions for 2-bridge knots and links}\label{S:CF}

\begin{figure}[h]
\includegraphics[width = 4in]{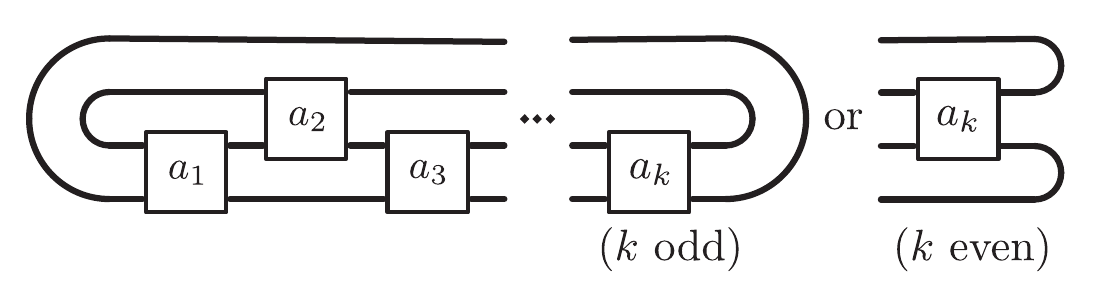}\\
\begin{tabular}{|l|c|c|} \hline
 & $a_i > 0$ &  $a_i < 0$ \\ \hline
$i$ odd &right-hand $\tikz[baseline=.6ex, scale = .4, thick]{
\draw (0,1) -- (1,0);
\draw (0,0) -- (.3,.3);
\draw (.7,.7) -- (1,1);
}$  & left-hand $\tikz[baseline=.6ex, scale = .4, thick]{
\draw (0,0) -- (1,1);
\draw (1,0) -- (.7,.3);
\draw (.3,.7) -- (0,1);
}$ \\ \hline
$i$ even & left-hand $\tikz[baseline=.6ex, scale = .4, thick]{
\draw (0,0) -- (1,1);
\draw (1,0) -- (.7,.3);
\draw (.3,.7) -- (0,1);
}$ & right-hand $\tikz[baseline=.6ex, scale = .4, thick]{
\draw (0,1) -- (1,0);
\draw (0,0) -- (.3,.3);
\draw (.7,.7) -- (1,1);
}$ \\ \hline
\end{tabular}

\caption{A 4-plat diagram of a 2-bridge link.} 
\label{Fi:fourplat}
\end{figure}

\iffalse

\begin{table}[h!]
\centerline{
\begin{tabular}{|l|c|c|} \hline
 & $a_i > 0$ &  $a_i < 0$ \\ \hline
$i$ odd &right-hand $\tikz[baseline=.6ex, scale = .4, thick]{
\draw (0,1) -- (1,0);
\draw (0,0) -- (.3,.3);
\draw (.7,.7) -- (1,1);
}$  & left-hand $\tikz[baseline=.6ex, scale = .4, thick]{
\draw (0,0) -- (1,1);
\draw (1,0) -- (.7,.3);
\draw (.3,.7) -- (0,1);
}$ \\ \hline
$i$ even & left-hand $\tikz[baseline=.6ex, scale = .4, thick]{
\draw (0,0) -- (1,1);
\draw (1,0) -- (.7,.3);
\draw (.3,.7) -- (0,1);
}$ & right-hand $\tikz[baseline=.6ex, scale = .4, thick]{
\draw (0,1) -- (1,0);
\draw (0,0) -- (.3,.3);
\draw (.7,.7) -- (1,1);
}$ \\ \hline
\end{tabular}
}
\bigskip
\caption{The handedness of crossings in a 2-bridge diagram.}
\label{T:crossingsign}
\end{table}

\fi

In general, a  {2-bridge link diagram} $D$ is the plat closure of a 4-braid. Let $\sigma_1,\sigma_2,\sigma_3$ denote the usual generators of the group of 4-braids. This diagram $D$ admits a sequence of flype moves which removes all appearances of $\sigma_3$. If the resulting diagram is reduced (that is, the diagram admits no nugatory crossings), then we can arrange for the braid to begin with a nonzero power of $\sigma_1$.  Thus, every 2-bridge link has a diagram as in Figure~\ref{Fi:fourplat}. 

We denote the link with the diagram in Figure~\ref{Fi:fourplat} by $L[a_1,\hdots,a_k]$. This diagram is alternating if and only if all $a_i$ have the same sign. Kauffman \cite{Kauffman1987}, Murasugi \cite{Murasugi1987}, and Thistlethwaite \cite{Thistlethwaite1987} showed that a reduced alternating link diagram minimizes its crossing number, and since the 4-plat diagram of $L[a_1,\dots,a_k]$ is reduced and alternating when each $a_i$ has the same sign, the crossing number of the link $L[a_1,\dots,a_k]$ is $c=|a_1 +a_2 +\dots +a_k|$.

In general, a vector of integers $( a_1, %a_2, 
\dots, a_k )$ determines a fraction $p/q$ via the \emph{additive} continued fraction $\frac{p}{q}=[a_1, %a_2, 
\dots, a_k]$ as in Equation \eqref{eq:contfrac}.
Then we write $L[a_1,\ldots,a_k]$ to denote the link $L(p/q)$. Schubert \cite{Schubert_1956} showed that when $p/q$ and $p'/q'$ are reduced fractions with $q,q'>0$, the links $L(p/q)$ and $L(p'/q')$ are ambiently isotopic if and only if $q = q'$ and $p \equiv p' \Mod{q}$ or $pp' \equiv 1 \Mod{q}$. Hence, excluding the case of the trivial 2-component link $L(0)$, we may always assume when writing $L(p/q)$ that $0<p<q$. 
Also, it turns out that $L(p/q)$ is a knot if and only if $q$ odd; otherwise it is a 2-component link. 
See \cite{BS:2010,BZ:2003} for a proof. 

A reduced fraction $p/q$ with $0< p < q$ can be given by many different additive continued fractions, but
the division algorithm and the fact that \[
[a_1,\dots, a_{k-1},a_k,1] = [a_1, \dots, a_{k-1},a_k+1]
\]imply  that there is a unique odd-length representation $p/q = [a_1,\dots, a_{2k+1}]$ in which every $a_i >0$. Call this the \textit{positive additive} continued fraction representation of $p/q$, and denote it by $[a_1,\dots, a_{2k+1}]_+$. From Figure~\ref{Fi:fourplat}, it is clear that reflecting the diagram for $L[a_1,\dots, a_{2k+1}]_+$ in the plane of the diagram gives $L[-a_1,\dots, -a_k]$. Moreover, by rotating the diagram for $L[a_1,\dots, a_{2k+1}]_+$ by 180$^\circ$ and then applying flypes to change $\sigma_3$ crossings into $\sigma_1$ crossings, we see that $L[a_1,\dots, a_{2k+1}]_+$ is equivalent to $L[a_{2k+1},\dots, a_1]_+$. This observation can also be derived algebraically by verifying that if $[a_1,\dots, a_{2k+1}]_+ = p/q$ and $ [a_{2k+1},\dots, a_1]_+ = p'/q'$, then $q = q'$ and $pp' \equiv 1 \Mod{q}$ (see \cite{BZ:2003}).

There are also \textit{subtractive} ways to represent ${p/q}$ as a continued fraction:

\[\frac{p}{q}=\cfrac{1}{b_1-\cfrac{1}{b_2-\cfrac{1}{\ddots -\cfrac{1}{b_\ell}}}}.
\]

We abbreviate this using the notation $p/q=[b_1,\hdots,b_\ell]_-$. 
In particular, we can convert a positive additive continued fraction $[a_1,\dots, a_{2k+1}]_+=p/q$ to a subtractive continued fraction $[b_1,\hdots,b_\ell]_-=p/q$ where each $b_i\geq 2$, which we call the \textit{positive subtractive} continued fraction representation of $p/q$.  To obtain $[b_1,\hdots,b_\ell]_-$ from $[a_1,\dots, a_{2k+1}]_+$, replace $a_1$ and $a_{2k+1}$ with $a_1+1$ and $a_{2k+1}+1$, replace each remaining odd-index coefficient $a_{2i+1}$ with $a_{2i+1}+2$, and replace each
even-index coefficient $a_{2i}$ with a string of ($a_{2i}-1$) 2's (possibly none).
In simpler notation we have
\[
[a_1,%a_2,
\dots, a_{2k+1}]_+ = [a_1+1, 2^{[a_2 -1]},a_3+2, 2^{[a_4-1]}, \dots, a_{2k-1}+2, 2^{[a_{2k}-1]},a_{2k+1}+1]_-
\]
where $2^{[n]}$ represents a string of $n$ 2's (including an empty string if $n=0$). The coefficients of this subtractive form can be read from the 4-plat diagram $D$ for the additive positive continued fraction by taking the checkerboard surface of $D$ in which the unbounded region is unshaded and counting the number of crossings around each bounded unshaded region. Note here that $D$ is alternating. See Figure~\ref{subtractiveform}.

\begin{figure}[h]
\labellist\tiny
\pinlabel {4} at 270 115
\pinlabel {4} at 650 115
\pinlabel {2} at 870 115
\pinlabel {2} at 980 115
\pinlabel {3} at 1195 115
\pinlabel {$\sigma_1^3$} at 270 -30
\pinlabel {$\sigma_2^{-1}$} at 500 -30
\pinlabel {$\sigma_1^2$} at 650 -30
\pinlabel {$\sigma_2^{-3}$} at 930 -30
\pinlabel {$\sigma_1^{2}$} at 1200 -30
\endlabellist
\includegraphics[width=.6\textwidth]{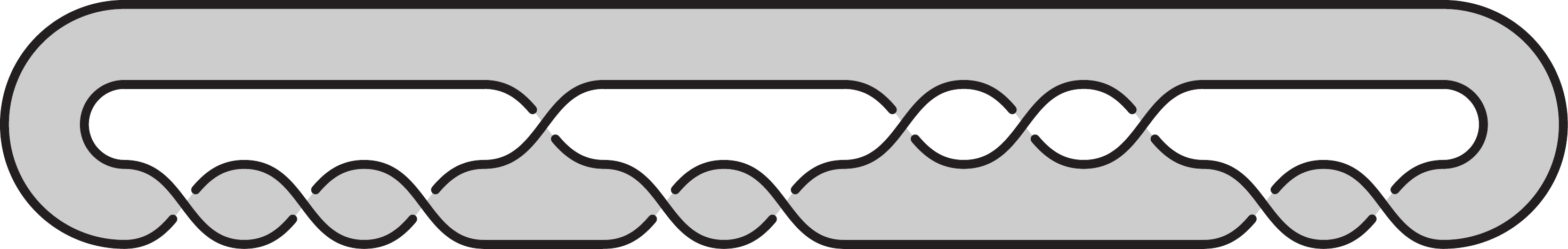}
\caption{The additive and subtractive forms $23/85=[3,1,2,3,2]_+ = [4,4,2,2,3]_-$.} 
\label{subtractiveform}
\end{figure}

In Figure~\ref{subtractiveform}, we see that, whereas a positive additive continued fraction representation describes a 2-bridge knot and its diagram as a plat closure of a braid, a subtractive continued fraction representation describes a 2-bridge knot and its diagram via a spanning surface $F$.  In subsection~\ref{S:HT}, we explain how the latter description applies to any 2-bridge  link $L(p/q)$ and any subtractive continued representation $[b_1,\hdots,b_k]_-=p/q$ in which each $|b_i|\geq 2$. This follows work of Hatcher and Thurston \cite{Hatcher-Thurston:1985}, although the diagrammatic aspect of our description appears to be new.

\subsection{Spanning surfaces}

Recall that a spanning surface $F$ for a  link $L\subset S^3$ is a connected, compact surface embedded in $S^3$ whose boundary is $L$ and that each Kauffman state $x$ of a diagram $D$ of $L$ has an associated state surface $F_x$ obtained by gluing twisted bands that correspond to crossings to disks the correspond to the state circles of $x$. The state $x$ and its surface $F_x$ are called {\it adequate} if each crossing arc joins distinct state circles. Ozawa proved that every adequate state surface from an alternating diagram is incompressible and $\partial$-incompressible \cite{Ozawa:2011}.  

From the Kauffman state $x$, one can also construct a {\it state graph} $G_x$, whose vertices correspond to the state circles of $x$ and whose edges correspond to the crossing bands in $F_x$.  The graph $G_x$ embeds in $F_x$ as its spine. See Figure~\ref{Fi:Plumbing}.

If $D$ has $c$ crossings and $x$ has $|x|$ state circles, then $\chi(F_x)=|x|-c$, and $\beta_1(F_x)=1+c-|x|.$ Thus, choosing a minimum-complexity state surface from $D$ amounts to choosing a state with the maximum number of state circles.

Every link diagram $D$ has two special Kauffman states, known as the \textit{checkerboard states}, constructed as follows. First choose a black and white checkerboard coloring of $S^2\setminus D$ where two complementary regions separated by an edge of $D$ are assigned different colors. One checkerboard state is obtained by choosing resolutions at every crossing so that the state circles form the boundaries of the black regions, and the other checkerboard state is obtained similarly with state circles bounding the white regions. If $B$ and $W$ are the state surfaces of the black and white checkerboard states respectively, then $B$ projects to the black regions, $W$ projects to the white regions, and the interiors of $B$ and $W$ intersect in vertical arcs, one at each crossing. The state graphs $G_B$ and $G_W$ of the black and white states are known as the \textit{Tait} or \textit{checkerboard graphs} of $D$. They are dual planar graphs.

\begin{figure}
\begin{center}
\includegraphics[width=\textwidth]{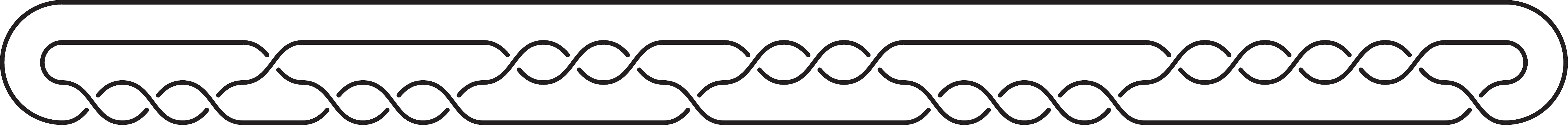}\\
\labellist \tiny
\pinlabel{$A$} at 926 90
\pinlabel{$A$} at 1033 90
\pinlabel{$A$} at 1140 90
\pinlabel{$A$} at 1356 90
\pinlabel{$A$} at 1463 90
\pinlabel{$A$} at 1570 90
\pinlabel{$B$} at 468 108
\pinlabel{$B$} at 383 61
\pinlabel{$B$} at 276 61
\pinlabel{$B$} at 169 61
\pinlabel{$B$} at 600 61
\pinlabel{$B$} at 708 61
\pinlabel{$B$} at 815 61
\pinlabel{$A$} at 1275 45
\pinlabel{$B$} at 1684 61
\pinlabel{$B$} at 1792 61
\pinlabel{$B$} at 1900 61
\pinlabel{$B$} at 2008 61
\pinlabel{$A$} at 2114 90
\pinlabel{$A$} at 2225 90
\pinlabel{$A$} at 2332 90
\pinlabel{$A$} at 2440 90
\pinlabel{$A$} at 2548 90
\pinlabel{$A$} at 2676 45
\endlabellist
\includegraphics[width=\textwidth]{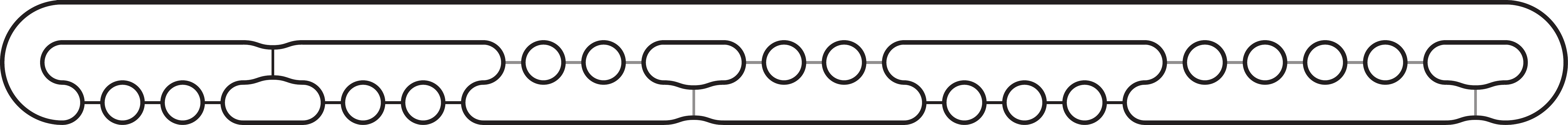}\\
\includegraphics[width=\textwidth]{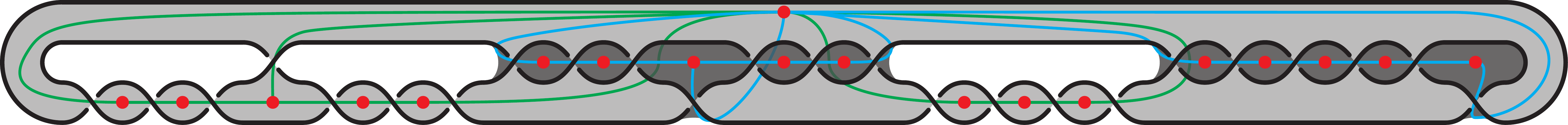}
\caption{A diagram $D$, state $x$, state surface $F_x$, and state graph $G_x$. }
\label{Fi:Plumbing}
\end{center}
\end{figure}

\subsection{Hatcher-Thurston surfaces}\label{S:HT}

Hatcher and Thurston described how every subtractive continued fraction representation $[b_1,\hdots,b_k]_-$ of $p/q$ yields a family of spanning surfaces for $L(p/q)$ \cite{Hatcher-Thurston:1985}.  Each surface in this family is a plumbing, or Murasugi sum, of surfaces $F_1,\hdots,F_k$, where each $F_i$ is an unknotted annulus or M\"obius band whose core circle has a framing of $b_i/2$; the plumbing is performed so that $F_i$ is glued along squares to $F_{i+1}$ and $F_{i-1}$ for each $i=2,\hdots,n-1$, as in Figure \ref{Fi:2BridgePlumbings}. The resulting surface $F=F_1*\cdots*F_k$ is not necessarily well-defined up to isotopy, because there are two ways to plumb each $F_{i+1}$ onto $F_1*\cdots *F_i$. (The ways to plumb $F_{i+1}$ onto $F_1*\cdots *F_i$ are equivalent if and only if $F_i$ or $F_{i+1}$ is a Hopf band.) The surface $F$ is incompressible and boundary-incompressible if and only if each $|b_i|\geq 2$. We assume this is the case for the rest of \textsection\ref{S:HT}.

In fact, given coprime $p,q$ with $0<p<q$, there is a reduced alternating diagram $D$ of $L(p/q)$ such that every subtractive continued fraction representation $\mathbf b=[b_1,\hdots,b_k]_-$ of $p/q$ or $(p-q)/q$ with each $|b_i|\geq 2$ corresponds to an adequate state surface $F_{\mathbf b}$ of $D$. Given such $\mathbf b$ we construct $F_{\mathbf b}=F_1*\cdots*F_k$ and $D$ as follows.

If $b_i>0$, we draw $F_i$ as a checkerboard surface as shown in Figure~\ref{Fi:PosBand}, whereas if $b_i<0$, we draw $F_i$ as shown in Figure~\ref{Fi:NegBand}.
If $b_{i-1}>0$, we draw $F_i$ as shown second or third in Figure~\ref{Fi:PosBand} or \ref{Fi:NegBand}; otherwise, $i=1$ or $b_{i-1}<0$, and we draw $F_i$ as shown first or last (with ``nested C's'' on the left).  If $b_{i+1}>0$, we draw $F_i$ as shown first or second in Figure~\ref{Fi:PosBand} or \ref{Fi:NegBand}; otherwise, $i=n$ or $b_{i+1}<0$, and we draw $F_i$ as shown third or fourth (with nested C's on the right). Note that in all cases we draw $F$ with nested C's on the left and right.

\begin{figure}
\begin{center}
 \labellist
\tiny\hair 4pt
\pinlabel $5$ [c] at 330 110
\pinlabel $5$ [c] at 1060 110
\pinlabel $5$ [c] at 1800 110
\pinlabel $5$ [c] at 2580 110
\endlabellist
\includegraphics[width=\textwidth]{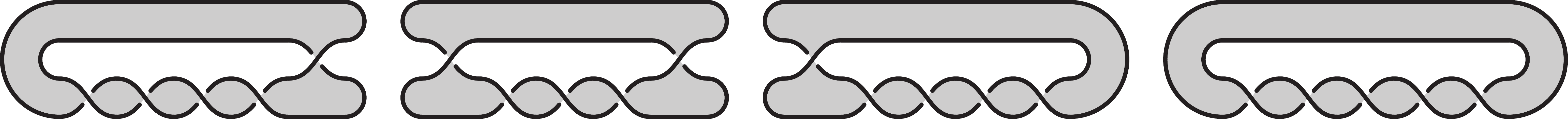}
\caption{The band $F_{i}$ when $b_i>0$ (shown here when $b_i=5$)} 
\label{Fi:PosBand}
\end{center}
\end{figure}

\begin{figure}
\begin{center}
 \labellist
\tiny\hair 4pt
\pinlabel $-5$ [c] at 440 45
\pinlabel $-5$ [c] at 1050 45
\pinlabel $-5$ [c] at 1700 45
\pinlabel $-5$ [c] at 2600 45
\endlabellist
\includegraphics[width=\textwidth]{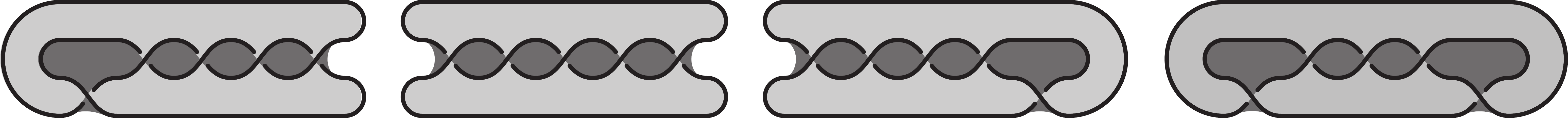}
\caption{The band $F_{i}$ when $b_i<0$ (shown here when $b_i=-5$)} 
\label{Fi:NegBand}
\end{center}
\end{figure}

Each $F_{i+1}$ is plumbed onto $F_i$ in $F_1*\cdots*F_i$ in one of the four ways shown in Figure~\ref{Fi:2BridgePlumbings}, depending on the signs of $b_i$ and $b_{i+1}$. Figures~\ref{Fi:EvenCB} and \ref{Fi:PosCB} both show full examples. It is interesting to note that, under this construction, the only crossings that are smoothed horizontally in a Kauffman state are those where two bands of the same sign are plumbed together; the rest are smoothed vertically.

\begin{figure} 
\begin{center}
 \labellist
\small\hair 4pt
\pinlabel $*$ [c] at 510 115
\pinlabel $=$ [c] at 1080 115
\pinlabel $*$ [c] at 580 400
\pinlabel $=$ [c] at 1080 400 
\pinlabel $*$ [c] at 2500 115
\pinlabel $=$ [c] at 3080 115
\pinlabel $*$ [c] at 2580 400
\pinlabel $=$ [c] at 3080 400  
\endlabellist
\includegraphics[width=\textwidth]{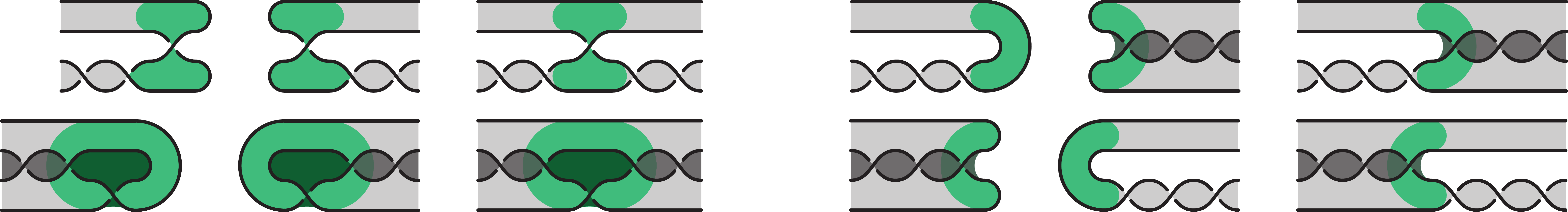}
\caption{Plumbing the band $F_{i+1}$ (center in each row) onto $F_1*\cdots*F_i$ (left)}
\label{Fi:2BridgePlumbings}
\end{center}
\end{figure}

\begin{figure}
\begin{center}
 \labellist
\tiny\hair 4pt
\pinlabel $4$ [c] at 270 360
\pinlabel $4$ [c] at 270 650
\pinlabel $4$ [c] at 700 110
\pinlabel $4$ [c] at 700 650
\pinlabel $-4$ [c] at 1020 300
\pinlabel $-4$ [c] at 1020 580
\pinlabel $-4$ [c] at 1460 40
\pinlabel $-4$ [c] at 1460 580
\pinlabel $4$ [c] at 1850 360
\pinlabel $4$ [c] at 1850 650
\pinlabel $-6$ [c] at 2400 40
\pinlabel $-6$ [c] at 2400 580 \endlabellist
\includegraphics[width=\textwidth]{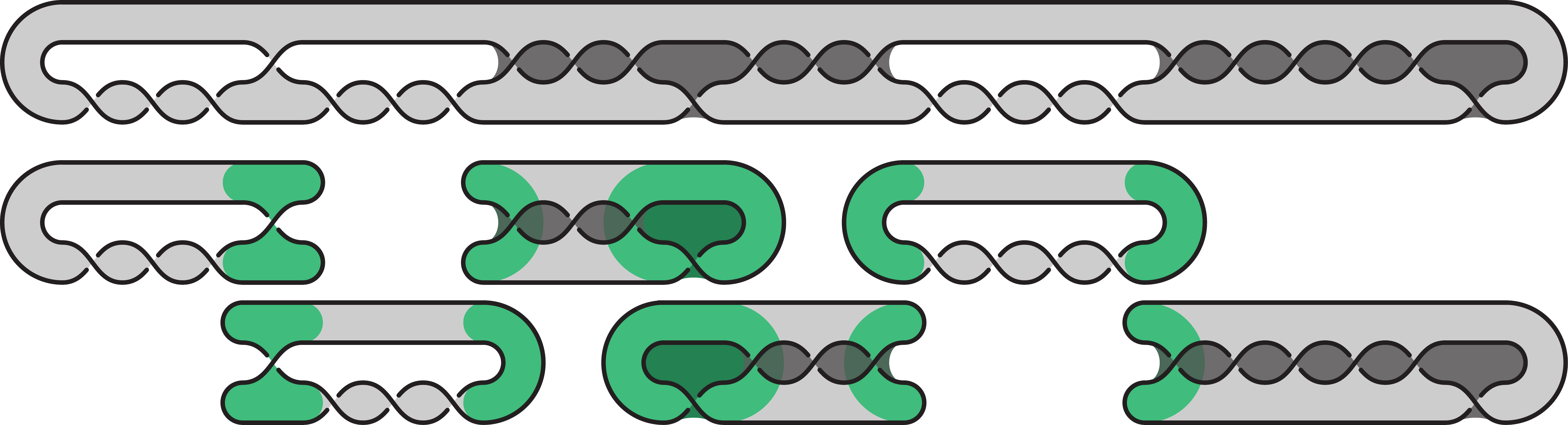}
\caption{The Hatcher--Thurston surface associated to  $[4,4, -4,-4,4,-6]_-$}
\label{Fi:EvenCB}
\end{center}
\end{figure}

\begin{figure}
\begin{center}
 \labellist
\tiny\hair 4pt
\pinlabel $4$ [c] at 385 652
\pinlabel $5$ [c] at 820 652
\pinlabel $2$ [c] at 1090 652
\pinlabel $2$ [c] at 1200 652
\pinlabel $3$ [c] at 1360 652
\pinlabel $2$ [c] at 1528 652
\pinlabel $2$ [c] at 1632 652
\pinlabel $6$ [c] at 1950 652
\pinlabel $2$ [c] at 2284 652
\pinlabel $2$ [c] at 2392 652
\pinlabel $2$ [c] at 2500 652
\pinlabel $2$ [c] at 2608 652
\pinlabel $2$ [c] at 2716 652
\pinlabel $4$ [c] at 275 362
\pinlabel $5$ [c] at 415 112
\pinlabel $2$ [c] at 745 362
\pinlabel $2$ [c] at 882 112
\pinlabel $3$ [c] at 1115 362
\pinlabel $2$ [c] at 1318 112
\pinlabel $2$ [c] at 1472 362
\pinlabel $6$ [c] at 1845 112
\pinlabel $2$ [c] at 2200 362
\pinlabel $2$ [c] at 2360 112
\pinlabel $2$ [c] at 2512 362
\pinlabel $2$ [c] at 2665 112
\pinlabel $2$ [c] at 2872 362
\endlabellist
\includegraphics[width=\textwidth]{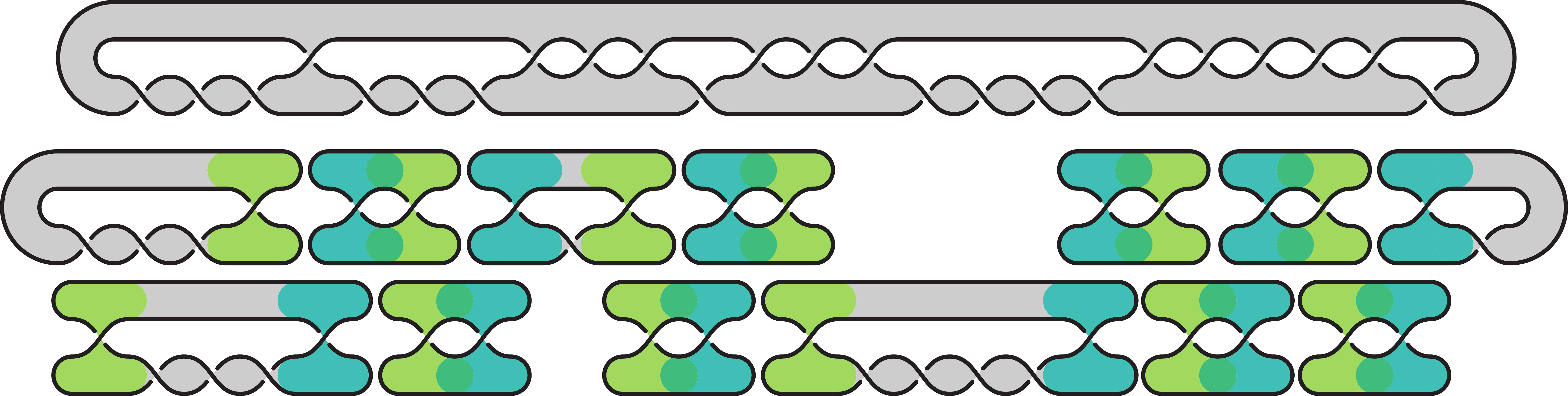}
\caption{The Hatcher--Thurston surface associated to  $[4,5,2,2,3,2,2,6,2,2,2,2,2]_-$}
\label{Fi:PosCB}
\end{center}
\end{figure}

Note that, given $\mathbf b=[b_1,\dots,b_k]_-$, the diagram $D$ we have just described is reduced and alternating, so by Tait's first conjecture \cite{Kauffman1987,Murasugi1987,Thistlethwaite1987} it realizes the crossing number of $L[b_1,\dots,b_k]_-$:
\begin{equation*}\label{E:crossingnumbereqn}
\text{cr}(L[b_1,\dots,b_k]_-)=\sum_{i=1}^k|b_i|-\sum_{i=1}^{k-1}\delta_{\text{sign}(b_i),\text{sign}(b_{i+1})},
\end{equation*}
where $\delta_{x,y}$ equals 1 if $x=y$ and 0 otherwise (we will use Kronecker's delta notation throughout). This is because each crossing in $D$ contributes $\pm\frac{1}{2}$ to the framing(s) of either one or two of the bands $F_i$, and the crossings that contribute to two framings are precisely those where $F_i$ and $F_{i+1}$ are plumbed together when $b_i$ and $b_{i+1}$ have the same sign. In particular, when all $b_i\geq2$, we have
\begin{equation}\label{E:subformcrossingnum}
\text{cr}(L[b_1,\dots, b_k]_-) = \sum_{i=1}^k b_i - (k-1).
\end{equation}

Finally, note that all-even subtractive continued fractions correspond to Seifert surfaces, since the surface $F=F_1*\cdots*F_k$ we have constructed from a subtractive continued fraction $[b_1,\hdots,b_k]_-$ is orientable if and only if every $b_i$ is even.  In particular, $p/q$ will have exactly one all-even subtractive continued fraction if $q$ is odd (since $L(p/q)$ is a knot and exactly one state surface from $D$ is orientable), and $p/q$ will have exactly two all-even subtractive continued fractions if $q$ is even (since $L(p/q)$ is a 2-component link and exactly two state surfaces from $D$ are orientable).

In summary, we have:

\begin{theorem}
Let $p/q$ be a reduced fraction with $0<p<q$, and let $D$ be the reduced alternating diagram of $L(p/q)$ described in this subsection.  Then the adequate state surfaces from $D$ correspond, via the construction described in this subsection, to the subtractive continued fraction representations of $p/q$ and $(p-q)/q$ with all coefficients at least 2 in absolute value.
\end{theorem}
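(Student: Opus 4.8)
The plan is to establish the stated correspondence as an explicit bijection between the adequate Kauffman states of the \emph{fixed} diagram $D$ and the subtractive continued fraction representations of $p/q$ and $(p-q)/q$ with all coefficients at least $2$ in absolute value. I would fix $D$ once and for all to be the reduced alternating diagram determined by the positive additive continued fraction of $p/q$ (equivalently, the diagram produced by the plumbing construction of \textsection\ref{S:HT}), and stress at the outset that all of the surfaces $F_{\mathbf b}$ in question are states of this \emph{single} projection: they differ only in their resolutions, not in their underlying diagram. The forward direction is then largely bookkeeping on the construction already given. For each $\mathbf b=[b_1,\dots,b_k]_-$ with $|b_i|\ge 2$, the surface $F_{\mathbf b}=F_1*\cdots*F_k$ carries a prescribed resolution at every crossing (horizontal exactly where equal-sign bands are plumbed, vertical elsewhere, as noted after Figure~\ref{Fi:2BridgePlumbings}), and I would check from the local pictures in Figures~\ref{Fi:PosBand}--\ref{Fi:2BridgePlumbings} that (i) the resulting projection is exactly $D$ --- this follows because $F_{\mathbf b}$ is reduced and alternating, spans $L(p/q)\cong L((p-q)/q)$, and realizes the minimal crossing number \eqref{E:subformcrossingnum}, so that the flype normalization of \textsection\ref{S:CF} pins the projection down --- and (ii) every crossing band joins two distinct state circles, so $F_{\mathbf b}$ is adequate; here the hypothesis $|b_i|\ge 2$ is precisely what rules out a band closing up on itself.

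For the reverse direction I would prove a local lemma describing adequacy twist-region by twist-region. Grouping the crossings of $D$ into its maximal twist regions, I would show that a state $x$ is adequate if and only if, inside each twist region, the resolutions are \emph{coherent}, in the precise sense that no crossing is smoothed so as to create a loop in the state graph $G_x$ (a crossing arc with both endpoints on the same state circle); the only remaining freedom is then where one cuts a run of like-smoothed crossings into consecutive bands. The combinatorial data recording these cut points across all twist regions is exactly a subtractive continued fraction: each maximal run of nested state circles (the ``nested C's'' of the construction) becomes a single band $F_i$ of framing $b_i/2$, and the prohibition against a band closing on itself is the requirement $|b_i|\ge 2$. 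Reading the framings off in order recovers a sequence $[b_1,\dots,b_k]_-$, whose value is forced to be $p/q$ or $(p-q)/q$ according to which checkerboard coloring the state surface is adapted to.

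I would finish by verifying that the two assignments are mutually inverse: starting from $\mathbf b$, building $F_{\mathbf b}$, and then reading off its band data returns $\mathbf b$, and conversely reconstructing a state from its cut sequence recovers the original adequate state. This makes the correspondence a genuine bijection and completes the proof.

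The main obstacle I anticipate is the local adequacy lemma in the reverse direction: making precise, in the language of the explicit $4$-plat and plumbing diagram, exactly which mixed resolutions within and between twist regions produce a loop in $G_x$, and verifying that the loop-free patterns biject with the admissible cut sequences. The delicate accounting occurs at the junctions between consecutive twist regions, where the sign of $b_i$ versus $b_{i+1}$ interacts with the horizontal-versus-vertical smoothing rule; this is where I would lean most heavily on a careful case analysis of the four plumbing configurations in Figure~\ref{Fi:2BridgePlumbings}.
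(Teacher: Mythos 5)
Your overall strategy is genuinely different from the paper's. The paper's proof is two sentences: the forward direction is the explicit construction already given in \textsection\ref{S:HT}, and the reverse direction is outsourced entirely to two citations --- Ozawa's theorem \cite{Ozawa:2011} that adequate state surfaces of alternating diagrams are $\pi_1$-essential, followed by Hatcher--Thurston's classification \cite{Hatcher-Thurston:1985} of essential spanning surfaces of $2$-bridge links by continued fraction expansions. You instead propose a self-contained combinatorial argument: a twist-region-by-twist-region analysis of which resolution patterns of the fixed diagram $D$ are adequate, and a bijection between the loop-free patterns and the admissible cut sequences. If carried out, this would be more elementary and would avoid the appeal to essentiality and to the Farey-graph machinery; that is a real gain in transparency, at the cost of the case analysis you yourself flag.

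However, there is a genuine gap in your reverse direction beyond the anticipated case analysis. You assert that the sequence $[b_1,\dots,b_k]_-$ read off from an arbitrary adequate state is ``forced to be $p/q$ or $(p-q)/q$ according to which checkerboard coloring the state surface is adapted to,'' but you give no argument for this, and it is exactly the nontrivial content that the Hatcher--Thurston citation supplies. Your local lemma, even granted in full, only shows that adequate states of $D$ biject with \emph{some} family of integer tuples with $|b_i|\ge 2$ satisfying the crossing-count constraint of Equation~\eqref{E:subformcrossingnum}; it does not show that the resulting continued fractions evaluate to $p/q$ or $(p-q)/q$ rather than to some other fraction determining the same link. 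Note in particular that Schubert's classification identifies $L(p/q)$ with $L(p'/q)$ whenever $pp'\equiv 1 \Mod{q}$, so ``the band data spans the right link'' pins the value down only up to this larger equivalence, not to the two cosets $p/q$ and $(p-q)/q$ named in the statement. Closing this gap requires either an induction on twist regions that tracks the fraction itself (essentially re-deriving the Hatcher--Thurston correspondence), or the citation the paper uses. A secondary, smaller issue: your claim that all the surfaces $F_{\mathbf b}$ live on literally the same projection $D$ is justified only by an appeal to minimality and flype normalization, which shows the diagrams agree up to flypes rather than on the nose; the paper's construction sidesteps this by building $D$ once and exhibiting each state inside it, and your proof should do the same rather than re-derive $D$ from each $\mathbf b$ separately.
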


\begin{proof}
Given such a continued fraction representation, we have described how to construct the associated surface.  Conversely, by a result of Ozawa \cite{Ozawa:2011}, each such surface is $\pi_1$-essential, hence, by a result of Hatcher and Thurston \cite{Hatcher-Thurston:1985}, is described by such a continued fraction. 
\end{proof}

\section{A procedure to compute the crosscap number of an alternating link}
\label{sec:algorithm}

In this section, we adapt a procedure of Adams and Kindred to compute the crosscap number of an alternating link.

\subsection{Finding minimal-complexity 1- and 2-sided surfaces}\label{S:Find}

Our formulas for the unoriented genus and crosscap number of a 2-bridge link $L$ come from two results of Adams and Kindred which pertain more generally to alternating knots and links \cite{Adams-Kindred:2013}. The first result tells us that we can compute $\gamma(L)$ and $\Gamma(L)$ by minimizing complexity over all state surfaces for a given alternating diagram of $L$:

\begin{theorem}[Corollary 6.1 of \cite{Adams-Kindred:2013}]\label{T:AK61}
Let $D$ be an alternating diagram of a link $L$. Among all state surfaces from $D$, choose one, $F$, of minimal complexity.  Then $\Gamma(L)=\beta_1(F)$.  If $F$, or any other minimal complexity state surface from $D$, is nonorientable, then $\gamma(L)=\beta_1(F)$.  Otherwise, $\gamma(L)=\beta_1(F)+1$.
\end{theorem}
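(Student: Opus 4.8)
The plan is to deduce this statement directly from the Adams--Kindred existence result quoted in the introduction, which guarantees a state surface $F'$ from $D$ with $\beta_1(F')=\Gamma(L)$ together with a \emph{nonorientable} state surface realizing $\gamma(L)$, combined with the elementary facts recorded after Definition~\ref{D:CC}: that $\gamma(L)\in\{\Gamma(L),\Gamma(L)+1\}$, and the obvious inequality $\Gamma(L)\le\gamma(L)$ coming from the fact that $\gamma$ minimizes $\beta_1$ over a subset of the surfaces that $\Gamma$ minimizes over. The one structural point I would check first is that, because $D$ is connected, every state surface $F_x$ is a genuine connected spanning surface, so $\beta_1(F_x)\ge\Gamma(L)$ for every state $x$; this makes ``minimal complexity among state surfaces'' meaningful and bounds it below by $\Gamma(L)$.

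First I would establish $\Gamma(L)=\beta_1(F)$. Since $F$ has minimal complexity among all state surfaces from $D$ and the Adams--Kindred surface $F'$ is a state surface with $\beta_1(F')=\Gamma(L)$, minimality gives $\beta_1(F)\le\beta_1(F')=\Gamma(L)$, while the lower bound $\beta_1(F)\ge\Gamma(L)$ is immediate from the previous paragraph. Hence $\beta_1(F)=\Gamma(L)$, and in particular every minimal complexity state surface from $D$ shares this common value $\Gamma(L)$.

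Next I would split into the two cases for $\gamma(L)$. If some minimal complexity state surface $F_0$ is nonorientable, then $F_0$ is a nonorientable spanning surface with $\beta_1(F_0)=\Gamma(L)$, so $\gamma(L)\le\Gamma(L)$; together with $\gamma(L)\ge\Gamma(L)$ this forces $\gamma(L)=\Gamma(L)=\beta_1(F)$. Conversely, if every minimal complexity state surface is orientable, I would argue by contradiction: were $\gamma(L)=\Gamma(L)$, then the nonorientable state surface furnished by Adams--Kindred would have $\beta_1=\gamma(L)=\Gamma(L)$ and thus be a nonorientable minimal complexity state surface, contradicting the standing assumption. Therefore $\gamma(L)\ne\Gamma(L)$, and since $\gamma(L)\in\{\Gamma(L),\Gamma(L)+1\}$ we conclude $\gamma(L)=\Gamma(L)+1=\beta_1(F)+1$.

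I expect no serious obstacle, since the genuine content lives in the cited Adams--Kindred existence theorem and the remaining work is purely bookkeeping against the definitions. The one point that warrants care is the logical reading of ``$F$, or any other minimal complexity state surface, is nonorientable'' as the existential claim that \emph{some} minimal complexity state surface is nonorientable, whose negation is that \emph{all} minimal complexity state surfaces are orientable; getting this quantifier structure right is exactly what makes the two cases exhaustive and the contradiction in the orientable case valid.
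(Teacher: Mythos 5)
Your argument is correct, but there is nothing in the paper to compare it against: the paper does not prove this statement at all, it imports it verbatim as Corollary~6.1 of Adams--Kindred and uses it as a black box. What you have actually done is derive this formulation from the \emph{other} Adams--Kindred statement quoted in the introduction (the existence of state surfaces $F$, $F'$ from $D$ with $F$ nonorientable, $\beta_1(F)=\gamma(L)$, and $\beta_1(F')=\Gamma(L)$), plus the elementary facts $\Gamma(L)\le\gamma(L)\le\Gamma(L)+1$ and $\beta_1(F_x)\ge\Gamma(L)$ for every state surface of a connected diagram. That reduction is logically sound --- in particular your reading of the quantifier in ``$F$, or any other minimal complexity state surface, is nonorientable'' as an existential claim is the right one, and the contradiction in the all-orientable case goes through because a nonorientable state surface with $\beta_1=\Gamma(L)$ would itself be of minimal complexity. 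The only caveats worth recording are that the two inputs come from the same source and are essentially equivalent formulations of one theorem, so your proof establishes an equivalence rather than independent content, and that the existence statement you invoke is quoted for diagrams with $c>0$ crossings and connected $D$, so the zero-crossing and split cases need the trivial separate check you already gesture at.
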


The second result is that the following procedure narrows the search for the complexity-minimizing state surfaces of a given diagram:

\begin{procedure}[Minimal genus algorithm from \cite{Adams-Kindred:2013}]\label{Proc:AK Alt}~\\
\vspace{-10pt}
  \begin{itemize}
    \item[(1)] For a given link diagram, choose a connected component $D$, and choose a complementary region $R$ of $D$ incident to the fewest crossings (of $D$).
    \item[(2a)] If $R$ abuts one or two crossings, smooth each crossing in $\partial R$ so that $\partial R$ becomes a state circle. 
    \item[(2b)] Otherwise, by a simple Euler characteristic argument, $R$ abuts three crossings. Create two branches of our algorithm, each of which will ultimately yield a state surface. We will later choose to follow the branch that produced the smaller genus surface and ignore the other. For one of these branches, split each crossing on $\partial R$ so that $\partial R$ becomes a circle. For the other branch, split each of these three crossings the opposite way.
    \item[(3)] Repeat until each branch resolves all crossings, yielding a state. Of all associated state surfaces, choose one of minimal complexity.
\end{itemize}  
\end{procedure}

Theorem 3.3 of \cite{Adams-Kindred:2013} states that Procedure~\ref{Proc:AK Alt} always produces a state surface $F$ of minimal complexity (among all state surfaces for a given diagram).  In the alternating case, Theorem~\ref{T:AK61} implies that $F$ realizes the unoriented genus of the link it spans.  The question of determining crosscap number is more subtle.  More on this momentarily.

First, though, we note that Procedure~\ref{Proc:AK Alt} simplifies in the 2-bridge case, due to two observations.  The first is that any nontrivial 2-bridge diagram has bigons or 1-gons, as does any split union of connect sums of such diagrams. The second is that smoothing the crossings around a bigon or 1-gon takes any split union of connect sums of 2-bridge diagrams to another such diagram. Here is the simplified procedure:

\begin{procedure}[Adams--Kindred]\label{Proc:AK 2Bridge}
Let $D$ be a disjoint union of connect sums of alternating 2-bridge link diagrams. In some nontrivial component $D'$ of $D$, choose a 1-gon (if possible) or bigon $X$ of $S^2\setminus D'$, and smooth its incident crossing(s) so that its boundary becomes a state circle.  Repeat until all crossings of $D$ have been resolved, leaving a state $x$ and its state surface $F_x$.
\end{procedure}

Again, Theorem 3.3 and Corollary 6.1 of \cite{Adams-Kindred:2013} imply:

\begin{theorem}\label{T:AKalgo} Given a (disjoint union of connect sums of) 2-bridge alternating diagram(s) $D$ of a link $L\subset S^3$, applying Procedure~\ref{Proc:AK 2Bridge} to $D$ produces a spanning surface $F_x$ of minimal complexity: $\beta_1(F_x)=\Gamma(L)$.
\end{theorem}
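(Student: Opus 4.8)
The plan is to show that Procedure~\ref{Proc:AK 2Bridge} is nothing more than a run of the general Procedure~\ref{Proc:AK Alt} in which the branching step (2b) is never invoked, and then to quote the two results of Adams--Kindred recorded above. Recall that Theorem 3.3 of \cite{Adams-Kindred:2013} guarantees that every run of Procedure~\ref{Proc:AK Alt} terminates in a state surface of minimal complexity among all state surfaces of the given diagram, and that Theorem~\ref{T:AK61} identifies this minimal complexity with $\Gamma(L)$. Thus it suffices to check that, when the input $D$ is a disjoint union of connect sums of alternating 2-bridge diagrams, the region chosen in step (1) can always be taken to abut at most two crossings, so that step (2a) --- and never the branching step (2b) --- applies, and that the diagrams produced along the way remain of the same type, so that this persists at every stage.

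The two structural facts we need are the observations flagged before the statement. First, \emph{every nontrivial diagram in our class contains a 1-gon or a bigon}. In a reduced alternating 2-bridge diagram this follows from the twist-region structure of the 4-plat: writing the diagram via its positive subtractive continued fraction $[b_1,\dots,b_k]_-$ with every $b_i\ge 2$, any band with $b_i\ge 3$ contributes a bigon, and the nested-C pattern at a terminal band always exhibits one as well. A nontrivial component is a connect sum of 2-bridge diagrams at least one of which is nontrivial, and such a bigon or 1-gon persists in the connect sum. I regard this as the combinatorial heart of the argument, and the case in which every $b_i=2$ is the one requiring the most care, since the single bigon of an interior length-$2$ band can be consumed by the plumbing on either side; here one argues on a terminal band. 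Second, \emph{smoothing the crossing(s) incident to a 1-gon or bigon carries our class to itself}. Smoothing the unique crossing of a 1-gon removes a kink, possibly splitting off a trivial circle, while smoothing the two crossings of a bigon either shortens a twist region, resolves a clasp into a connected sum, or splits off a trivial unknotted circle; in every case the result is again a disjoint union of connect sums of alternating 2-bridge diagrams.

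With these two facts in hand the proof assembles quickly. Since $D$ is nontrivial we may, by the first observation, locate inside some nontrivial component $D'$ a 1-gon or bigon. As $D'$ contains crossings, each of its complementary regions abuts at least one crossing, so a 1-gon (one crossing) or, in its absence, a bigon (two crossings) abuts the fewest crossings of any region of $D'$, which is exactly the choice prescribed by step (1) of Procedure~\ref{Proc:AK Alt}. Consequently step (2a) applies, and the branching step (2b), which fires only when the minimal region abuts three crossings, is never reached. By the second observation the smoothed diagram again lies in our class, so the same analysis applies to it; since each smoothing removes one or two crossings, induction on the number of crossings shows that the entire run avoids branching and terminates in a single state $x$. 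Because no branch is ever created, $x$ is the state delivered by the corresponding branch-free run of Procedure~\ref{Proc:AK Alt}, so Theorem 3.3 of \cite{Adams-Kindred:2013} shows its state surface $F_x$ has minimal complexity among all state surfaces of $D$, and Theorem~\ref{T:AK61} then gives $\beta_1(F_x)=\Gamma(L)$. The main obstacle, as noted, is the first observation: verifying uniformly that a 1-gon or bigon is always present, and in particular handling the all-$2$ subtractive forms.
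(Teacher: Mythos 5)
Your argument is correct and follows essentially the same route as the paper: the paper likewise justifies Theorem~\ref{T:AKalgo} by noting the two structural observations (every nontrivial diagram in this class has a 1-gon or bigon, and smoothing around one stays in the class), so that Procedure~\ref{Proc:AK 2Bridge} is a branch-free run of Procedure~\ref{Proc:AK Alt}, and then citing Theorem 3.3 and Corollary 6.1 of Adams--Kindred. Your additional discussion of the positive subtractive form and the all-$2$ case merely fleshes out observations the paper asserts without proof.
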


We now begin addressing the more subtle question of how to utilize Procedures~\ref{Proc:AK Alt} and \ref{Proc:AK 2Bridge} when computing crosscap number.

\begin{lemma}\label{L:UseDual}
If a link diagram $D\subset S^2$ has a state $x$ such that all cycles in the state graph $G_x$ have length at least 4, then $F_x$ is the unique minimal-complexity state surface from $D$.
\end{lemma}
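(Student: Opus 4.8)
The plan is to show that $x$ is the unique state of $D$ maximizing the number of state circles $|x|$. Since every state surface $F_{x'}$ from $D$ satisfies $\beta_1(F_{x'})=1+c-|x'|$, minimal complexity is the same as maximal circle count, so it suffices to prove $|x_S|<|x|$ for every nonempty set $S$ of crossings, where $x_S$ denotes the state obtained from $x$ by switching exactly the resolutions at the crossings of $S$ (any state $x'\ne x$ arises this way for a unique $S\ne\emptyset$).

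The key step is to compute $|x_S|$ as a boundary count. Switching the resolution at one crossing is a band surgery on the state circles, so the circles of $x_S$ are precisely the boundary components of the ribbon graph $\Sigma_S$ consisting of one disk for each state circle of $x$ together with one band for each crossing in $S$ (attached in the switched position, with whatever framing the diagram dictates). The underlying graph of $\Sigma_S$ is the spanning subgraph $H=(V(G_x),S)$. For any connected ribbon graph the number of boundary components satisfies $b\le\beta_1+1$: since $\chi=1-\beta_1$, and $\chi=2-2g-b\le 2-b$ in the orientable case while $\chi=2-g-b\le 1-b$ in the nonorientable case, we get $b\le 2-\chi=\beta_1+1$ \emph{regardless of the framings}. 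Summing over the $k$ components of $H$ and writing $r(H)=|V(G_x)|-k$ for the rank yields
\[ |x_S|=|\partial\Sigma_S|\ \le\ \beta_1(H)+k\ =\ |x|+|S|-2\,r(H). \]
The crucial point is that this bound is insensitive to the (a priori unknown) band twistings, so no framing bookkeeping is needed.

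Finally I would invoke planarity. The state circles are disjoint simple closed curves in $S^2$, so $G_x$, and hence its subgraph $H$, embeds in $S^2$. A planar graph with girth at least $4$ satisfies $e_i\le 2v_i-4$ on each component containing a cycle, while each tree component carrying an edge has $e_i=v_i-1<2(v_i-1)$; since $S\ne\emptyset$ forces at least one component with an edge, summing gives the strict inequality $|S|<2\,r(H)$. Combined with the displayed estimate, $|x_S|\le |x|+|S|-2r(H)<|x|$, so every other state surface has strictly larger $\beta_1$, and $F_x$ is the unique minimizer.

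The main obstacle is the boundary-count identification in the second step: verifying carefully that switching $S$ produces exactly $\partial\Sigma_S$, and that the Euler-characteristic estimate $b\le\beta_1+1$ genuinely holds for every ribbon structure so that the actual twistings never enter. Once this is in place the role of the hypothesis is transparent: girth at least $4$ forbids loops and bigons in $G_x$ — precisely the configurations (as in the Hopf link, where switching a bigon leaves the circle count unchanged) that would let a rival state tie $x$ — while planarity upgrades the girth bound into the strict inequality $|S|<2\,r(H)$ that defeats even large, highly connected choices of $S$.
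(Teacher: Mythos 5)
Your proof is correct, but it takes a genuinely different route from the paper's. The paper argues by induction on the number $n$ of crossings: if a rival state $y\neq x$ with maximal circle count agrees with $x$ at some crossing, that crossing is resolved and the inductive hypothesis applies; otherwise $y$ is the dual state of $x$, so each circle of $y$ traces a cycle of $G_x$ and hence meets at least four crossings, forcing $|y|\le n/2$, which is then played off against the checkerboard states via $|b|+|w|=n+2|D|$ together with $|b|,|w|\le |y|$ to reach a contradiction. Your argument is instead direct and global: the Euler-characteristic bound $|\partial\Sigma_S|\le 2-\chi(\Sigma_S)$, which holds for every ribbon structure so that the band framings genuinely never enter, gives $|x_S|\le |x|+|S|-2r(H)$, and the planar girth bound supplies $|S|<2r(H)$ for every nonempty $S$. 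This buys a quantitative deficit $|x|-|x_S|\ge 2r(H)-|S|$ and avoids induction entirely, at the cost of the two facts you rightly flag as needing care: the identification of the circles of $x_S$ with $\partial\Sigma_S$ does hold, but it requires the bands to be attached flatly along the crossing arcs so that the boundary surgery realizes the switched resolution (the half-twisted bands of the state surface would instead return the link itself); and the planarity of $G_x$ is not quite immediate from the state circles being disjoint in $S^2$, since they may be nested and need not bound disjoint disks, but it does follow because $G_x$ is a minor of the planar $1$-complex formed by the state circles together with the crossing arcs, obtained by contracting each circle to a point. The hypothesis that every cycle has length at least $4$ also guarantees $G_x$ is simple, which your girth bound tacitly uses. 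The paper's proof sidesteps planarity of $G_x$ altogether by exploiting the checkerboard states; yours replaces that duality trick with a uniform counting estimate. Both arguments are sound.
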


\begin{proof}
{Let $|D|$ denote the number of components of $D$, and given a state $y$, let $|y|$ denote the number of state circles in $y$.} We argue by induction on $n$, the number of crossings in $D$.  

The base case is vacuous.  For the induction step, {let $D$ and $x$ be as hypothesized, and} let $y$ be a state of $D$ whose state surface has minimal complexity. Then $|y|$ is maximal among all states of $D$; in particular, $|x|\leq |y|$. Assume for contradiction that $y$ and $x$ are different states. 

If $y$ and $x$ are not dual states, then there is a crossing $c$ where their smoothings match. Resolving $c$ (following the smoothing shared by $x$ and $y$) reduces $D$ to an $(n-1)$-crossing diagram $D'$ with states $x'$ and $y'$ identical to $x$ and $y$ but without the crossing arc at $c$.  In particular, all cycles in the state graph $G_{x'}$ have length at least 4, so $|y'|<|x'|$ by induction. Yet, $|x'|=|x|$ and $|y'|=|y|$.  Contradiction. 

We may thus assume that $x$ and $y$ are dual. It follows that each state circle in $y$ corresponds to a cycle in $G_x$ and therefore is incident to at least four crossings. Meanwhile, each crossing is incident to at most two state circles of $y$, so $|y|\leq n/2$.  

For each component $D_i$ of $D$, say with $n_i$ crossings, denote its checkerboard states by $b_i$ and $w_i$, so that $|b_i|+|w_i|=n_i+2$.  Let $b$ and $w$, respectively, be the disjoint union of the $b_i$'s and $w_i$'s. 
Summing over components of $D$ gives $|b|+|w|=n+2|D|$.   Yet $|b|\leq|y|\leq n/2$ and $|w|\leq|y|\leq n/2$, which is a contradiction. 
 \end{proof}

The following lemma extends Theorem 5 of Hirasawa and Teragaito \cite{Hirasawa_Teragaito:2006} to alternating links.

\begin{lemma}\label{L:HopfBand}
Let $D$ be an alternating diagram of a nontrivial link $L\subset S^3$ with no Hopf link connect summand, and let $F_x$ be a state surface from $D$ with $\beta_1(F_x)=\Gamma(L)$.  If $\gamma(L)=\Gamma(L)+1$, then every cycle in the state graph $G_x$ has an even length of at least 4.
\end{lemma}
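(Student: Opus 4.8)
The plan is to convert the orientability hypothesis into a combinatorial statement about $G_x$ and then run two separate arguments: one handling the parity (even length) and one excluding length $2$ (the ``at least $4$''). The key structural input is the orientability criterion for state surfaces: because each crossing band carries a single half-twist, crossing any band reverses the local co-orientation, so $F_x$ is orientable if and only if one can $2$-color the state circles so that every crossing joins circles of opposite colors. Equivalently, $F_x$ is orientable if and only if $G_x$ is bipartite, i.e. every cycle of $G_x$ has even length. I would record (or cite) this equivalence first, since it is what lets me read off cycle parities from orientability.

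Next I dispose of odd cycles. Since $\beta_1(F_x)=\Gamma(L)$, the surface $F_x$ has minimal complexity among state surfaces of $D$, so it is one of the minimizers in Theorem~\ref{T:AK61}. The hypothesis $\gamma(L)=\Gamma(L)+1$ is, by that theorem, precisely the statement that \emph{no} minimal-complexity state surface from $D$ is nonorientable; in particular $F_x$ itself is orientable. By the criterion above, $G_x$ is bipartite, so every cycle of $G_x$ has even length. This already excludes cycles of length $1$ and $3$ (and all odd lengths), so the only remaining task is to rule out cycles of length exactly $2$.

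Suppose for contradiction that $G_x$ has a cycle of length $2$: two state circles $u,v$ joined by two crossing bands $e_1,e_2$. Since the cycle is even, the two half-twisted bands combine to a full twist, so the corresponding subsurface $A$ of $F_x$ is a Hopf band. I would then form a new state $y$ by re-resolving both $e_1$ and $e_2$: re-resolving one of them merges $u$ and $v$, and re-resolving the second splits the merged circle, so that $|y|=|x|$ and $F_y$ is again a minimal-complexity state surface. The crux is to analyze $F_y$. The dichotomy I expect is that re-resolving the Hopf band either introduces a loop or an odd cycle at the affected circles in $G_y$ --- so that $F_y$ is nonorientable --- or else merely reproduces the oppositely-twisted Hopf band, which happens exactly when $A$ is split off by a sphere meeting $L$ in two points, i.e. when $L$ has a Hopf link connect summand.

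In the first case, $F_y$ is a nonorientable minimal-complexity state surface, so Theorem~\ref{T:AK61} forces $\gamma(L)=\Gamma(L)$, contradicting $\gamma(L)=\Gamma(L)+1$. In the second case, $L$ has a Hopf link connect summand, contradicting the hypothesis on $L$. Either way we reach a contradiction, so $G_x$ has no cycle of length $2$; combined with the parity conclusion, every cycle of $G_x$ has even length at least $4$. The main obstacle is exactly the local analysis in the previous paragraph: showing rigorously, in the reduced alternating planar setting, that re-resolving the two bands of the Hopf band yields an equal-complexity surface that is nonorientable \emph{unless} the Hopf band is a connect summand, and in particular detecting that nonorientability as a loop or odd cycle in $G_y$. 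This is the step where the hypotheses ``alternating,'' ``nontrivial,'' and ``no Hopf link connect summand'' all come into play, and where I would expect to spend most of the bookkeeping.
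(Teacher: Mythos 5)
Your overall strategy coincides with the paper's: the odd-cycle (parity) part is handled exactly as in the paper, via the equivalence between orientability of $F_x$ and bipartiteness of $G_x$, and your plan for excluding $2$-cycles --- switch both resolutions to get a state $y$ with $|y|=|x|$, hence an equal-complexity surface $F_y$, and show $F_y$ is nonorientable unless $L$ has a Hopf link connect summand --- is precisely the paper's Figure~\ref{Fi:Hopfmobius} argument. The parity half of your write-up is complete and correct.

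However, there is a genuine gap at the step you yourself flag as ``the main obstacle'': you never actually prove that $F_y$ is nonorientable in the absence of a Hopf link connect summand. The dichotomy you propose --- that re-resolving ``either introduces a loop or an odd cycle at the affected circles in $G_y$'' or else reproduces an oppositely twisted Hopf band --- is asserted, not established, and as stated it is not a local matter: whether $G_y$ acquires an odd cycle depends on how the circles elsewhere in the state reattach to the two new circles $u',v'$ obtained by merging and re-splitting $u$ and $v$, which is exactly the bookkeeping you defer. The paper's mechanism avoids this combinatorics entirely and is where the ``no Hopf link connect summand'' hypothesis enters: cutting $F_x$ through the two crossing bands yields a \emph{connected} surface (if it disconnected, the two bands would form a Hopf band connect summand of $F_x$ and $L$ would have a Hopf link summand); connectivity produces a simple closed curve $\gamma\subset F_x$ passing exactly once through one of the two bands and missing the other; since $F_x$ is $2$-sided, $\gamma$ has annular neighborhood (integer framing); and the corresponding curve $\gamma'\subset F_y$ has framing differing by $\pm\tfrac12$, hence a M\"obius neighborhood, so $F_y$ is $1$-sided and Theorem~\ref{T:AK61} gives $\gamma(L)=\Gamma(L)$. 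To complete your proof you need to supply this framing argument (or an honest combinatorial substitute for it); without it the $2$-cycle case is only a plan, not a proof.
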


\begin{proof}
We will prove the contrapositive: if $G_x$ has an odd cycle or a cycle of length 2, then $\gamma(L)=\Gamma(L)$.  If $G_x$ has an odd cycle, then $F_x$ is 1-sided, so $\gamma(L)\leq \beta_1(F_x)=\Gamma(L)\leq \gamma(L)$, hence $\gamma(L)=\Gamma(L)$.

Assume instead that $G_x$ has no odd cycles (so $F_x$ is 2-sided) and contains a cycle of length 2.  The edges in this cycle correspond to crossings $c$ and $c'$, and the crossing bands at $c$ and $c'$ in $F_x$ join the same pair of (disks bounded by) state circles.  Let $y$ denote the state of $D$ that is identical to $x$, except at $c$ and $c'$ where it has the opposite smoothings, and let $F_y$ denote its state surface.  Observe that $x$ and $y$ have the same number of state circles, so $\beta_1(F_x)=\beta_1(F_y)$. See Figure~\ref{Fi:Hopfmobius}.

\begin{figure}
\begin{center}
\includegraphics[width=.6\textwidth]{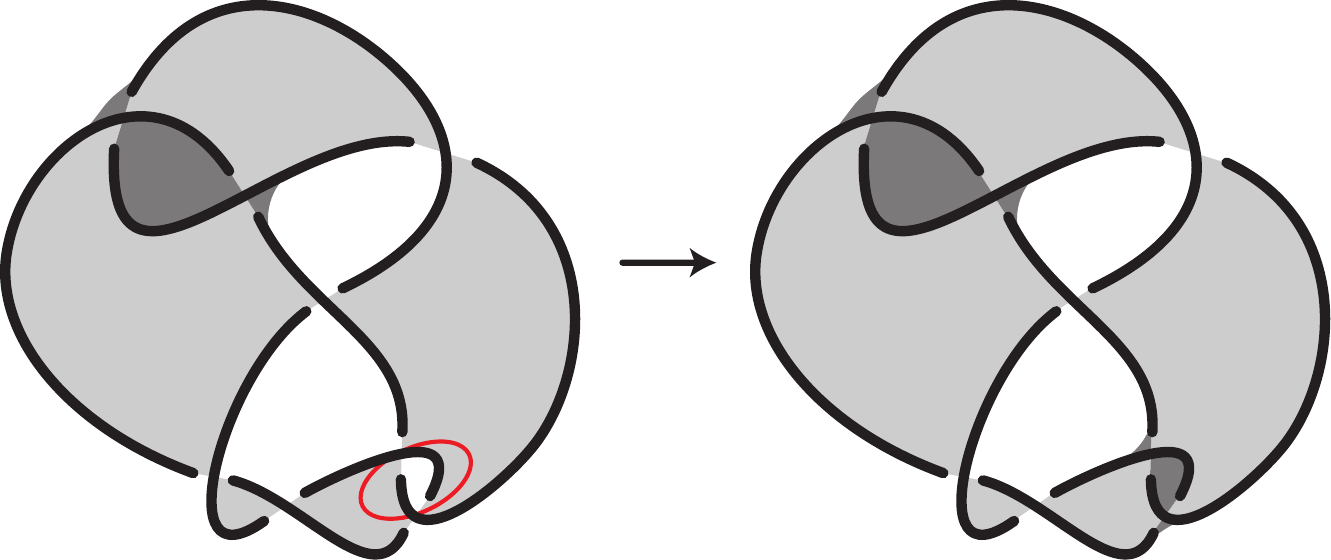}
\caption{If the state graph $G_x$ of a 2-sided state surface $F_x$ (with no Hopf band boundary connect summands) has a cycle of length two,  then reversing two smoothings gives a 1-sided state surface $F_y$ with $\beta_1(F_y)=\beta_1(F_x)$.}
\label{Fi:Hopfmobius}
\end{center}
\end{figure}

Note that the surface obtained from $F_x$ by cutting through these two crossing bands is connected, or else $L$ would have a Hopf link connect summand.  Therefore, there is a simple closed curve $\gamma$ in $F_x$ that passes exactly once through one of the two crossing bands in question and is disjoint from the other crossing band. There is an associated simple closed curve $\gamma'$ in $F_y$.  The neighborhood of $\gamma$ in $F_x$ is annular because $F_x$ is 2-sided, and the framing of $\gamma'$ in $F_y$ differs from that of $\gamma$ in $F_x$ by $\pm\frac{1}{2}$, and so $F_y$ is 1-sided, giving:
\[\pushQED\qed
\gamma(L)\leq \beta_1(F_y)=\beta_1(F_x)=\Gamma(L)\leq \gamma(L).\qedhere\]
\end{proof}

We now have the tools we need to prove our first main result.

\begin{proof}[Proof of Theorem \ref{T:CC}]
One direction is precisely Lemma~\ref{L:HopfBand}.  Conversely, if every cycle in $G_x$ has an even length of at least 4, then $F_x$ is the unique minimal-complexity state surface from $D$ by Lemma~\ref{L:UseDual}, and $F_x$ is 2-sided, so $\gamma(L)=\Gamma(L)+1$ by Theorem \ref{T:AK61}.
\end{proof}

Lemmas~\ref{L:UseDual} and \ref{L:HopfBand} similarly give the following characterization:

\begin{theorem}\label{T:2BridgeCCNew}
Suppose that a diagram $D$ of a 2-bridge link $L$ corresponds to the all-even subtractive continued fraction $[e_1,%e_2,
\hdots, e_k]_-$, and assume that $D$ has more than two crossings. If $L$ has two components, assume further that, between the two all-even subtractive continued fractions for $L$, $[e_1,%e_2,
\hdots, e_k]_-$ has minimal length. 
Then the following statements are equivalent:
\begin{enumerate}
\item $D$ has a unique minimal complexity state surface $F_x$, and $F_x$ is 2-sided.
\item $\gamma(L)= \Gamma(L)+1$.
\item Each $|e_i|\geq 4$.
\end{enumerate}%}
\end{theorem}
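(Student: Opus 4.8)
The plan is to establish the cyclic chain $(3)\Rightarrow(1)\Rightarrow(2)\Rightarrow(3)$, mimicking the proof of Theorem~\ref{T:CC} but run on the state $y$ of $D$ whose state surface $F$ is the Hatcher--Thurston surface associated to the all-even continued fraction $[e_1,\dots,e_k]_-$. The essential first step is to describe the state graph $G_y$. Since every $e_i$ is even, each band $F_i$ is an \emph{annulus}, so $\chi(F_i)=0$ forces $F_i$ to contribute $|e_i|$ state circles joined in a single cycle $C_i$ of length $|e_i|$; the plumbing pictures of \textsection\ref{S:HT} show that $C_i$ and $C_{i+1}$ share exactly one edge when $e_i,e_{i+1}$ have the same sign and share exactly one vertex otherwise (consistent with the crossing count in~\eqref{E:subformcrossingnum}). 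Because $F$ is orientable it is $2$-sided and every cycle of $G_y$ is even. A direct check on this ``chain of cycles'' then gives the key claim: $G_y$ contains a cycle of length $2$ if and only if some $|e_i|=2$; equivalently, every cycle of $G_y$ has even length at least $4$ if and only if every $|e_i|\ge4$.

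Granting the key claim, $(3)\Rightarrow(1)$ is immediate: if every $|e_i|\ge 4$ then every cycle of $G_y$ has length at least $4$, so Lemma~\ref{L:UseDual} shows $F$ is the \emph{unique} minimal-complexity state surface from $D$ (so $F$ is the surface $F_x$ of statement~(1)), and $F$ is $2$-sided. For $(1)\Rightarrow(2)$, Theorem~\ref{T:AK61} gives $\Gamma(L)=\beta_1(F_x)$; since $F_x$ is the unique minimal-complexity state surface and is $2$-sided, no minimal-complexity state surface is nonorientable, so $\gamma(L)=\beta_1(F_x)+1=\Gamma(L)+1$.

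For $(2)\Rightarrow(3)$ I would argue through Lemma~\ref{L:HopfBand}, which needs $F$ to realize $\Gamma(L)$. Two hypotheses do the work here. First, since $L$ is a nontrivial $2$-bridge link with more than two crossings, it is prime and is not the Hopf link, hence has no Hopf link connect summand and Lemma~\ref{L:HopfBand} applies. Second, the orientable state surfaces from $D$ are exactly those arising from all-even subtractive continued fractions, and by the minimal-length hypothesis in the two-component case (and uniqueness of the all-even form in the knot case) $F$ has the smallest $\beta_1$ among them. Thus if $\gamma(L)=\Gamma(L)+1$, then by Theorem~\ref{T:AK61} every minimal-complexity state surface is orientable, so some orientable state surface attains $\beta_1=\Gamma(L)$; minimality of $\beta_1(F)$ among orientable surfaces forces $\beta_1(F)=\Gamma(L)$. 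Lemma~\ref{L:HopfBand} applied to $y$ now shows every cycle of $G_y$ has even length at least $4$, and the key claim converts this into statement~$(3)$.

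The step I expect to be the main obstacle is the key structural claim about $G_y$: verifying from the band-and-plumbing figures of \textsection\ref{S:HT} that each annulus $F_i$ really contributes a cycle of length exactly $|e_i|$, that same-sign plumbings merge adjacent cycles along a single shared edge, and that consequently the only mechanism producing a length-$2$ cycle is the presence of a Hopf band $e_i=\pm2$. A secondary subtlety, easy to overlook, is the minimal-complexity argument for $F$ in $(2)\Rightarrow(3)$, which rests on identifying orientable state surfaces with all-even continued fractions together with the minimal-length normalization for two-component links.
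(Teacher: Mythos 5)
Your proposal is correct and follows essentially the same route as the paper: equivalence of (1) and (2) via Theorem~\ref{T:AK61}, $(3)\Rightarrow(1)$ via Lemma~\ref{L:UseDual}, and $(2)\Rightarrow(3)$ via Lemma~\ref{L:HopfBand}, all applied to the Hatcher--Thurston state surface of $[e_1,\hdots,e_k]_-$. You additionally spell out two points the paper leaves implicit --- the ``chain of cycles'' structure of the state graph (so that a $2$-cycle occurs exactly when some $|e_i|=2$) and the use of the minimal-length normalization to guarantee $\beta_1(F_x)=\Gamma(L)$ before invoking Lemma~\ref{L:HopfBand} --- and both are handled correctly.
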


\begin{proof}
{Note that $D$ is alternating, so (1) and (2) are equivalent by Theorem~\ref{T:AK61}. Note also that $L$ is prime. Assume that $D$ has more than two crossings (otherwise $L$ is a Hopf link and all three statements are false).  For the purpose of using Lemmas~\ref{L:UseDual} and \ref{L:HopfBand}, let $F_x$ be the Hatcher-Thurston surface associated to  $[e_1,%e_2,
\hdots, e_k]_-$, and let $G_x$ be its state graph.  If (3) holds, then Lemma~\ref{L:UseDual} implies (1). If (2) holds, then Lemma~\ref{L:HopfBand} implies (3).}
\end{proof}

\subsection{Formulas for unoriented genus and crosscap number}\label{S:Formulas}
{The following two lemmas describe how the unoriented genus of a 2-bridge link 
{is affected by} 
small changes to the continued fraction describing the link.}
We write $\Gamma[b_1,\ldots,b_k]_-$ for the unoriented genus $\Gamma(L[b_1,\ldots,b_k]_-)$ of the associated link.

\begin{lemma}\label{L:Rule1}
If  $D$ corresponds to the positive subtractive continued fraction {$[b_1,\hdots, b_k]_-$ with 
$b_k\geq 3$, then 
\[\Gamma[b_1,\hdots,b_{k}]_-=1+\Gamma[b_1,\hdots,b_{k-1}]_-.\]}
\end{lemma}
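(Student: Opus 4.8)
The plan is to compute $\Gamma$ via a count of state circles and reduce the claim to a single statement about how the final band is resolved. Recall from Theorem~\ref{T:AK61} together with the identity $\beta_1(F_x)=1+c-|x|$ that, if $M(D)$ denotes the maximal number of state circles over all Kauffman states of a diagram $D$ and $c(D)$ its crossing number, then $\Gamma(L[b_1,\dots,b_k]_-)=1+c(D)-M(D)$. Let $D$ be the reduced alternating diagram for $[b_1,\dots,b_k]_-$ and $D'$ the one for $[b_1,\dots,b_{k-1}]_-$. For $k\geq 2$, Equation~\eqref{E:subformcrossingnum} gives $c(D)=c(D')+(b_k-1)$, since appending $b_k$ adds $b_k-1$ crossings, one crossing being shared with the previous band. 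Subtracting the two expressions for $\Gamma$ then shows that the desired identity $\Gamma[b_1,\dots,b_k]_-=1+\Gamma[b_1,\dots,b_{k-1}]_-$ is equivalent to
\[
M(D)=M(D')+(b_k-2).
\]
The case $k=1$ is handled separately and is elementary: $L[b_1]_-=L(1/b_1)$ is the $(2,b_1)$-torus link, which bounds a M\"obius band or annulus, so $\Gamma[b_1]_-=1$.

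For the inequality $M(D)\geq M(D')+(b_k-2)$ I would argue by an explicit construction. Because $b_k\geq 3$, the last band carries a twist region of $b_k-1\geq 2$ crossings that are not shared with $F_{k-1}$. Smoothing all of these crossings in the ``turnback'' direction caps off the last band, recovering the diagram $D'$, while inserting a nested chain of exactly $b_k-2$ new state circles between consecutive crossings of the twist region. Overlaying a state of $D'$ that realizes $M(D')$ then yields a state of $D$ with $M(D')+(b_k-2)$ state circles. It is here that the hypothesis $b_k\geq 3$ first enters: a single unshared crossing, which is the case $b_k=2$, is not a turnback-able twist but instead splits a circle, which is precisely why the count $b_k-2$ fails for a trailing $2$.

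The reverse inequality $M(D)\leq M(D')+(b_k-2)$, equivalently $\Gamma[b_1,\dots,b_k]_-\geq\Gamma[b_1,\dots,b_{k-1}]_-+1$, is the crux and the step I expect to be the main obstacle, since it is an optimality statement over \emph{all} states rather than the analysis of one. My approach would be to pass to the continued-fraction description of state surfaces from Subsection~\ref{S:HT}: every state surface of $D$ is a plumbing $F_1*\cdots*F_m$ of unknotted annuli and M\"obius bands, so its complexity equals the length $m$ of the associated subtractive continued fraction of $p/q$ or $(p-q)/q$, each band contributing $1$ to $\beta_1$. Thus $\Gamma$ equals the minimal length of an allowed subtractive representation (all $|c_i|\geq 2$) of the relevant class, and I must show that a shortest such representation for the class of $D$ can be truncated, by deleting one band, to an allowed representation for the class of $D'$. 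The content is that when $b_k\geq 3$ the last band is essential and cannot be absorbed, in sharp contrast to a Hopf band ($b_k=2$), which can be slid off so as to shorten the fraction; establishing this non-absorbability — and thereby ruling out a state of $D$ that exploits the shared crossing to gain the extra circle that does occur when $b_k=2$ — is the heart of the argument.
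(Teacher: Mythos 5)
Your reduction to $M(D)=M(D')+(b_k-2)$ and your construction of a state achieving $M(D)\geq M(D')+(b_k-2)$ are correct and match the paper's construction exactly: smoothing the $b_k-1$ unshared crossings of the last twist region produces $b_k-2$ new state circles and leaves the diagram for $[b_1,\dots,b_{k-1}]_-$, giving $\Gamma[b_1,\dots,b_k]_-\leq 1+\Gamma[b_1,\dots,b_{k-1}]_-$. But the reverse inequality, which you yourself identify as ``the crux,'' is never actually proved: you sketch a strategy via the Hatcher--Thurston classification and then state that ``establishing this non-absorbability \dots is the heart of the argument'' without carrying it out. That route would require showing that the minimal length of a subtractive continued fraction expansion (all coefficients of absolute value at least $2$) of the class of $[b_1,\dots,b_k]_-$ exceeds that of $[b_1,\dots,b_{k-1}]_-$ by exactly one when $b_k\geq 3$ --- a nontrivial number-theoretic/combinatorial statement about continued fractions in the spirit of Hirasawa--Teragaito, and considerably more work than the lemma itself. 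It also needs the additional (true but unargued) fact that a complexity-minimizing spanning surface is isotopic to one of the Hatcher--Thurston plumbings. As written, the proposal is an incomplete proof.

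The paper closes this gap with no new argument at all: it invokes Theorem~\ref{T:AKalgo} (Adams--Kindred's Theorem 3.3 and Corollary 6.1), which guarantees that the greedy Procedure~\ref{Proc:AK 2Bridge} of smoothing around bigons and 1-gons \emph{always} produces a state surface of minimal complexity, i.e.\ the optimality over all states is already packaged in a cited result. Since the procedure may begin at any bigon, starting at the right end of $D$ resolves $b_k-1$ crossings, creates $b_k-2$ state circles, and reduces to the diagram for $[b_1,\dots,b_{k-1}]_-$; the complexity count $\beta_1(F_x)=1+c-|x|$ then immediately gives the recursion in both directions. You had this tool available --- Theorem~\ref{T:AKalgo} is stated in the same section --- and using it would have turned your one-sided construction into a complete proof.
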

\begin{proof}
{Because $b_k\geq 3$, there is a bigon at the (right) end}
of $D$ with which we can begin Procedure~\ref{Proc:AK 2Bridge}. Smoothing around this bigon and any resulting 1-gons resolves {$b_k-1$} crossings and gives {$b_k-2$} state circles together with {the diagram that corresponds (after a planar isotopy)} to {$[b_1,\hdots,b_{k-1}]$}. See the top row of Figure~\ref{Fi:Rule13}.
\end{proof}

\begin{figure}
\begin{center}
\includegraphics[height=.64in]{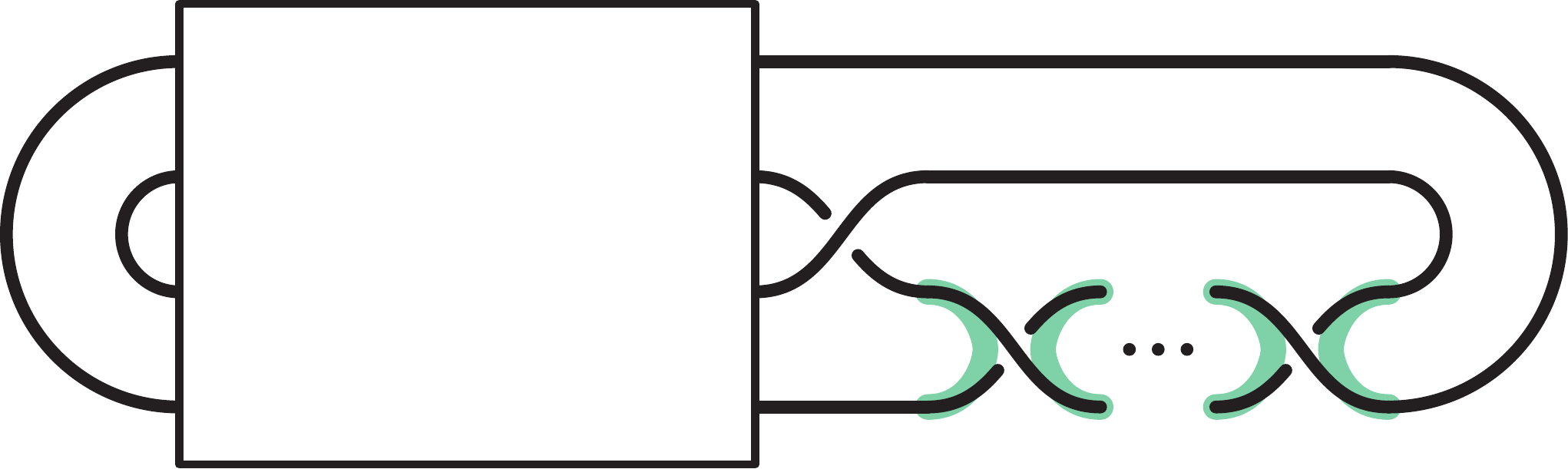}~\raisebox{.32in}{$\to$}~\includegraphics[height=.64in]{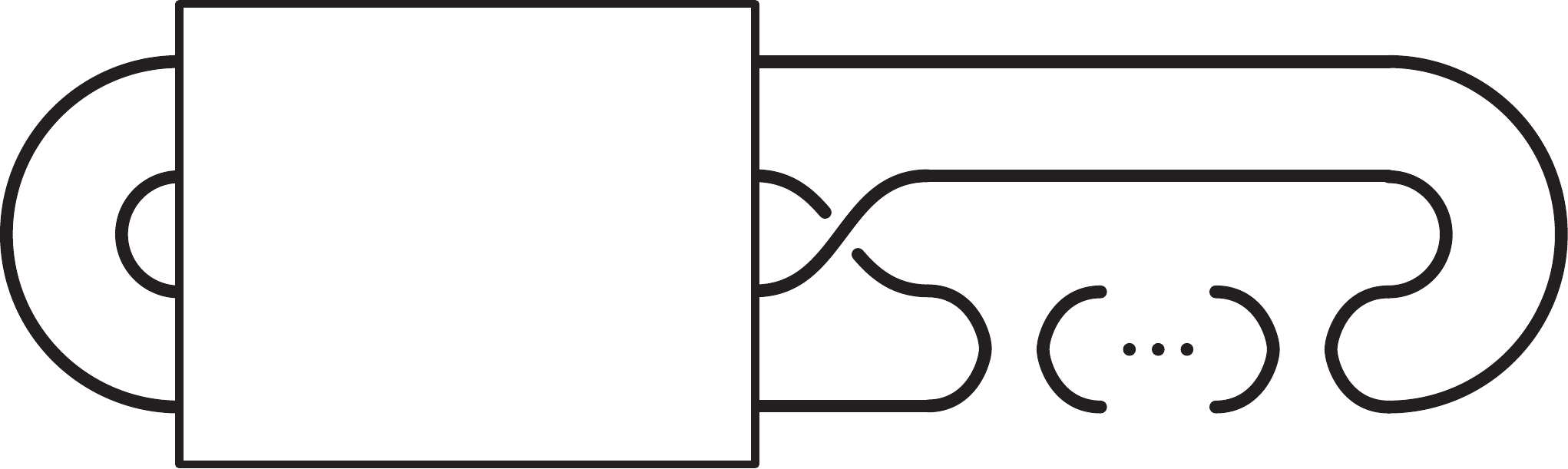}~\raisebox{.32in}{$\to$}~\includegraphics[height=.64in]{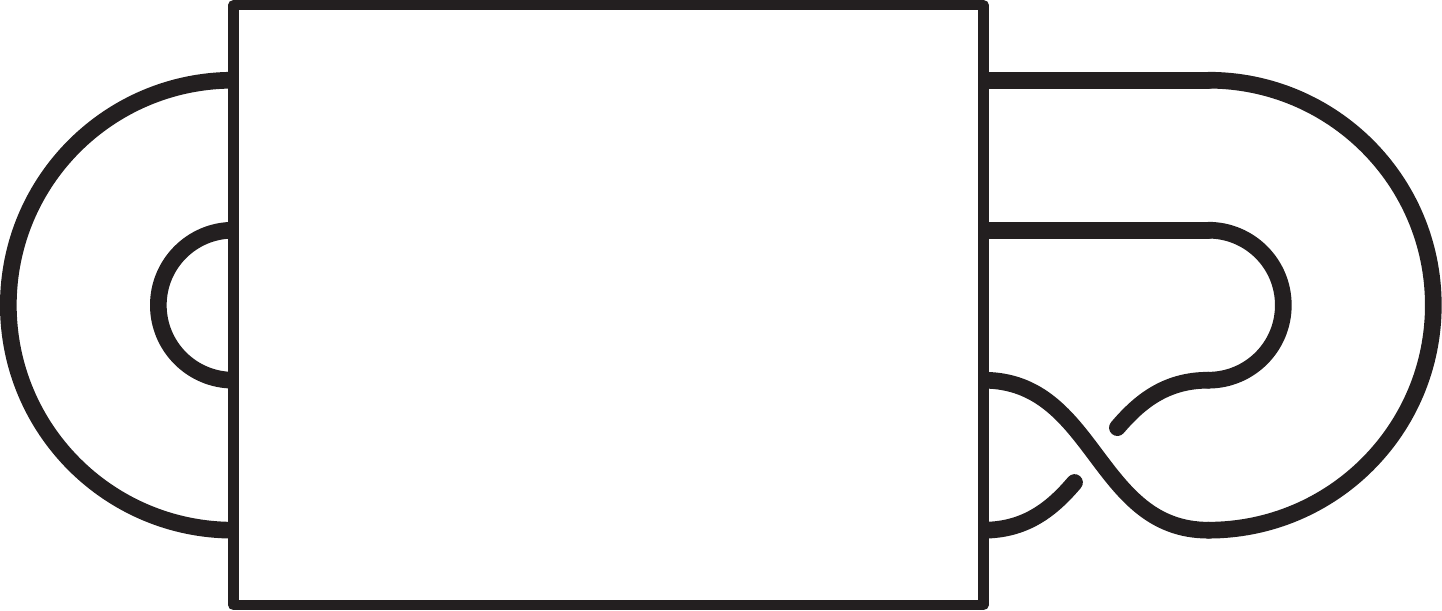}\\
\includegraphics[height=.64in]{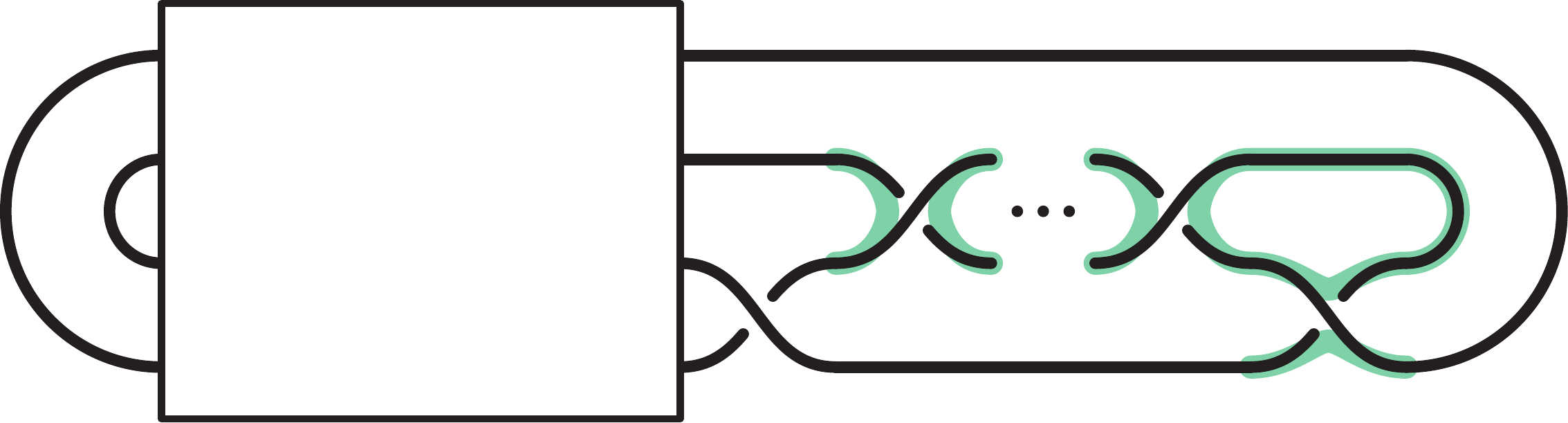}~\raisebox{.32in}{$\to$}~\includegraphics[height=.64in]{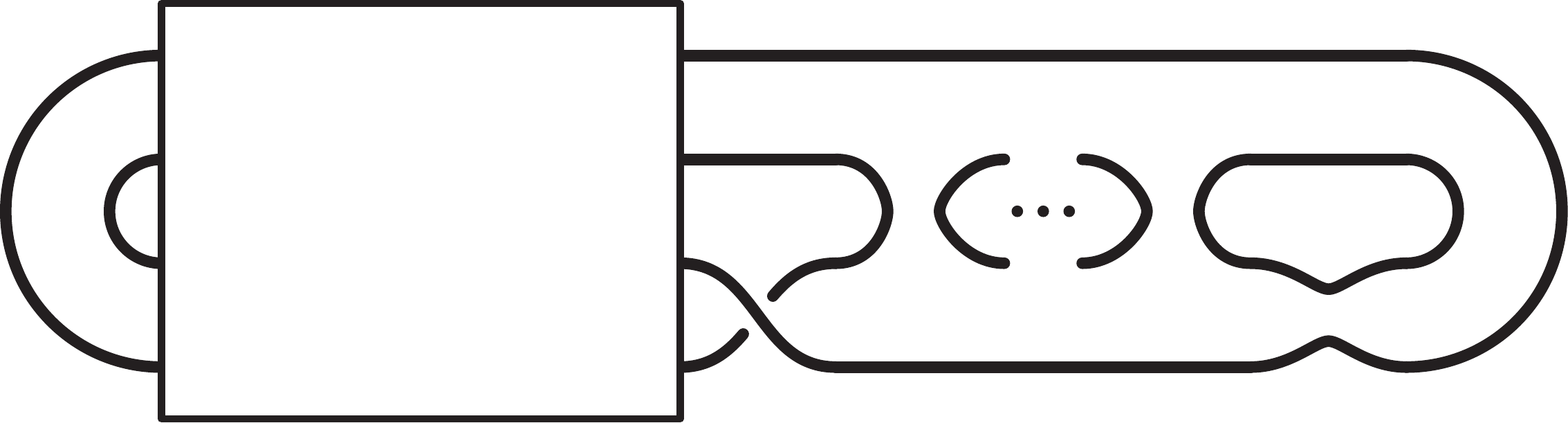}\\
\caption{The situations in Lemmas~\ref{L:Rule1} and \ref{L:Rule3}.
}
\label{Fi:Rule13}
\end{center}
\end{figure}

\begin{lemma}\label{L:Rule3}
Suppose $D$ corresponds to the positive subtractive continued fraction {$[b_1,\hdots, b_k]_-$}.  If for some $j$ we have $b_j\geq3$ 
and $b_i=2$  for all {$j<i\leq k$}, then
\[{\Gamma[b_1,\hdots,b_k]_-=\Gamma[b_1,\hdots,b_j,2,\hdots,2]_-=1+\Gamma[b_1,\hdots,b_{j-1},b_{j}-1]_-.}\]
\end{lemma}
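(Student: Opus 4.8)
The plan is to set up a diagram that corresponds to the positive subtractive continued fraction $[b_1,\hdots,b_k]_-$ where the tail $b_{j+1}=\cdots=b_k=2$ and $b_j\geq 3$, and to run Procedure~\ref{Proc:AK 2Bridge} starting from the far-right end of the diagram, mirroring the approach in Lemma~\ref{L:Rule1}. Since the last entry equals $2$, the rightmost twist region contributes a single crossing, and the relevant small face of $S^2\setminus D$ is a bigon (or a $1$-gon after successive smoothings). First I would argue that smoothing this rightmost bigon, together with any $1$-gons it creates, peels off the entire tail of $2$'s one band at a time, each smoothing producing exactly one new state circle while resolving exactly one crossing, and leaving a diagram that corresponds (after planar isotopy) to the shortened continued fraction. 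The goal is to show that this process converts $[b_1,\hdots,b_j,2,\hdots,2]_-$ into $[b_1,\hdots,b_{j-1},b_j-1]_-$ while reducing $\beta_1$ by exactly $1$.

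The key bookkeeping step is to track the change in state circles versus crossings resolved, since $\beta_1(F_x)=1+c-|x|$ by the formula in the Background section. Each time we smooth around a bigon in the tail of $2$'s, we resolve one crossing and create one state circle, so $\beta_1$ is unchanged by these smoothings on their own; the genus-dropping contribution must come from collapsing the tail into the $b_j$ region. I would make this precise by using the crossing-number formula \eqref{E:subformcrossingnum}: the diagram for $[b_1,\hdots,b_j,2,\hdots,2]_-$ (with, say, $m=k-j$ trailing $2$'s) has $\sum_{i=1}^j b_i + 2m - (k-1)$ crossings, whereas $[b_1,\hdots,b_{j-1},b_j-1]_-$ has $\sum_{i=1}^{j-1}b_i + (b_j-1) - (j-2)$ crossings, and I would reconcile the count of resolved crossings and created state circles against the difference $1$ predicted by the claimed genus drop. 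Equivalently, and perhaps more cleanly, I would show geometrically that after smoothing all $m$ trailing bigons/$1$-gons, one is left with exactly the Hatcher--Thurston/checkerboard diagram for $[b_1,\hdots,b_{j-1},b_j-1]_-$, so that by Theorem~\ref{T:AKalgo} the resulting surface realizes $\Gamma[b_1,\hdots,b_{j-1},b_j-1]_-$, and comparing $\beta_1$ before and after the smoothings gives the additive $+1$.

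The first equality in the statement, $\Gamma[b_1,\hdots,b_k]_-=\Gamma[b_1,\hdots,b_j,2,\hdots,2]_-$, is merely a notational restatement emphasizing that the tail after index $j$ consists of $2$'s, so no argument is needed there beyond recalling the hypothesis. I would therefore concentrate the whole proof on the second equality. The natural reference is the lower row of Figure~\ref{Fi:Rule13}, which illustrates exactly the local move of absorbing a trailing $2$ into the preceding band; I would appeal to this figure for the geometric picture and argue that iterating it $m$ times accomplishes the full reduction.

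The main obstacle I anticipate is verifying precisely what diagram results after smoothing the string of $2$'s, and in particular confirming that the $b_j$ band becomes a $b_j-1$ band rather than, say, being deleted or merging incorrectly with its neighbor. The subtlety is that the tail of $2$'s attaches to the $b_j$ region, and when the last crossing of the $b_j$ twist region interacts with the smoothing of the adjacent $2$-bigon, one crossing of the $b_j$ region is effectively consumed, which is exactly the source of the $b_j\mapsto b_j-1$ reduction and of the single drop in $\beta_1$. Getting this local identification right, and confirming that the net effect over the entire tail is one unit of $\beta_1$ rather than zero or $m$, is where I would spend the most care; the cleanest route is probably to peel the tail down to the single crossing adjoining the $b_j$ region, reducing to the $b_k=2$, $k=j+1$ base case, and then handle that base case by the explicit local picture in Figure~\ref{Fi:Rule13}.
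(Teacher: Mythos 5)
Your proposal matches the paper's proof: the paper runs Procedure~\ref{Proc:AK 2Bridge} on the $k-j$ trailing bigons, observing that this smooths $k-j+1$ crossings while producing $k-j$ state circles and leaves the diagram corresponding to $[b_1,\hdots,b_{j-1},b_j-1]_-$, so $\beta_1$ increases by exactly $1$ (the first equality being, as you say, purely notational). Your bookkeeping --- one crossing and one state circle per trailing $2$, plus the single extra crossing consumed from the $b_j$ region --- yields the same net count of $k-j+1$ crossings against $k-j$ circles, and the bottom row of Figure~\ref{Fi:Rule13} is exactly the picture the paper invokes.
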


\begin{proof}
Resolving the {$k-j$} bigons at the {(right) end} of the diagram involves smoothing {$k-j+1$} crossings and yields {$k-j$} state circles and the diagram corresponding to 
{$[b_1,\hdots,b_{j-1},b_{j}-1]_-$}. See the bottom row of  Figure~\ref{Fi:Rule13}.
\end{proof}

{Lemmas~\ref{L:Rule1} and \ref{L:Rule3} imply the following theorem.}

\begin{theorem}\label{T:2BridgeGamma} If a diagram $D$ of a 2-bridge link $L$ corresponds to the positive subtractive continued fraction $[b_1,%b_2,
\hdots, b_k]_-$% with each $b_i\geq 2$
, then $\Gamma(L)=w-z$, where:
\begin{itemize}
\item $w=\#\{i~:b_i=2\text{ and either }i=1\text{ or }b_{i-1}\neq2\}+\#\{i:~b_i\geq3\}$ and 
\item $z=\#\{i:~\text{for some }j\geq 0,~b_i=3=b_{i+1}=\cdots=b_{i+j}\text{ and }b_{i-1}=2=b_{i+j+1}\}$. 
\end{itemize}
Note that the first quantity in $w$ counts the number of strings of 2's in $[b_1,\hdots,b_k]_-$, and that $z$ counts the number of strings of 3's in $[b_1,\hdots,b_k]_-$ that are immediately preceded {\it and} succeeded by 2's.
\end{theorem}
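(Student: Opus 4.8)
The plan is to prove Theorem~\ref{T:2BridgeGamma} by induction on $k$, the length of the positive subtractive continued fraction, using Lemmas~\ref{L:Rule1} and \ref{L:Rule3} to strip entries off the right-hand end of $[b_1,\hdots,b_k]_-$ while tracking the effect on both the geometric quantity $\Gamma(L)$ and the combinatorial quantity $w-z$. The two lemmas give us exactly two reduction moves: Lemma~\ref{L:Rule1} handles the case $b_k\geq 3$ (peel off the last entry, decreasing $\Gamma$ by $1$), and Lemma~\ref{L:Rule3} handles a terminal block of $2$'s following some $b_j\geq 3$ (collapse the block and decrement $b_j$, again decreasing $\Gamma$ by $1$). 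So the real content is purely bookkeeping: I must check that each move changes $w-z$ by exactly the same amount ($-1$) that it changes $\Gamma$, and confirm the base cases.

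First I would fix the combinatorial reading of $w-z$ stated in the theorem: $w$ counts (maximal) strings of $2$'s plus the number of entries that are $\geq 3$, and $z$ counts the maximal strings of $3$'s that are flanked on both sides by a $2$. For the induction step I would split into the two reduction cases. \textbf{Case $b_k\geq 3$ (Lemma~\ref{L:Rule1}).} Since $\Gamma$ drops by $1$, I need to show $w-z$ for $[b_1,\hdots,b_k]_-$ exceeds that of $[b_1,\hdots,b_{k-1}]_-$ by exactly $1$. Removing a terminal entry $b_k\geq 3$ deletes one ``$\geq 3$'' contribution to $w$, so $w$ decreases by $1$; I must then verify $z$ is unchanged. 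The subtle point is that $z$ counts strings of $3$'s flanked by $2$'s on \emph{both} sides, and a terminal $b_k$ has no successor, so it can never be the right end of a $z$-string nor lie interior to one; deleting it therefore cannot destroy or create a two-sided-flanked string of $3$'s. \textbf{Case $b_j\geq 3$, $b_i=2$ for $j<i\leq k$ (Lemma~\ref{L:Rule3}).} Here $\Gamma$ again drops by $1$, and the reduction replaces the tail $b_j,2,\hdots,2$ by the single entry $b_j-1$. I would compare $w-z$ before and after by carefully accounting for the terminal block of $2$'s (which contributes one string-of-$2$'s to $w$) and the fate of $b_j$ under decrementing, handling separately the subcases $b_j\geq 4$ (so $b_j-1\geq 3$ stays a ``$\geq 3$'' entry) and $b_j=3$ (so $b_j-1=2$ merges into the preceding run of $2$'s, if any). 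This last subcase is where the $z$ term earns its keep, and it is the crux of the argument.

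The main obstacle is precisely the interaction of the $z$ correction term with the reduction in Case~\ref{L:Rule3} when $b_j=3$. When $b_j=3$ collapses to a $2$, whether a new maximal string of $2$'s is created or an existing one extended depends on $b_{j-1}$, and simultaneously the presence of the terminal $2$-block together with $b_{j-1}=2$ is exactly the configuration that $z$ was defined to detect. I expect that the definition of $z$ was engineered so that the over- or under-counting of $w$ in this merge is corrected by a matching change in $z$, yielding a net change of $-1$ in $w-z$. To control this cleanly I would track, for each reduction, the quintuple of relevant local data $(b_{j-1}, b_j, \text{length of terminal $2$-block})$ and tabulate the change in each of the three counts (strings of $2$'s, entries $\geq 3$, and flanked strings of $3$'s). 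Carrying out this finite case analysis carefully — and checking the base case $k=1$, where $[b_1]_-$ gives either a single $2$ (unknot/trivial, $\Gamma=0$, matching $w=1$, $z=0$?) or $b_1\geq 3$ — is the routine but error-prone heart of the proof; the conceptual work is already done by the two lemmas.
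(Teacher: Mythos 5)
Your overall strategy is the same as the paper's: reduce from the right-hand end using Lemmas~\ref{L:Rule1} and \ref{L:Rule3}, and verify that each reduction decreases $w-z$ by exactly the $1$ that it decreases $\Gamma$, with the $b_j=3$, $b_{j-1}=2$ subcase of the Lemma~\ref{L:Rule3} reduction being where $z$ changes. The case analysis you outline (terminal $b_k\geq 3$; terminal block of $2$'s with $b_j\geq 4$ versus $b_j=3$, the latter split according to $b_{j-1}$) matches the paper's almost exactly; the only structural difference is that you induct on the length $k$ while the paper inducts on $w$.

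That difference, however, creates a genuine gap in your base case. Your induction gets stuck on the all-$2$'s tuples $(2,2,\dots,2)$ with $k\geq 2$: Lemma~\ref{L:Rule1} requires $b_k\geq 3$ and Lemma~\ref{L:Rule3} requires some $b_j\geq 3$, so neither reduction applies, and your stated base case $k=1$ does not cover them. You must treat them as additional base cases; they are the $(2,q)$ torus links $[2^{[k]}]_-=L(k/(k+1))$, which bound an annulus or M\"obius band, so $\Gamma=1=w-z$ (one maximal string of $2$'s, $z=0$). This is exactly why the paper inducts on $w$ with base case $w=1$, which absorbs both the all-$2$'s tuples and the single-entry tuples at once. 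Relatedly, your tentative claim that $[2]_-$ is the unknot with $\Gamma=0$ is wrong: $[2]_-=1/2$ is the Hopf link, with $\Gamma=1$, which does match $w-z=1-0$. With these base cases corrected, your plan goes through and is essentially the paper's proof.
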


\begin{proof}
We argue by (strong) induction on $w$.  If $w=1$, then 
$L$ is a $(2,q)$ torus link, hence bounds either an annulus or a M\"obius band. Also, $z=0$, so $\Gamma(L)=1=w-z$.

For the induction step, consider the {largest} $i$ for which $b_i\geq 3$.  If $i=k$, so $b_k\geq 3$, then observe that 
{$z[b_1,\hdots,b_k]=z[b_1,\hdots,b_{k-1}]$ and $w[b_1,\hdots,b_k]=w[b_1,\hdots,b_{k-1}]+1$.}
Thus, by Lemma~\ref{L:Rule1} and induction:
\begin{equation*}\begin{split}
\Gamma[b_1,\hdots,b_k]_-&=1+\Gamma{[b_1,\hdots,b_{k-1}]_-}\\
&=1+w{[b_1,\hdots,b_{k-1}]}-z{[b_1,\hdots,b_{k-1}]}\\
&=w[b_1,\hdots,b_k]-z[b_1,\hdots,b_k].
\end{split}
\end{equation*}
Otherwise, we have {$b_i>2$ and $b_{i+1}=\cdots=b_{k}=2$ for some $i<k$}. 
We split into {three cases. First, if $i=1$ and $b_i=3$, then $w[b_1,\hdots,b_k]=2$, $z[b_1,\hdots,b_k]=0$, and by Lemma~\ref{L:Rule3},} 
\begin{equation*}
{\Gamma[b_1,\hdots,b_k]_-=\Gamma[3,2\hdots,2]_-=1+\Gamma[2]_-=2=w[b_1,\hdots,b_k]-z[b_1,\hdots,b_k].}
\end{equation*}
{Second}, if $b_i=3$ and {$b_{i-1}=2$}, then $w[b_1,\hdots,b_k]=2+w{[b_1,\hdots,b_{i-1},b_i-1]}$ and $z[b_1,\hdots,b_k]=1+z{[b_1,\hdots,b_{i-1},b_i-1]}$, so Lemma~\ref{L:Rule3} and induction give
\begin{equation}\label{E:GammaXYZ}\begin{split}
\Gamma[b_1,\hdots,b_k]_-&=1+\Gamma{[b_1,\hdots,b_{i-1},b_i-1]_-}\\
&=1+w{[b_1,\hdots,b_{i-1},b_i-1]}-z{[b_1,\hdots,b_{i-1},b_i-1]}\\
&=w[b_1,\hdots,b_k]-z[b_1,\hdots,b_k].
\end{split}
\end{equation}

Otherwise, we have either $b_i=3$ and {$b_{i-1}>2$}, or $b_i\geq 4$.
{In both cases, one carefully checks that} $w[b_1,\hdots,b_k]=1+w{[b_1,\hdots,b_{i-1},b_i-1]}$ and $z[b_1,\hdots,b_k]=z{[b_1,\hdots,b_{i-1},b_i-1]}$, so Lemma~\ref{L:Rule3}, induction, and Equation~\eqref{E:GammaXYZ} complete the proof.
\end{proof}

Theorems~\ref{T:2BridgeCCNew} and \ref{T:2BridgeGamma}
immediately imply Theorem~\ref{thm:cc_formula}.

\section{Computing unoriented genus}
\label{sec:unoriented_genus}

In this section we use Theorem~\ref{T:2BridgeGamma} to determine formulas for the total unoriented genus of 2-bridge knots with a fixed crossing number $c$. Since there are several technical details that obscure the derivation, we begin with a brief outline of the approach.

\subsection{Overview}

If $p/q=[b_1,\dots,b_k]_-$ and $q$ is odd, the link $L[b_1,\dots,b_k]_-$ is a knot that we denote by $K[\bf b]_-$.  Let $\mathcal K_c$ be the set of all 2-bridge knots with crossing number $c$ and where mirror images are counted as distinct knots unless they are isotopic and let $K(c)$ be the set of all tuples or vectors $\mathbf b$ with integer entries $b_i \ge 2$ that give a knot in $\mathcal K_c$ via the identification $\mathbf b \mapsto K[\bf b]_{-}$. From Equation\eqref{E:subformcrossingnum} it follows that
\begin{equation*}\label{E:defnK(c)}
K(c) = \left\{(b_1,%b_2,
\dots,b_k): b_i \ge 2,\ K[\mathbf b]_- \mbox{ is a knot, and } c = \sum_{i=1}^k b_i - (k-1) \right\}. 
\end{equation*}

Most knots are represented by two vectors $(b_1, \dots , b_k)$ and $(b_k, \dots , b_1)$ in $K(c)$, 
although knots represented by palindromic tuples are represented by exactly one vector. 
From this it follows that
\[
|\mathcal K_c| = \frac{1}{2} \left(|K(c)| + |K^P(c)|\right)
\]
where $K^P(c) \subset K(c)$ is the subset of palindromic vectors that represent knots.
From Theorem~\ref{T:2BridgeGamma}, we have $\Gamma(K[ \mathbf b]_-) = w([\mathbf b]_-) - z([\mathbf b]_-)$ for any $\mathbf b \in K(c)$. 
Moreover, because the values of $w$ and $z$ are identical for $(b_1,\dots, b_k)$ and $(b_k, \dots, b_1)$, we will abuse notation and write $w(K)$ or $z(K)$ for $K \in \mathcal K_c$, avoiding reference to the choice of vector $\mathbf b$. Using this notation, the average unoriented genus for 2-bridge knots with crossing number $c$ is given by
\[
\overline{ \Gamma}(c) = \frac{1}{|\mathcal K_c|}\left(\sum_{K \in \mathcal K_c} (w(K)-z(K)) \right).
\]

The denominator in the average unoriented genus is given by the formula for $|\mathcal K_c|$ produced by Ernst and Sumners \cite{Ernst-Sumners:1987} given below.

\begin{theorem}[Ernst-Sumners] 
\label{T:Ernst-Sumners} The number of distinct 2-bridge knots with crossing number $c$ is given by
\begin{equation}\label{E:Ernst-Sumners}
|\mathcal K_c| = \left\{ \begin{array}{ll}
\displaystyle \frac{2^{c-2}-1}{3}, & \mbox{if $c$ even and $c \ge 4$,} \\
\\
\displaystyle \frac{2^{c-2}+2^{(c-1)/2}}{3}, & \mbox{if $c \equiv 1 \Mod{4}$ and $c \ge 5$,} \\
\\
\displaystyle \frac{2^{c-2}+2^{(c-1)/2}+2}{3}, & \mbox{if $c \equiv 3 \Mod{4}$ and $c \ge 7$.}
\end{array}
\right.
\end{equation}
\end{theorem}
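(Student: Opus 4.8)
The plan is to count reduced fractions directly, then quotient by Schubert's symmetry via Burnside's lemma. First I would set up a bijection between $2$-bridge links of crossing number $c$ and compositions of $c$. A vector $(a_1,\dots,a_n)$ with each $a_i\geq 1$ determines an additive continued fraction $p/q=[a_1,\dots,a_n]$ and a reduced alternating $4$-plat diagram whose crossing number is $\sum_i a_i$, so the relevant vectors are exactly the compositions of $c$, of which there are $2^{c-1}$. The identity $[a_1,\dots,a_{n-1},a_n]=[a_1,\dots,a_{n-1},a_n-1,1]$ defines a fixed-point-free involution on compositions of $c$ (toggling whether the last part equals $1$); the two members of each pair represent the same reduced fraction $p/q$ with $0<p<q$, and by uniqueness of the continued fraction expansion of fixed length parity, distinct pairs give distinct fractions. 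Hence there are exactly $F(c)=2^{c-2}$ reduced fractions $p/q$ with $0<p<q$ and crossing number $c$.

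Next I would introduce the symmetry. By Schubert's theorem, for fixed $0<p<q$ the only remaining identification among these fractions is $p\sim p^{-1}\pmod q$, realized on continued fractions by reversal $(a_1,\dots,a_n)\mapsto(a_n,\dots,a_1)$. This is an involution $\iota$ on the set of $F(c)$ fractions, and since mirror images ($p\mapsto q-p$) do not lie in Schubert's relation, the orbits of $\iota$ are precisely the $2$-bridge link types of crossing number $c$ counted with mirrors distinct. The knot types are those with $q$ odd, a subset preserved by $\iota$. Burnside's lemma applied to the $\iota$-action on odd-$q$ fractions then gives
\[ |\mathcal K_c| \;=\; \tfrac12\bigl(F_K(c)+P_K(c)\bigr), \]
where $F_K(c)$ is the number of odd-$q$ fractions of crossing number $c$ and $P_K(c)$ is the number of those fixed by $\iota$, i.e.\ with $p^2\equiv 1\pmod q$.

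The two remaining tasks are to evaluate $F_K(c)$ and $P_K(c)$. For $F_K(c)$ I would track the parity of the denominator $q$ through the continuant recurrence reduced modulo $2$; this is a finite-state count, and a short transfer-matrix or generating-function argument should produce the clean closed form $F_K(c)=\tfrac13\bigl(2^{c-1}+2(-1)^{c+1}\bigr)$, whose denominator $3$ is exactly the source of the $3$'s in the theorem. I expect this step to be routine. The count $P_K(c)$ is the crux: the $\iota$-fixed fractions correspond to palindromic (or trailing-$1$-shifted palindromic) compositions, which I would parametrize by their first half, giving the $2^{(c-1)/2}$-order growth, and then cut down by the odd-$q$ condition.

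The delicate point, and the main obstacle, is that the odd-$q$ restriction on these near-palindromes behaves differently according to $c\bmod 4$: it forces $P_K(c)=0$ when $c$ is even and produces the $+2$ versus $-2$ discrepancy between the $c\equiv 1$ and $c\equiv 3\pmod 4$ cases. Carefully enumerating the admissible half-palindromes subject to the correct parity and self-inverse constraints, and verifying that $P_K(c)$ equals $0$, $\tfrac13\bigl(2^{(c+1)/2}-2\bigr)$, and $\tfrac13\bigl(2^{(c+1)/2}+2\bigr)$ in the three respective cases, is where the real work lies. Substituting these values together with $F_K(c)$ into the Burnside formula then yields the three stated expressions, with the stated lower bounds on $c$ accounting for the small-crossing exceptions.
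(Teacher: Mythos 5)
First, note that the paper does not prove this statement: Theorem~\ref{T:Ernst-Sumners} is imported with a citation to Ernst and Sumners \cite{Ernst-Sumners:1987}, so there is no internal proof to compare against, and your outline is essentially a reconstruction of that external argument. The strategy is sound: the $2^{c-2}$ reduced fractions of crossing number $c$ via paired compositions, the restriction to odd denominators, and Burnside applied to the involution $p\mapsto p^{-1}\pmod q$ furnished by Schubert's classification all fit together correctly (in particular, orbits of $\iota$ on odd-denominator fractions do biject with elements of $\mathcal K_c$ under the paper's convention that mirrors are distinct unless amphichiral, since amphichirality is exactly the case $-p\in\{p,p^{-1}\}$). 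I also checked that your intermediate formulas $F_K(c)=\tfrac13\bigl(2^{c-1}+2(-1)^{c+1}\bigr)$ and the three claimed values of $P_K(c)$ recombine to give precisely \eqref{E:Ernst-Sumners}.

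However, as written this is a plan rather than a proof, and the gap you yourself flag is genuine. Neither enumeration is carried out: the transfer-matrix computation of $F_K(c)$ (tracking $q\bmod 2$ through the continuant recurrence) is plausible but unexecuted, and the count $P_K(c)$ of $\iota$-fixed fractions --- palindromic odd-length positive compositions of $c$ with odd continuant denominator --- is deferred entirely, even though it is the combinatorial heart of the theorem: it must be shown to vanish for $c$ even and to produce the $\pm2$ discrepancy between $c\equiv1$ and $c\equiv3\pmod 4$. A secondary unproved assertion is that every composition $(a_1,\dots,a_n)$ of $c$ yields a \emph{reduced} alternating $4$-plat, so that Tait's theorem gives crossing number exactly $\sum_i a_i$ even when some $a_i=1$; this is true but needs justification, since it is what guarantees that the fractions you count are exactly those of crossing number $c$. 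Until the $P_K(c)$ computation (and the parity argument behind it) is supplied, the proposal establishes the shape of the answer but not the theorem.
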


Therefore, we need only determine the appropriate formula for $\sum_{K \in \mathcal K_c} (w(K)-z(K))$ in order to calculate the average unoriented genus.
If we let 
\begin{equation}\label{E:WZ}
\begin{array}{lcl}
\displaystyle W(c) = \sum_{\mathbf b \in K(c)}w(K[\mathbf b]_-), & &  \displaystyle Z(c) = \sum_{\mathbf b \in K(c)} z(K[\mathbf b]_-), \\
\displaystyle W^P(c) = \sum_{\mathbf b \in K^P(c)} w(K[\mathbf b]_-), & & \displaystyle Z^P(c) = \sum_{\mathbf b \in K^P(c)} z(K[\mathbf b]_-),
\end{array}
\end{equation}
then
\[
\sum_{K \in \mathcal K_c} (w(K)-z(K)) = \frac{1}{2} \left( W(c) - Z(c) + W^P(c) - Z^P(c) \right). 
\]
In this section, we find explicit formulas for $W(c)$ and $Z(c)$ (Proposition~\ref{P:WZforumlas}), and $W^P(c)$ and $Z^P(c)$ (Proposition~\ref{P:WpZpformulas}). These formulas combined with Theorem~\ref{T:Ernst-Sumners} then provide the following closed formula for $\overline \Gamma(c)$, establishing the first part of Theorem~\ref{T:ccave}.

\begin{theorem}\label{T:Ep1}
Let $c \ge 11$ and set $d = \frac{c-1}{2}$. The average unoriented genus $\overline \Gamma(c)$ of a 2-bridge knot with crossing number $c$ is
\[
\overline \Gamma(c) = \frac{c}{3} + \frac{1}{9} + \varepsilon_1(c)
\]
where
\[
\varepsilon_1(c) = \left\{ \begin{array}{ll}
\displaystyle \frac{c-2 + 3\, \delta_{1,c~\mbox{\scriptsize mod}~3}}{3(2^{c-2}-1)},& \mbox{if $c$ even},\\
\\
\displaystyle \frac{(6d+3)2^{d+1} -4 -18 \,\delta_{2,d~\mbox{\scriptsize mod}~3}}{9(2^{c-2}-2^d)},& \mbox{if $c \equiv 1 \Mod{4}$}, \\
\\
\displaystyle \frac{(6d+3)2^{d+1} -6c-2 + 18\,  (\delta_{1,d~\mbox{\scriptsize mod}~3}+2\, \delta_{2,d~\mbox{\scriptsize mod}~3})}{9(2^{c-2}-2^d+2)},& \mbox{if $c \equiv 3 \Mod{4}$}.
\end{array}
\right.
\]
Since $\varepsilon_1(c) \rightarrow 0$ as $c \rightarrow \infty$, the average unoriented genus approaches $\frac{c}{3} + \frac{1}{9}$ as $c \rightarrow \infty$.
\end{theorem}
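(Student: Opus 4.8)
The plan is to reduce the average to the four totals $W(c),Z(c),W^P(c),Z^P(c)$ together with the count $|\mathcal K_c|$, and then carry out an exact algebraic simplification. Combining the two identities recorded just before the statement of the theorem — namely $\sum_{K\in\mathcal K_c}(w(K)-z(K))=\tfrac12\big(W(c)-Z(c)+W^P(c)-Z^P(c)\big)$ and $|\mathcal K_c|=\tfrac12\big(|K(c)|+|K^P(c)|\big)$ — the factors of $\tfrac12$ cancel, so that $\overline\Gamma(c)=\big(W(c)-Z(c)+W^P(c)-Z^P(c)\big)\big/\big(|K(c)|+|K^P(c)|\big)$. Everything therefore reduces to exact closed forms for the numerator totals (Propositions~\ref{P:WZforumlas} and \ref{P:WpZpformulas}) and for the denominator, whose value is supplied by Theorem~\ref{T:Ernst-Sumners} once $|K^P(c)|$ is known.

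The real content lies in Proposition~\ref{P:WZforumlas}. Using Equation~\eqref{E:subformcrossingnum}, I would identify $K(c)$ with the set of tuples $\mathbf b=(b_1,\dots,b_k)$ having each $b_i\ge2$ and $\sum_i b_i-(k-1)=c$ that additionally satisfy the parity condition making $L[\mathbf b]_-$ a knot; the substitution $b_i=a_i+1$ turns these into compositions of $c-1$ carrying a mod-$2$ constraint, which lets me build tuples one terminal entry (or terminal block) at a time. Because $w$ and $z$ from Theorem~\ref{T:2BridgeGamma} are determined by purely local data — maximal runs of $2$'s, entries $\ge3$, and runs of $3$'s flanked by $2$'s — I would track them with a finite-state (transfer-matrix) recursion whose state records the last entry (equal to $2$, equal to $3$, or $\ge4$), whether a $3$-run preceded by a $2$ is currently open, and the running parity needed for the knot condition. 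Summing $w$ and $z$ with these weights over all accepted tuples produces constant-coefficient linear recurrences in $c$; solving them yields closed forms for $W(c)$ and $Z(c)$ with a dominant $c\,2^{c}$ term and a subdominant $2^{c/2}$ term coming from the parity correction.

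Proposition~\ref{P:WpZpformulas} treats the palindromic totals. A palindromic tuple is determined by its first half, so $K^P(c)$ is governed by a recursion of roughly half the length, and $|K^P(c)|,W^P(c),Z^P(c)$ are of order $c\,2^{c/2}$ and $2^{c/2}$. I would run the same finite-state bookkeeping on half-tuples, taking care of the center (odd versus even length) and of how a central run of $2$'s or $3$'s contributes to $w$ and $z$ after reflection. The knot parity condition again forces a split according to $c\bmod4$, and this is precisely the origin of the three cases in the statement of Theorem~\ref{T:Ep1}, matching the three cases of Theorem~\ref{T:Ernst-Sumners}.

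Finally I would substitute the closed forms into the expression for $\overline\Gamma(c)$ above and perform the division to the order that isolates the polynomial-in-$c$ part $\tfrac c3+\tfrac19$, collecting the remaining exact lower-order terms into $\varepsilon_1(c)$, whose three-case shape is inherited from the residue classes. For the limit, the denominator $|K(c)|+|K^P(c)|=2|\mathcal K_c|$ is $\sim\tfrac13 2^{c-1}$ while the numerator corrections beyond the main term are $O(c\,2^{c/2})$, so $\varepsilon_1(c)=O(c\,2^{-c/2})\to0$. I expect the main obstacle to lie in the second and third paragraphs, specifically in getting the recursion for $z$ exactly right: since $z$ counts a two-sided pattern (a maximal run of $3$'s bounded by $2$'s on both sides), the state machine must retain enough context to avoid off-by-one errors at the ends of tuples and at the palindrome center, and the knot-versus-link parity must be threaded consistently through every case. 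Once the exact closed forms are secured, the concluding algebra and the limit are routine.
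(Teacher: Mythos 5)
Your reduction of the theorem to closed forms is exactly the paper's proof: the paper's argument for Theorem~\ref{T:Ep1} consists precisely of the identity $\overline\Gamma(c)=\frac{W(c)-Z(c)+W^P(c)-Z^P(c)}{2|\mathcal K_c|}$ (which follows from Theorem~\ref{thm:cc_formula} together with the palindrome count) followed by substitution of the closed forms from Theorem~\ref{T:Ernst-Sumners} and Propositions~\ref{P:WZforumlas} and~\ref{P:WpZpformulas}. Where you diverge is in how you propose to establish those propositions. The paper partitions $K(c)$ by terminal block into $K_{22}(c)\sqcup K_{\bar32}(c)\sqcup K_3(c)\sqcup K_{\bar4}(c)$ and builds explicit bijections $f_1,\dots,f_4$ (and palindrome-preserving analogues $p_1,\dots,p_4$) onto sets of smaller crossing number; knot-ness is preserved automatically by these bijections, and the increments $\Delta w,\Delta z$ are computed case by case, yielding Jacobsthal-type recurrences such as $W(c)=W(c-1)+2W(c-2)+3\cdot2^{c-5}$. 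Your transfer-matrix formulation---with a state recording the class of the last entry, whether a $2$-bounded run of $3$'s is currently open, and a running parity for the knot condition---is a legitimate alternative that mechanizes the same suffix recursion; its cost is that you must thread the knot condition explicitly as an automaton state (which requires tracking the denominator $q$ mod $2$ through the continued-fraction recursion, something the paper gets for free from its bijections), and its benefit is that the bookkeeping for $z$, which the paper handles through rather delicate partitions in Lemmas~\ref{L:fwzchange} and~\ref{L:Kcardinalities}, becomes systematic. One factual slip: you assert that $W(c)$ and $Z(c)$ carry a subdominant $2^{c/2}$ term coming from the parity correction; in fact their recurrences have characteristic roots $2$ and $-1$, so the only terms are of the shapes $c\cdot2^c$, $2^c$, and $(-1)^c$---the $2^{d}$ contributions with $d=\frac{c-1}{2}$ enter solely through the palindromic totals $W^P(c),Z^P(c)$ and through $|\mathcal K_c|$. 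This mis-attribution does not damage your final estimate $\varepsilon_1(c)=O(c\,2^{-c/2})\to0$, which is correct and matches the paper's conclusion.
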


The closed formulas for $W(c)$, $Z(c)$, $W^P(c)$, and $Z^P(c)$ are all derived in a similar manner. 
There are three main steps, which we describe now in the case of $W(c)$. 
\begin{enumerate} 
\item First, in Lemma~\ref{L:fbijections}, we show that the set of vectors $K(c)$ can be defined recursively. To do this, we partition $K(c)$ into four subsets 
\[
K(c) = K_{22}(c) \sqcup K_{\bar32}(c) \sqcup K_{3}(c) \sqcup K_{\bar 4}(c),
\]
defined below in Equation \eqref{eq:Kt}, and establish a bijection 
\[
g = f_1 \cup f_2 \cup f_3 \cup f_4 : K(c) \rightarrow K(c-2) \sqcup K(c-1) \sqcup K(c-2).
\]
\item Next, we use the bijection $g$ to determine a recursive description for $W(c)$. For many vectors, the bijection does not change the value of $w$, but for some particular subsets of vectors the value of $w$ decreases by one or two (see Lemma~\ref{L:fwzchange}). Thus, the bijection $g$ implies that $W(c) = W(c-1) + 2 W(c-2) + D_w$ where $D_w$ is the number of vectors where $g$ decreases the value of $w$ by one plus twice the number of vectors where $g$ decreases $w$ by two. In Lemma~\ref{L:Kcardinalities} we determine the sizes of the subsets of $K(c)$ for which $w$ changes, and from this we establish the recursion
\[
W(c) = W(c-1) + 2 W(c-2) + 3 \cdot 2^{c-5}.
\]
\item Finally, we solve the recursion subject to the initial values $W(4)=4$ and $W(5)=10$. Because the homogeneous recursion $W(c) = W(c-1) + 2 W(c-2)$ has a general solution $u_0(2^{c}) + v_0(-1)^c$, we solve the non-homogeneous recursion for $W(c)$ by determining appropriate values of coefficients in a solution of the form $(u_0+u_1 c)(2^{c}) + v_0(-1)^c$. This strategy provides the closed form $W(c) = c \cdot 2^{c-4}$ for $c \ge 4$ in Proposition~\ref{P:WZforumlas}. 
\end{enumerate}
\begin{rem}
While the formula suggests that perhaps some simpler approach might have worked, our approach has the virtue that it also yields formulas for $Z(c)$, $W^P(c)$, and $Z^P(c)$.
\end{rem}

\subsection{The asymptotic formula for average unoriented genus}

As mentioned above, the set of vectors $K(c)$ that define 2-bridge knots via the identification $\mathbf b \mapsto K[\mathbf b]_-$ can be defined recursively by a bijection $g$. We first establish some notation to help define this function. Let
 \begin{equation}
 \label{eq:Kt}
K_{\mathbf t}(c) = \left\{\mathbf b \in K(c) : \mathbf b\ \mbox{ends with the sub-vector} \ \mathbf t \right\},
\end{equation}
where omitting $\mathbf t$ means we allow any ending for $\mathbf b$ and  where $b_i = \bar s$ in $\mathbf t$ means $b_i \ge s$. For strings of the same value $b$ in $\mathbf t$ we shall use the notation $b^{[m \ge 1]}$ to refer to all vectors in $K(c)$ that have a substring of $b$'s with length $m \ge 1$. For example, 
\[
K_{23^{[m \ge 1]}22}(c) = K_{2322}(c) \sqcup K_{23322}(c) \sqcup K_{233322}(c) \sqcup \dots
\]
Of course, only finitely many of the sets in this union are nonempty because of the constraint $c = \sum_{i=1}^k b_i - (k-1)$. Finally, for any vector $\mathbf b$, we also define
\[
K_{\{\mathbf b\}}(c)= K(c) \cap \{\mathbf b\} 
\]
and note that $K_{\{\mathbf b\}}(c)$ may be empty because the vector $\mathbf{b}$ may correspond to a 2-component link rather than a knot. For example, $K_{\{(4,2^{[n \ge 0]})\}}(6) = \varnothing$ since $c=6$ implies $n = 2$, but $(4,2,2)$ represents a 2-component link instead of a knot.

For $c\geq 5$, partition $K(c)=K_{22}(c)\sqcup K_{\bar 32}(c)\sqcup  K_{3}(c)\sqcup K_{\bar 4}(c)$ and define four functions \begin{align*}
f_1 :&\, K_{22}(c) \rightarrow K(c-2) \\
f_2 :&\, K_{\bar 32}(c) \rightarrow K_{2}(c-1) \\
f_3 :&\, K_{3}(c) \rightarrow K_{\bar 3}(c-1) \\
f_4 :&\, K_{\bar 4}(c) \rightarrow K(c-2)
\end{align*} 
as shown in Figure~\ref{Fi:f1f2f3f4}, namely 
\[
\arraycolsep=1.4pt\def\arraystretch{1}
\begin{array}{lll}
f_1(b_1,%b_2,
\dots ,b_{k-2} ,2,2) &  =  (b_1, %b_2, 
\dots, b_{k-2}), & \mbox{for $k \ge 3$}, \\
f_2(b_1,%b_2,
\dots,b_{k-1},2) & =  (b_1, %b_2, 
\dots, b_{k-1}-1,2), & \mbox{for $k \ge 2$ and $b_{k-1} \ge 3$}, \\
f_3(b_1,%b_2,
\dots,b_{k-1},3) & =  (b_1, %b_2, 
\dots, b_{k-1}+1),&  \mbox{for $k \ge 2$}, \\
f_4(b_1,%b_2,
\dots,b_{k-1},b_k) & =  (b_1, %b_2, 
\dots, b_{k-1},b_k-2),& \mbox{for $b_k \ge 4$}.
\end{array}
\]
These bijections determine a Jacobsthal recursion on 2-bridge knots and were motivated by a similar approach taken in \cite{CohLow}.

\begin{figure}[h!]
\begin{center}
\labellist\tiny\hair 4pt
\pinlabel{$f_1$} at 125 300
\pinlabel{$f_2$} at 630 300
\pinlabel{$f_3$} at 1135 300
\pinlabel{$f_4$} at 1640 300
\pinlabel{$f_1^{-1}$} at 255 300
\pinlabel{$f_2^{-1}$} at 770 300
\pinlabel{$f_3^{-1}$} at 1275 300
\pinlabel{$f_4^{-1}$} at 1780 300
\endlabellist
\includegraphics[width=.75\textwidth]{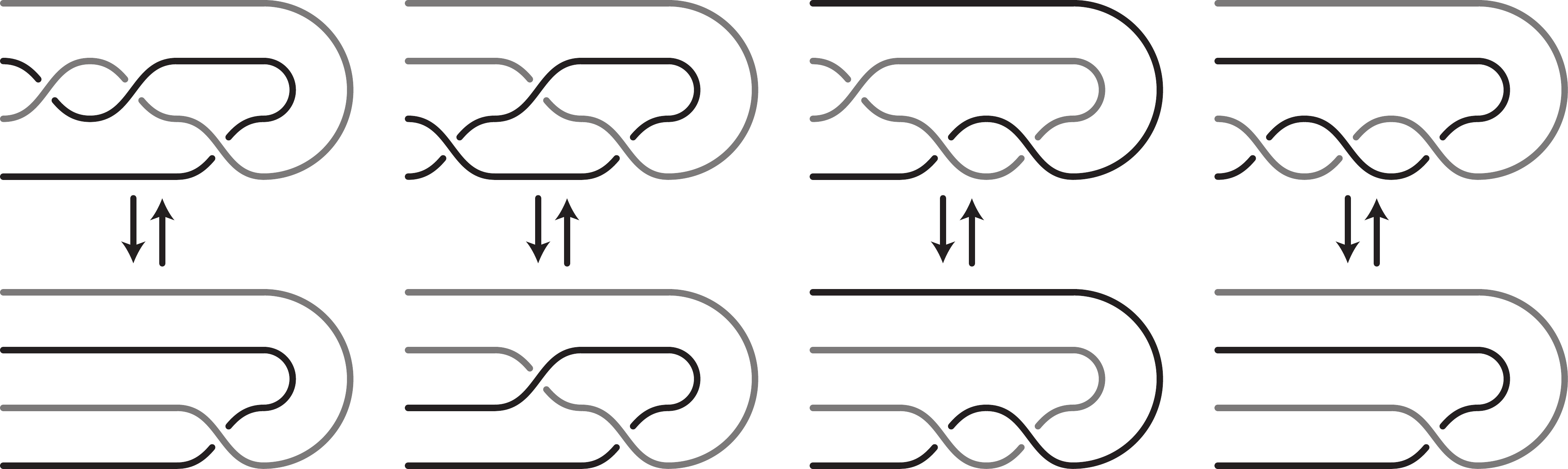}
\caption{The bijections $f_1$, $f_2$, $f_3$, and $f_4$ }
\label{Fi:f1f2f3f4}
\end{center}
\end{figure}

\begin{lemma}\label{L:fbijections}
The functions $f_1$, $f_2$, $f_3$, and $f_4$ defined above are bijections.
Hence, for $c \ge 5$, the function $g=f_1\cup f_2\cup f_3\cup f_4 : K(c) \rightarrow  K(c-2)\sqcup K(c-1)  \sqcup K(c-2)$
 is a bijection.
\end{lemma}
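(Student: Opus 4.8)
The plan is to verify that each of the four maps $f_1, f_2, f_3, f_4$ is a well-defined bijection onto its stated target, and then assemble these into the claimed bijection $g$ by checking that the domains partition $K(c)$ and the codomains assemble into the disjoint union $K(c-2) \sqcup K(c-1) \sqcup K(c-2)$. Since each $f_j$ is given by an explicit formula with an evident inverse, the heart of the argument is bookkeeping: confirming that each map sends knot vectors to knot vectors of the correct crossing number, lands in the prescribed subset, and is invertible.

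First I would handle the crossing-number accounting. Recall from Equation~\eqref{E:subformcrossingnum} that $\text{cr}(L[b_1,\dots,b_k]_-) = \sum_{i=1}^k b_i - (k-1)$. For $f_1$, deleting the trailing $2,2$ removes $4$ from the sum $\sum b_i$ and decreases $k$ by $2$, so the crossing number changes by $-4 + 2 = -2$, landing in $K(c-2)$. For $f_4$, subtracting $2$ from $b_k$ (while keeping $b_k - 2 \ge 2$) leaves $k$ unchanged and decreases the crossing number by $2$, landing in $K(c-2)$ as well. For $f_2$, replacing $b_{k-1}$ by $b_{k-1}-1$ (valid since $b_{k-1} \ge 3$) keeps $k$ fixed and decreases the crossing number by $1$; moreover the output ends in $2$, so it lies in $K_2(c-1)$. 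For $f_3$, merging the trailing $3$ into $b_{k-1}$ by replacing $(b_{k-1}, 3)$ with $(b_{k-1}+1)$ decreases $k$ by $1$ and changes the sum by $+1 - 3 = -2$, for a net crossing-number change of $-2 + 1 = -1$, landing in $K(c-1)$; and since $b_{k-1}+1 \ge 3$, the output ends in an entry $\ge 3$, so it lies in $K_{\bar 3}(c-1)$.

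Next I would verify invertibility. Each $f_j$ has an explicit inverse: $f_1^{-1}$ appends $2,2$; $f_2^{-1}$ adds $1$ to the penultimate entry (taking a vector ending in $2$ with penultimate entry $\ge 2$ to one with penultimate entry $\ge 3$); $f_3^{-1}$ replaces the final entry $b \ge 3$ with $(b-1, 3)$; and $f_4^{-1}$ adds $2$ to the final entry. One checks these are mutually inverse by direct substitution, and that each inverse also respects the entry constraints $b_i \ge 2$ and the subset membership. The one subtlety to confirm is that each map preserves the knot condition: by Schubert's classification the link $L(p/q)$ is a knot exactly when $q$ is odd, and I would check that these elementary continued-fraction modifications preserve the parity of the relevant denominator. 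This is the point most likely to require care, since the formulas are stated purely combinatorially; I expect the cleanest route is to observe that the subtractive-fraction operations correspond to simple moves that are visibly compatible with the recursive structure (as suggested by Figure~\ref{Fi:f1f2f3f4}), and to invoke the parity characterization of knots to conclude that knot vectors map to knot vectors.

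Finally, the assembly step is immediate once the four pieces are established. For $c \ge 5$ the four conditions ``ends in $2,2$'', ``ends in $(\ge 3)\,2$'', ``ends in exactly $3$'', and ``ends in $\ge 4$'' are mutually exclusive and exhaust all vectors in $K(c)$ (every vector ends either in a $2$, in which case the penultimate entry is either $2$ or $\ge 3$, or in an entry $\ge 3$, which is either exactly $3$ or $\ge 4$), giving the partition $K(c) = K_{22}(c) \sqcup K_{\bar 3 2}(c) \sqcup K_3(c) \sqcup K_{\bar 4}(c)$. On the codomain side, $f_1$ and $f_4$ map onto all of $K(c-2)$ (their images are the two copies in the disjoint union), while $f_2$ surjects onto $K_2(c-1)$ and $f_3$ surjects onto $K_{\bar 3}(c-1)$, and since every vector in $K(c-1)$ ends either in a $2$ or in an entry $\ge 3$, we have $K_2(c-1) \sqcup K_{\bar 3}(c-1) = K(c-1)$. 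Hence $g = f_1 \cup f_2 \cup f_3 \cup f_4$ is a bijection from $K(c)$ onto $K(c-2) \sqcup K(c-1) \sqcup K(c-2)$, as claimed. The main obstacle, as noted, is the verification that the knot condition is preserved under each elementary move; everything else is routine verification of explicit formulas.
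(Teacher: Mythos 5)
Your proposal is correct and matches the paper's proof in essentials: the paper likewise establishes the domain/codomain claims by tracking how each $f_j$ changes the crossing count, proves bijectivity by exhibiting the same explicit inverses, and handles the knot-preservation issue only by appeal to the shading in Figure~\ref{Fi:f1f2f3f4}. The one step you defer --- that each move preserves the parity of the denominator $q$ --- is indeed the only nontrivial point, and it follows quickly from reducing the continuant recursion $q(b_1,\dots,b_k)=b_k\,q(b_1,\dots,b_{k-1})-q(b_1,\dots,b_{k-2})$ mod $2$; the paper does not spell this out either.
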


\begin{proof}
It is evident from the shading in Figure~\ref{Fi:f1f2f3f4} that each function $f_i$ takes knots to knots, and that $f_2$ and $f_3$ each decrease $c$ by one, while $f_1$ and $f_4$ each decrease $c$ by two. Thus, each function has the advertised domain and codomain.  Bijectivity follows from the existence of the following inverse functions, which can also be seen in Figure~\ref{Fi:f1f2f3f4}:

\[
\arraycolsep=1.4pt\def\arraystretch{1}
\begin{array}{lll}
f_1^{-1}(b_1, %b_2,
\dots, b_{k},2)&= (b_1,%b_2,
\dots,b_{k}+1,2), \\
f_2^{-1}(b_1, %b_2,
\dots, b_{k}) &=(b_1,%b_2,
\dots ,b_{k} ,2,2), \\
f_3^{-1}(b_1, %b_2,
\dots, b_{k})&= (b_1,%b_2,
\dots,b_{k}-1,3),\\
f_4^{-1}(b_1, %b_2,
\dots, b_{k-1},b_k) &= (b_1, %b_2,
\dots, b_{k-1},b_k+2)
.
\end{array}
\]

\end{proof}

\begin{example}
\label{Ex:K6example}
The bijection $g:K(6) \rightarrow K(4) \sqcup K(5) \sqcup K(4)$ is shown explicitly in Table~\ref{T:K6bijection}. On the left the partition $K(6) = K_{22}(6) \sqcup K_{\bar 32}(6) \sqcup K_3(6) \sqcup K_{\bar 4}(6)$ is depicted in four boxes. We chose to order the vectors lexicographically from the end (right). The images $f_1(K_{22}(6))=K(4)$ and $f_4(K_{\bar 4}(6))=K(4)$ are depicted as the topmost and bottommost boxes on the right. The image $f_2 \cup f_3 (K_{\bar 32}(6) \sqcup K_3(6)) = K(5)$ is depicted in the middle box on the right. Notice that $f_2 \cup f_3$ induces the partition $K(5) = K_{2}(5) \sqcup K_{\bar 3}(5)$ on the image.

\begin{table}[h]
\[
\begin{array}{c|r|c|r|c} \cline{2-2}\cline{4-4}
\multirow{2}{3em}{$K_{22}(6)$} & (3,2,2,2) & \multirow{2}{2em}{$\stackrel{f_1}{\longrightarrow}$} & (3,2) & \multirow{2}{4em}{$K(4)$} \\
& (2,3,2,2) & & (2,3) & \\ [2pt] \cline{2-2} \cline{4-4} 
\multirow{3}{3em}{$K_{\bar 32}(6)$} & (2,2,3,2) & \multirow{3}{2em}{$\stackrel{f_2}{\longrightarrow}$} & (2,2,2,2) & \multirow{6}{4em}{$K(5)$}\\
 & (3,3,2) &  & (3,2,2) & \\
 & (5,2) & & (4,2) &  \\ [2pt]\cline{2-2} \arrayrulecolor{lightgray}
 \cline{4-4} \arrayrulecolor{black}
\multirow{3}{3em}{$K_{3}(6)$} & (2,2,2,3) & \multirow{3}{2em}{$\stackrel{f_3}{\longrightarrow}$} & (2,2,3) & \\
& (2,3,3) & & (2,4) & \\
& (4,3) & & (5) & \\ [2pt] \cline{2-2 } \cline{4-4}
\multirow{2}{3em}{$K_{\bar 4}(6)$} & (3,4) & \multirow{2}{2em}{$\stackrel{f_4}{\longrightarrow}$} & (3,2) & \multirow{2}{4em}{$K(4)$} \\
& (2,5) & & (2,3) & \\ \cline{2-2} \cline{4-4}
\end{array}
\]
\caption{The bijection $g:K(6) \rightarrow K(4) \sqcup K(5) \sqcup K(4)$}
\label{T:K6bijection}
\end{table}

\end{example}

In order to define $W(c)$ and $Z(c)$ recursively, we need to determine how the bijections $f_1$, $f_2$, $f_3$, and $f_4$ change the values of $w$ and $z$. This is found in the following lemma.

\begin{lemma}\label{L:fwzchange}
Given $\mathbf b \in \mbox{domain}(f_j)$ for $1 \le j \le 4$, let $\Delta w = w (\mathbf b) - w (f_j(\mathbf b))$ and $\Delta z = z (\mathbf b) - z (f_j(\mathbf b))$. Then $\Delta w$ and $\Delta z$ are given in the table below where $n \ge 0$ and $m \ge 1$ are fixed integers.
For any vector in the domain of $f_j$ that is not listed in the table we have $\Delta w = 0$ or $\Delta z = 0$.

\[
\renewcommand{\arraystretch}{1.25}
\begin{array}{c|r|r|c}
f_j & \multicolumn{1}{c|}{\mathbf b}  & \multicolumn{1}{c|}{f_j({\mathbf b})} & \Delta w \\ \hline
f_1 & (\dots, \bar 3,2,2) &  (\dots, \bar 3) & 1  \\
f_2 & (\dots, 2,3,2) &  (\dots, 2,2,2) & 2  \\
f_2 & (\dots, \bar 3,3,2) &  (\dots, \bar 3,2,2) & 1  \\
f_3 & (\dots, \bar 3,2,3) &  (\dots, \bar 3,3) & 1  \\
f_3 & (\dots, \bar 3,3) &  (\dots, \bar 4) & 1  \\
f_4 & (\dots, 2,4) &  (\dots, 2,2) & 1 
\end{array}
\hspace{.5in}
\begin{array}{c|r|r|r}
f_j & \multicolumn{1}{c|}{\mathbf b}  & \multicolumn{1}{c|}{f_j({\mathbf b})} & \Delta z \\ \hline
f_1 & (\dots, 2,3^{[m]},2,2)  &  (\dots, 2,3^{[m]}) &  1 \\
f_2 & (\dots, 2,3,2) &  (\dots, 2,2,2) &  1 \\
f_2 & (\dots, 2,3^{[n]},4,2) &  (\dots, 2,3^{[n+1]},2) &  -1 \\
f_3 & (\dots, 2,3^{[m]},2,3) &  (\dots, 2,3^{[m+1]}) &  1 \\
f_4 & (\dots, 2,3^{[m]},4) &  (\dots, 2,3^{[m]},2) &  -1 \\
\multicolumn{4}{c}{}
\end{array}
\renewcommand{\arraystretch}{1}
\]
\end{lemma}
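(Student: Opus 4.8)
The plan is to exploit the fact that the quantities $w$ and $z$ of Theorem~\ref{T:2BridgeGamma} are \emph{local} statistics of the vector $\mathbf b$. Recall that $w$ is the number of maximal runs of $2$'s plus the number of entries that are at least $3$, so each contribution to $w$ is determined by a single entry together with its immediate neighbors; and $z$ is the number of maximal runs of $3$'s flanked by a $2$ on each side, so the contribution of such a run is determined solely by its two flanking entries and by whether the run is interior to $\mathbf b$. Since each $f_j$ alters only a suffix of bounded length, the contributions to $w$ and $z$ coming from entries to the left of that suffix are identical for $\mathbf b$ and $f_j(\mathbf b)$ and cancel in $\Delta w$ and $\Delta z$. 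Thus both differences are determined by comparing local contributions on a bounded window at the tail, and I would organize the proof as a finite case analysis over the possible tail patterns, tracking any maximal run that $f_j$ creates, destroys, lengthens, or re-flanks.

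Carrying this out, I would verify the table row by row. For $f_1$, removing a trailing $(2,2)$ destroys one maximal $2$-run exactly when the preceding entry is at least $3$ (giving $\Delta w = 1$; otherwise the run merely shortens), and it strips the right flank from a $2$-flanked $3$-run exactly in the pattern $(\dots,2,3^{[m]},2,2)$ (giving $\Delta z = 1$). For $f_2$, decreasing $b_{k-1}\geq 3$ by one is inert when $b_{k-1}\geq 5$; the two thresholds are $b_{k-1}=3$, where the entry drops to a $2$, merging $2$-runs and deleting a $\geq 3$ entry (yielding $\Delta w = 2$ when a $2$ precedes the $3$ and $\Delta w = 1$ when a $\geq 3$ entry does, with $\Delta z = 1$ precisely for $(\dots,2,3,2)$), and $b_{k-1}=4$, where a new $3$ appears and extends an adjacent run that, when flanked by $2$'s, becomes newly counted, giving $\Delta z = -1$ for $(\dots,2,3^{[n]},4,2)$. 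Likewise, $f_3$ deletes a trailing $3$ while incrementing its predecessor, which either removes a $\geq 3$ entry ($\Delta w = 1$ for $(\dots,\bar 3,3)$ and $(\dots,\bar 3,2,3)$) or fuses a $2$-flanked $3$-run with the trailing $3$ so the enlarged run runs off the right end and is no longer counted ($\Delta z = 1$ for $(\dots,2,3^{[m]},2,3)$); and $f_4$ decreases a trailing $b_k\geq 4$ by two, which matters only at $b_k=4$, turning the entry into a $2$ ($\Delta w = 1$ for $(\dots,2,4)$) and newly $2$-flanking an adjacent run ($\Delta z = -1$ for $(\dots,2,3^{[m]},4)$).

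The next step is to confirm the completeness claim as stated: every vector not matching a listed pattern has $\Delta w = 0$ or $\Delta z = 0$. For most such vectors both differences vanish, these being the generic situations in which an entry at least $4$ is modified but stays at least $4$, a $2$-run only changes length without changing the count of maximal $2$-runs, or a $3$-run keeps the same ordered pair of flanks. The reason the claim is phrased with ``or'' rather than ``and'' is a handful of short boundary vectors, such as $(3,2)$ under $f_2$ and $(2,3)$ under $f_3$, where the modified suffix runs into the start of $\mathbf b$: here the interior patterns presuppose a preceding entry that is absent, yet one checks directly that $\Delta w = 1$ while $\Delta z = 0$, so the weaker ``or'' statement still holds. (These exceptional vectors have crossing number $4$, so they do not interfere with the recursion, which is run for $c\geq 5$.)

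I expect the main obstacle to be the bookkeeping in the coupled cases. The sharpest is the $f_2$ row $(\dots,2,3,2)\mapsto(\dots,2,2,2)$, where turning the isolated $3$ into a $2$ simultaneously merges two maximal $2$-runs into one \emph{and} deletes a $\geq 3$ entry, forcing $\Delta w = 2$ rather than $1$, while also destroying a $2$-flanked $3$-run so that $\Delta z = 1$. Equally delicate are the rows with $\Delta z = -1$, where $f_j$ \emph{creates} a newly flanked $3$-run, so $z$ increases and the sign of $\Delta z$ is negative; here I must pin down the flanking hypotheses exactly. Finally, getting the empty-run and boundary edge cases right, namely $m=0$ versus $m\geq 1$ and suffixes short enough to reach the start of $\mathbf b$, is essential, since it is precisely these boundary vectors that force the completeness claim to be phrased as ``$\Delta w = 0$ or $\Delta z = 0$.''
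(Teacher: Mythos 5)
Your proposal is correct and follows essentially the same route as the paper: both partition the domain of each $f_j$ by tail pattern and compute $\Delta w$ and $\Delta z$ directly case by case, using the locality of $w$ and $z$ in the vector $\mathbf b$. The only quibble is your explanation of the ``or'' in the completeness claim: the vectors $(3,2)$ and $(2,3)$ you cite have crossing number $4$ and hence lie outside the domains (the $f_j$ are defined for $c\ge 5$), so within the actual domains the ``or'' merely reflects that a given tail pattern may appear in one table but not the other.
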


\begin{proof} For $\Delta w$, we use the following partitions of the domains of the functions $f_i$:
\begin{align*}
\text{for }f_1,&~K_{22}(c)=\Gray{K_{222}(c)}\sqcup K_{\bar{3}22}(c),\\
\text{for }f_2, &~K_{\bar{3}2}(c)=K_{232}(c)\sqcup K_{\bar{3}32}(c)\sqcup\Gray{K_{\bar{4}2}(c)},\\
\text{for }f_3,&~K_{3}(c)=\Gray{K_{223}(c)}\sqcup K_{\bar{3}23}(c)\sqcup K_{\bar{3}3}(c),\text{ and}\\
\text{for }f_4,&~K_{\bar{4}}(c) = K_{24}(c)\sqcup \Gray{K_{\bar{3}4}(c)}\sqcup \Gray{K_{\bar{5}}(c)}.
\end{align*}

The subsets shaded gray above are those where $\Delta w=0$, and the subsets shaded black are those which appear in the table above. For each subset one calculates $\Delta$ directly. For example, in the case listed in the second row of the table, if $\mathbf b = (b_1, \dots, b_k,2,3,2)\in K_{232}(c)$, then
\[
w(\mathbf b)  = w((b_1,\dots, b_k)) + 3-\delta_{b_k,2}, 
\]
{where $\delta_{b_k,2} =1 $ if $b_k = 2$ and $\delta_{b_k,2} = 0$ if $b_k \neq 2$ (again using Kronecker's delta notation)}, whereas
\[
w(f_1(\mathbf b)) = w((b_1,\dots, b_k,2,2,2)) = w((b_1,\dots, b_k)) + 1-\delta_{b_k,2}.
\]
Therefore, $\Delta w=2$ in this case. All of the remaining calculations for $\Delta w$ are similar. 

For $\Delta z$, the reasoning is similar, using the following partitions, where the subsets shaded gray are those for which $\Delta z=0$: 
\begin{align*}\pushQED\qed
\text{for $f_1$,}&~K_{22}(c)=\Gray{K_{222}(c)}\sqcup K_{23^{[m\ge 1]}22}(c)\sqcup\Gray{K_{\{ (3^{[m\ge 1]},2,2)\}}(c)} \sqcup \Gray{K_{\bar 4 3^{[n\ge 0]} 22}(c)},\\
\text{for $f_2$,}&~K_{\bar{3}2}(c)=K_{232}(c)\sqcup \Gray{K_{\bar 3 3 2}(c)}\sqcup K_{2 3^{[n\ge 0]}42}(c)\sqcup \Gray{K_{\{(3^{[n\ge 0]},4,2)\}}(c)} \sqcup  \Gray{K_{\bar 4 3^{[n\ge 0]}42}} \sqcup \Gray{K_{\bar 52}(c)},\\
\text{for $f_3$,}&~K_{3}(c)=\Gray{K_{223}(c)}\sqcup K_{23^{[m\ge 1]}23}(c)\sqcup \Gray{K_{\{(3^{[m\ge 1]},2,3)\}}(c)}\sqcup \Gray{K_{\bar 4 3^{[n\ge 0]}23}(c)} \sqcup \Gray{K_{\bar 3 3}(c)},\text{ and}\\
\text{for $f_4$,}&~K_{\bar{4}}(c)= \Gray{K_{24}(c)}\sqcup K_{23^{[m\ge 1]}4}(c)\sqcup \Gray{K_{\{(3^{[m\ge 1]},4)\}}(c)}\sqcup \Gray{K_{\bar 4 3^{[n\ge 0]}4}} \sqcup \Gray{K_{\bar5}(c)}.
\qedhere
\end{align*}

\end{proof}

There is a non-homogeneous term in the recursions for $W(c)$ and $Z(c)$ that will depend on the size of subsets of $K(c)$ where the values of $w$ or $z$ are changed according to Lemma~\ref{L:fwzchange}. The following lemma gives the cardinalities of {these subsets} in terms of the function $j(c)$ which satisfies the homogeneous %, 
Jacobsthal recursion $j(c) = j(c-1) + 2 j(c-2)$ with initial conditions $j(0)=0$ and $j(1)=1$ \cite{OEIS1045}.  This function is given by 
\begin{equation*}\label{E:j(c)def}
j(c) = \frac{2^c-(-1)^c}{3}
\end{equation*}
and satisfies the identity
\begin{equation}\label{E:j(c)property}
2j(c) - j(c+1) = 2 \left(\frac{2^c-(-1)^c}{3}\right)-\frac{2^{c+1}-(-1)^{c+1}}{3} =(-1)^{c+1}.
\end{equation}
Some small values of $j(c)$ are listed in Table~\ref{T:Jacobsthalnumbers}.

\begin{table}[h]
\renewcommand{\arraystretch}{1.5}
\[
\begin{array}{c|rrrrrrrrrrr}
c & 0 & 1 & 2 & 3 & 4 & 5 & 6 & 7 & 8 & 9 & 10\\\hline
j(c)&  0 & 1 & 1 & 3 & 5 & 11 & 21 & 43 & 85 & 171 & 341
\end{array}
\]
\vspace{2pt}
\caption{Small values in the Jacobsthal recursion.}
\label{T:Jacobsthalnumbers}
\end{table}

\begin{lemma}\label{L:Kcardinalities}
We have the following identities on cardinalities of sets:
\begin{itemize}
\item[(a)] $|K(c)| = 2 j(c-2)$ for $c \ge 2$. \\
\item[(b)] $|K_{22}(c)| = |K_{\bar 4}(c)| = 2j(c-4)$, for $c \ge 4$. \\
\item[(c)] $|K_{2}(c)| = |K_{\bar 3}(c)| =j(c-2)$, for $c \ge 2$.\\
\item[(d)] $|K_{2 3^{[m\ge 1]}22}(c)| = |K_{2 3^{[m\ge 1]}22}(c-2)| + j(c-6) + (-1)^c\cdot \delta_{0,c\text{ mod }3}$, for $c \ge 6$.
\end{itemize}
\end{lemma}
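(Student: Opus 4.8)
The plan is to obtain (a)--(c) directly from the bijections $f_1,f_2,f_3,f_4$ of Lemma~\ref{L:fbijections} together with the Jacobsthal recursion, and to handle (d) separately via a parity analysis. For (a), the bijection $g=f_1\cup f_2\cup f_3\cup f_4\colon K(c)\to K(c-2)\sqcup K(c-1)\sqcup K(c-2)$ gives $|K(c)|=|K(c-1)|+2|K(c-2)|$ for $c\ge 5$, which is exactly the recursion satisfied by $2j(c-2)$; a short direct count gives $|K(2)|=0$, $|K(3)|=2$, and $|K(4)|=2$ (the last visible in Example~\ref{Ex:K6example}), matching $2j(0),2j(1),2j(2)$, so induction yields (a). For (b), the restrictions $f_1\colon K_{22}(c)\to K(c-2)$ and $f_4\colon K_{\bar 4}(c)\to K(c-2)$ are bijections, so $|K_{22}(c)|=|K_{\bar 4}(c)|=|K(c-2)|=2j(c-4)$ by (a).

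For (c), I would use the refinements $K(c)=K_2(c)\sqcup K_{\bar 3}(c)$, $K_2(c)=K_{22}(c)\sqcup K_{\bar 32}(c)$, and $K_{\bar 3}(c)=K_3(c)\sqcup K_{\bar 4}(c)$. The bijections $f_2\colon K_{\bar 32}(c)\to K_2(c-1)$ and $f_3\colon K_3(c)\to K_{\bar 3}(c-1)$, combined with $|K_{22}(c)|=|K_{\bar 4}(c)|=|K(c-2)|$ from (b), give $|K_2(c)|=|K(c-2)|+|K_2(c-1)|$ and $|K_{\bar 3}(c)|=|K(c-2)|+|K_{\bar 3}(c-1)|$. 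Subtracting shows the difference $|K_2(c)|-|K_{\bar 3}(c)|$ is independent of $c$ for $c\ge 5$; since it vanishes at $c=3,4$ (and trivially at $c=2$), it vanishes for all $c\ge 2$, whence $|K_2(c)|=|K_{\bar 3}(c)|=\tfrac12|K(c)|=j(c-2)$.

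Part (d) is the crux, and it is where the knot condition ($q$ odd) genuinely enters. First I would strip the trailing $2,2$: since $f_1\colon K_{22}(c)\to K(c-2)$ is a knot-preserving bijection that deletes the final two $2$'s, it restricts to a bijection $K_{23^{[m\ge1]}22}(c)\to K_{23^{[m\ge1]}}(c-2)$, reducing (d) to a recursion for $B(n):=|K_{23^{[m\ge1]}}(n)|$. Writing such a vector as $(\mathbf a,2,3^{[m]})$, I would track the denominator $q$ modulo $2$ through the continuant recursion $D_k\equiv \epsilon_k D_{k-1}+D_{k-2}\pmod 2$ (where $\epsilon_i=b_i\bmod 2$), encoding it as a state $(X,Y)\in\{(0,1),(1,0),(1,1)\}$, with the vector a knot iff $Y=1$ at the end. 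Appending an even entry swaps $(X,Y)$ and appending an odd entry sends $(X,Y)\mapsto(Y,X+Y)$, so the block $3^{[m]}$ acts with order $3$. Consequently the knot condition for $(\mathbf a,2,3^{[m]})$ depends only on $m\bmod 3$ and on the parity-state of the prefix ending in the distinguished $2$.

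The main obstacle is precisely that this knot condition is \emph{not} preserved when $3^{[m]}$ is shortened by a single $3$, since that changes $m\bmod 3$; thus the naive ``delete a $3$'' map sends some knots to links, and no term-by-term bijection exists. To finish I would classify the prefixes ending in $2$ with crossing number $n'$ by their state, writing $N_{01}(n'),N_{10}(n'),N_{11}(n')$ for the three counts, and express $B(n)=\sum_{m\ge 1}C(m\bmod 3;\,n-2m)$, where $C(\rho;n')$ counts those prefixes whose state meets the knot condition for exponent $\equiv\rho$ (an appropriate pair-sum of the $N$'s). These counts obey a mod-$2$ transfer-matrix recursion; part (c) gives $N_{01}(n')+N_{11}(n')=j(n'-2)$, and the additional identity $N_{11}(n')-N_{01}(n')=(-1)^{n'}$ for $n'\ge 3$---an analogue of~\eqref{E:j(c)property}---pins down each count. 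Substituting these and summing the resulting geometric and Jacobsthal series over $m$ reproduces the main term $B(n-2)+j(n-4)$, while the $(-1)^{n'}$ contributions, filtered by the residue $m\bmod 3$, survive only as the single periodic term $(-1)^n\delta_{1,n\bmod 3}$; re-indexing by $n=c-2$ converts this into $j(c-6)+(-1)^c\delta_{0,c\bmod 3}$, as claimed. I expect the real work to lie in establishing $N_{11}-N_{01}=(-1)^{n'}$ and in the final bookkeeping of this period-$2$/period-$3$ interaction.
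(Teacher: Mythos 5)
Your treatment of (a)--(c) is correct and essentially the paper's own: the recursion $|K(c)|=|K(c-1)|+2|K(c-2)|$ from $g$, the restrictions of $f_1$ and $f_4$ for (b), and for (c) the paper runs the induction $|K_2(c)|=2j(c-4)+|K_2(c-1)|$ directly where you instead note that $|K_2(c)|-|K_{\bar 3}(c)|$ is constant in $c$; both are fine.

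Part (d) is where you genuinely depart from the paper, and where your argument is a plan rather than a proof. The paper proves (d) by strong induction using the composite bijections $f_3\circ f_3\circ f_1$, $f_4\circ f_3\circ f_1$, $f_1\circ f_2\circ f_4\circ f_3\circ f_1$, and $f_2\circ f_4\circ f_3\circ f_1$, which carry $K_{2 3^{[m\ge 1]}22}(c)$ onto $K_{\bar 3}(c-4)\sqcup K_{22}(c-5)\sqcup K_2(c-8)\sqcup K_{2 3^{[m\ge 1]}22}(c-6)$; together with (b), (c), and \eqref{E:j(c)property} this yields (d) with no parity analysis at all. Your continuant/transfer-matrix route is viable --- your state transitions are correct, the reduction via $f_1$ to counting vectors ending in $2,3^{[m]}$ is legitimate, and I can confirm in small cases that the resulting bookkeeping reproduces the stated recursion --- but three things are missing. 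First, the identity $N_{11}(n')-N_{01}(n')=(-1)^{n'}$ is the heart of the matter and is only asserted; it needs its own induction (it is not a consequence of \eqref{E:j(c)property}, which concerns the Jacobsthal numbers, not these state counts). Second, two linear relations cannot ``pin down'' three unknowns: since $C(1;n')$ and $C(2;n')$ involve $N_{10}$, you also need the total number $2^{n'-3}$ of admissible prefixes ending in $2$ with crossing number $n'\ge 3$ (a composition count), which you never introduce. Third, the boundary prefix $(2)$ (crossing number $n'=2$, state $(1,0)$) violates your identity, since there $N_{11}-N_{01}=0\neq(-1)^2$, and the corresponding vectors $(2,3^{[m]})$ genuinely occur --- e.g.\ $(2,3)$ is the unique element contributing to $|K_{2 3^{[m\ge 1]}22}(6)|=1$, yet the closed form for $C(1;2)$ that your identities would give is $j(0)=0$. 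This boundary term is not a removable nuisance: for $c\equiv 0\Mod{6}$ it is precisely what supplies the $(-1)^c\delta_{0,c\text{ mod }3}$ in the statement, so your claim that the periodic contributions ``survive only as the single periodic term'' must be argued, not just narrated. None of this looks fatal, but as written you have proved (a)--(c) and only sketched (d); the paper's bijective induction is considerably shorter because it never has to separate knots from links by hand.
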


In part (d), we have $\delta_{0,c\text{ mod }3}=1$ if $c\equiv0\Mod{3}$ and otherwise $\delta_{0,c\text{ mod }3}=0$.

\begin{proof}
For part (a), partition $K(c)$ as 
$K(c) = K_{22}(c) \sqcup K_{\bar 32}(c) \sqcup   K_{3}(c) \sqcup  K_{\bar 4}(c)$. Using the bijections $f_1$, $f_2$, $f_3$, and $f_4$ from Lemma~\ref{L:fbijections} we obtain
\[
|K(c)| = |K(c-2)| + |K_2(c-1)| +  |K_{\bar 3}(c-1)| + |K(c-2)|= |K(c-1)| + 2|K(c-2)|.
\]
Solving this homogeneous Jacobsthal recursion subject to the initial conditions $|K(2)| =0$ and $|K(3)| = 2$ gives 
\[
|K(c)| = \frac{2}{3} \, 2^{c-2} - \frac{2}{3} (-1)^{c-2} = 2j(c-2).
\]

For part (b), by Lemma~\ref{L:fbijections}, $f_1:K_{22}(c) \rightarrow K(c-2)$ and $f_4:K_{\bar 4}(c) \rightarrow K(c-2)$ are bijections. Therefore, by part (a) we have
$ |K_{22}(c)| =|K_{\bar 4}(c)| = |K(c-2)| = 2j(c-4)$.

For part (c), we induct on $c \ge 4$. By direct calculation, since $K(4)=\{(2,3),(3,2)\}$, we have $|K_{2}(4)| = 1 = j(1)$.
Now assume $|K_{2}(c-1)| =  j(c-3)$. Then by part (b), Lemma~\ref{L:fbijections},  and the inductive hypothesis we have
\begin{align*}
|K_2(c)| & = |K_{22}(c)| + |K_{\bar 32}(c)| \\
& = 2j(c-4) + |K_2(c-1)| \\
& = 2j(c-4) + j(c-3)\\
& = j(c-2).
\end{align*}
A similar argument applies to $K_{\bar 3}(c)$.

For part (d) we induct on $c \ge 6$. By direct calculation $|K_{23^{[m\ge 1]}22}(4)| = |K_{23^{[m\ge 1]}22}(5)| = 0$ and $|K_{23^{[m\ge 1]}22}(6)| = |K_{23^{[m\ge 1]}22}(7)| = 1$. Hence, $|K_{23^{[m\ge 1]}22}(6)| = 0 + j(0) + (-1)^{6}$ and $|K_{23^{[m\ge 1]}22}(7)| = 0 +  j(1)$.
Thus, the base cases $c=6$ and $c=7$ are given by the formula. Now assume for $6 \le k \le c-1$ that  
\[
|K_{2 3^{[m\ge 1]}22}(k)| = 
|K_{2 3^{[m]\ge 1}22}(k-2)| + j(k-6) + (-1)^{k} \cdot \delta_{0,k~\mbox{\scriptsize mod}~3}.
\]
Consider the partition
\[
K_{2 3^{[m\ge 1]}22}(c) = K_{2322}(c) \sqcup K_{23322}(c) \sqcup K_{233322}(c) \sqcup K_{2 3^{[\ell \ge 4]}22}(c).
\]
If we apply the bijections $f_3 \circ f_3 \circ f_1$, {$f_4 \circ f_3 \circ f_1$}, $f_1 \circ f_2 \circ f_4 \circ f_3 \circ f_1$, and $f_2 \circ f_4 \circ f_3 \circ f_1$ to each of the subsets, respectively, we obtain a bijection
\[
K_{2 3^{[m\ge 1]}22}(c) \rightarrow K_{\bar 3}(c-4) \sqcup K_{22}(c-5) \sqcup K_2(c-8) \sqcup K_{2 3^{[m\ge 1]}22}(c-6).
\]
Therefore, by our inductive hypothesis, part (b), and part (c), it follows that
\begin{align*}
|K_{2 3^{[m\ge 1]}22}(c) | &=  j(c-6) + 2j(c-9)  + j(c-10) + |K_{2 3^{[m\ge 1]}22}(c-6) | \\
&=   j(c-6) + 2j(c-9)  +  |K_{2 3^{[m\ge 1]}22}(c-4)| - (-1)^{c-4} \cdot \delta_{0,c-4~\mbox{\scriptsize mod}~3} \\
&=   j(c-6) + 2j(c-9)   +  |K_{2 3^{[m\ge 1]}22}(c-2)| -  j(c-8) ~- \\
&\phantom{=}~(-1)^{c-2} \cdot \delta_{0,c-2~\mbox{\scriptsize mod}~3} - (-1)^{c-4} \cdot \delta_{0,c-4~\mbox{\scriptsize mod}~3}\\
&=  |K_{2 3^{[m\ge 1]}22}(c-2)|+j(c-6)~+ \\
&\phantom{=}~(-1)^c - (-1)^{c-2} \cdot \delta_{0,c-2~\mbox{\scriptsize mod}~3} - (-1)^{c-4} \cdot \delta_{0,c-4~\mbox{\scriptsize mod}~3},
\end{align*}
where  the last step uses Equation~\eqref{E:j(c)property}. Moreover, note that
\begin{align*}
(-1)^c - (-1)^{c-2} \cdot \delta_{0,c-2~\mbox{\scriptsize mod}~3} - (-1)^{c-4} \cdot \delta_{0,c-4~\mbox{\scriptsize mod}~3} &= 
(-1)^c(1 - \delta_{2,c~\mbox{\scriptsize mod}~3} - \delta_{1,c~\mbox{\scriptsize mod}~3}) \\
&=(-1)^c \cdot \delta_{0,c~\mbox{\scriptsize mod}~3}.
\end{align*}
Therefore, $|K_{2 3^{[m\ge 1]}22}(c)| = |K_{2 3^{[m\ge 1]}22}(c-2)|+j(c-6) + (-1)^c \cdot \delta_{0,c~\mbox{\scriptsize mod}~3}$.

\end{proof}

In order to determine recursive formlulas for $W(c)$ and $Z(c)$, we extend the notation introduced in Equation~\eqref{E:WZ} as follows:
 \[
W_{\mathbf t}(c) = \sum_{\mathbf b\in K_{\mathbf t}(c)}w(\mathbf b),\hspace{1in}Z_{\mathbf t}(c) = \sum_{\mathbf b\in K_{\mathbf t}(c)}z(\mathbf b).\]

\begin{example}\label{Ex:WZ4567}
We have $K(4)=\{(2,3),(3,2)\}$,
so $W_2(4)=2=W_3(4)$, $W(4)=4$, and $Z(4)=0$.  Also, $K(5)=\{(2,2,2,2),(3,2,2),(4,2),(2,2,3),(2,4),(5)\}$, so $W_2(5)=W_{\bar 3}(5)=1+2+2=5$, $W(5)=10$, and $Z(5)=0$. The only tuples in $K(6)$ for which $z$ is nonzero are $(2,3,2,2)$ and $(2,2,3,2)$, so $Z(6)=2$, and the tuples in $K(7)$ for which $z$ is nonzero are $(2,2,3,2,2)$, $(3,2,3,2)$,   $(2,3,3,2)$, and $(2,3,2,3)$, so $Z(7)=4$. 
\end{example}

We are now prepared to determine closed formulas for both $W(c)$ and $Z(c)$.

\begin{prop}\label{P:WZforumlas} We have the following formulas for $W(c)$ and $Z(c)$:
\begin{enumerate}
\item[(a)] $\displaystyle W(c) = c \cdot 2^{c-4}$ for $c \ge 4$ and \\
\item[(b)] $\displaystyle Z(c) = \frac{3c-8}{27} (2^{c -4}) + \frac{14}{27} (-1)^{c} - \frac{2}{3}(-1)^{c} \delta_{1,c\text{ mod }3}
$ for $c \ge 6$.

\end{enumerate}
\end{prop}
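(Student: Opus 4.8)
The plan is to derive closed formulas for $W(c)$ and $Z(c)$ by first establishing recursions from the bijection $g$ of Lemma~\ref{L:fbijections} and the change-of-value data in Lemma~\ref{L:fwzchange}, then solving each recursion against initial data computed in Example~\ref{Ex:WZ4567}. The key observation is that $g$ is a bijection $K(c)\to K(c-2)\sqcup K(c-1)\sqcup K(c-2)$, so summing $w$ over $K(c)$ and tracking the defect yields
\[
W(c) = W(c-1) + 2W(c-2) + D_w(c),
\]
where $D_w(c)$ counts, with multiplicity, the vectors on which $w$ strictly decreases under the relevant $f_j$. By Lemma~\ref{L:fwzchange}, each contributing vector drops $w$ by exactly $1$ or $2$, so $D_w(c)$ is a weighted sum of the cardinalities of the six sets appearing in the left table of that lemma; I would evaluate these using Lemma~\ref{L:Kcardinalities}. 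The analogous computation for $z$ gives
\[
Z(c) = Z(c-1) + 2Z(c-2) + D_z(c),
\]
except now $\Delta z$ takes values $+1$ and $-1$, so $D_z(c)$ is a signed sum, and the set $K_{23^{[m\ge 1]}22}(c)$ (governed by part (d) of Lemma~\ref{L:Kcardinalities}, with its $(-1)^c\delta_{0,c\bmod 3}$ correction) enters with a sign.

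First I would assemble $D_w(c)$. Reading the $\Delta w$ table, the rows with $\Delta w=1$ contribute $|K_{\bar 3 2 2}(c)|$ (for $f_1$), $|K_{\bar 3 3 2}(c)|$ (second $f_2$ row), $|K_{\bar 3 2 3}(c)|$ and $|K_{\bar 3 3}(c)|$ (the two $f_3$ rows), and $|K_{24}(c)|$ (for $f_4$), while the single $\Delta w=2$ row contributes $2\,|K_{232}(c)|$. I expect each of these cardinalities to reduce, via the bijections and the identities in Lemma~\ref{L:Kcardinalities}, to Jacobsthal values $j(\cdot)$, and that the total collapses to the clean expression $3\cdot 2^{c-5}$ advertised in the overview. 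With the recursion $W(c)=W(c-1)+2W(c-2)+3\cdot 2^{c-5}$ in hand and the homogeneous solution $u_0 2^c + v_0(-1)^c$, I would seek a particular solution of the form $u_1 c\,2^c$ (resonant forcing, since $2^c$ solves the homogeneous equation), fix $u_0,u_1,v_0$ against $W(4)=4$ and $W(5)=10$ from Example~\ref{Ex:WZ4567}, and check the formula $W(c)=c\cdot 2^{c-4}$ directly; here the $(-1)^c$ coefficient should vanish.

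For part (b) I would run the parallel argument, computing $D_z(c)$ from the right table of Lemma~\ref{L:fwzchange}: positive contributions $|K_{23^{[m]}22}(c)|$, $|K_{232}(c)|$, $|K_{23^{[m]}23}(c)|$ and negative contributions $-|K_{23^{[n]}42}(c)|$, $-|K_{23^{[m]}4}(c)|$. The resulting nonhomogeneous term will not be a pure exponential: part (d) of Lemma~\ref{L:Kcardinalities} shows $|K_{23^{[m\ge1]}22}(c)|$ itself satisfies a recursion carrying a $(-1)^c\delta_{0,c\bmod 3}$ term, and this residue-mod-$3$ behavior is exactly what produces the $\delta_{1,c\bmod 3}$ term in the final formula for $Z(c)$. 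I anticipate this bookkeeping is the main obstacle: unlike $W(c)$, several of the sets entering $D_z(c)$ do not telescope to single Jacobsthal numbers, so I must solve an auxiliary recursion (or unfold part (d) into a closed form for $|K_{23^{[m\ge1]}22}(c)|$) before the forcing term for $Z(c)$ is explicit. Once $D_z(c)$ is expressed as a combination of $2^{c}$, $(-1)^c$, and the mod-$3$ indicator, I would again use an ansatz $(\alpha c + \beta)2^{c} + \gamma(-1)^c + (\text{periodic correction})$, match the initial values $Z(6)=2$ and $Z(7)=4$ from Example~\ref{Ex:WZ4567}, and verify that the coefficients collapse to $\tfrac{3c-8}{27}2^{c-4}+\tfrac{14}{27}(-1)^c-\tfrac23(-1)^c\delta_{1,c\bmod 3}$. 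A final induction check confirming the closed form satisfies both the recursion and the base cases would complete the proof.
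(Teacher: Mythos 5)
Your proposal is correct and follows essentially the same route as the paper: the same partition of $K(c)$, the same six sets (with the same multiplicities and signs) assembling $D_w(c)$ and $D_z(c)$, the same reduction via the bijections and Lemma~\ref{L:Kcardinalities} to $D_w(c)=3\cdot 2^{c-5}$ and to a forcing term for $Z(c)$ carrying the $(-1)^c\delta_{0,c\bmod 3}$ correction from part (d), and the same resonant ansatz matched against $W(4)=4$, $W(5)=10$, $Z(6)=2$, $Z(7)=4$. The only cosmetic difference is that the paper cancels the $|K_{23^{[m\ge 1]}22}(\cdot)|$ terms directly using the recursion of Lemma~\ref{L:Kcardinalities}(d) rather than first unfolding it to a closed form, which is one of the two options you mention.
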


\begin{proof}If we partition $K(c)$ into four subsets $K(c) = K_{22}(c) \sqcup K_{\bar 32}(c)  \sqcup  K_{3}(c) \sqcup  K_{\bar  4}(c)$,
then 
\begin{equation*}\label{E:Gammaformula1}
W(c) = W_{22}(c) + W_{\bar 32}(c)  + W_{3}(c) + W_{\bar  4}(c).
\end{equation*}
Lemma~\ref{L:fwzchange} and the first partition described in its proof describe how $w$ changes for each of the subsets in this partition.
For example, 
\[
f_1 : K_{22}(c) = \Gray{K_{222}(c)} \sqcup K_{\bar 322}(c) \rightarrow \Gray{K_2(c-2)} \sqcup K_{\bar 3}(c-2)
= K(c-2)
\]
is a bijection where $\Delta w = 0$ for each vector in $K_{222}(c)$ and $\Delta w = 1$ for each vector in $K_{\bar 322}(c)$. Therefore, 
\[
W_{22}(c) = W_2(c-2) + W_{\bar 3}(c-2) + |K_{\bar 322}(c)| = W(c-2) + |K_{\bar 322}(c)|.
\]
By similar arguments applied to $W_{\bar 32}(c)$,  $W_{3}(c)$, and $W_{\bar  4}(c)$, we have
\begin{align}
\nonumber
W(c) =&
{W(c-2) + |K_{\bar 322}(c)|} + W_2(c-1) + 2|K_{232}(c)|+|K_{\bar 332}(c)| \\ \nonumber
 & + W_{\bar 3}(c-1) + |K_{\bar 323}(c)| + |K_{\bar 33}(c)| + 
{W(c-2) +|K_{24}(c)|} \\
\nonumber
=& \ W(c-1) + 2 W(c-2)  \\
\nonumber
&  + 2|K_{232}(c)| + |K_{\bar 332}(c)| +|K_{\bar 322}(c)| + |K_{\bar 323}(c)| + |K_{\bar 33}(c)|+|K_{24}(c)| . \label{wk}
\end{align}
To simplify the term $D_w(c) =  2|K_{232}(c)| + |K_{\bar 332}(c)| +|K_{\bar 322}(c)|+ |K_{\bar 323}(c)| + |K_{\bar 33}(c)|+|K_{24}(c)|$ on the righthand side of the equation, we use the bijections $f_1$, $f_2$, $f_3$, and $f_4$.
\begin{align*}
D_w(c) &=   2|K_{232}(c)| + |K_{\bar 332}(c)| +|K_{\bar 322}(c)|+ |K_{\bar 323}(c)| + |K_{\bar 33}(c)| + |K_{24}(c)| \\
&=    2|K_{222}(c-1)| + |K_{\bar 322}(c-1)| +|K_{\bar 3}(c-2)|+ |K_{\bar 33}(c-1)| + |K_{\bar4}(c-1)|\\
& ~ + |K_{22}(c-2)| \\
&=   2|K_{2}(c-3)| + |K_{\bar 3}(c-3)| +|K_{\bar 3}(c-2)| + |K_{\bar 4}(c-2)| + |K_{\bar4}(c-1)| + |K_{22}(c-2)|. 
\end{align*}

\noindent
By Lemma~\ref{L:Kcardinalities}, this gives
\[D_w(c) = j(c-4)+3\cdot j(c-5)+4\cdot j(c-6)+2\cdot j(c-5)=3\cdot 2^{c-5}.\]
Hence, we have the recursion $W(c) = W(c-1) + 2 W(c-2) + 3 \cdot 2^{c-5}$. Because the homogeneous recursion $W(c) = W(c-1) + 2 W(c-2)$ has a general solution $u_0(2^{c}) + v_0(-1)^c$, we solve the non-homogeneous recursion $W(c) = W(c-1) + 2 W(c-2) + 3 \cdot 2^{c-5}$ by determining appropriate values of coefficients in a solution of the form $(u_0+u_1 c)(2^{c}) + v_0(-1)^c$.
In Example~\ref{Ex:WZ4567}, we found the initial values $W(4) = 4$ and $W(5)=10$. 
 This gives the closed form $W(c) = c \cdot 2^{c-4}$ for $c \ge 4$.

In a similar manner,
\begin{equation*}\label{E:Gammaformula2}
Z(c) = Z_{22}(c) + Z_{\bar 32}(c)  + Z_{3}(c) + Z_{\bar  4}(c).
\end{equation*}
By Lemmas~\ref{L:fbijections} and \ref{L:fwzchange}, we then have
\begin{align}
\nonumber Z(c) =&\    Z(c-2) + |K_{2 3^{[m\ge 1]}22}(c)| + Z_{2}(c-1) + |K_{232}(c)| - |K_{2 3^{[n\ge 0]}42}(c)| +  \\
\nonumber & +  Z_{\bar 3}(c-1) + |K_{2 3^{[m\ge 1]}23}(c)| +Z(c-2) - |K_{2 3^{[m\ge 1]}4}(c)| \\
\nonumber
=& \ Z(c-1) + 2 Z(c-2)  \\
\nonumber&  + |K_{2 3^{[m\ge 1]}22}(c)| +|K_{232}(c)| - |K_{2 3^{[n\ge 0]}42}(c)|+  |K_{2 3^{[m\ge 1]}23}(c)| - |K_{2 3^{[m\ge 1]}4}(c)|.  \label{zk}
\end{align}
In order to simplify 
\[
D_z(c) = |K_{2 3^{[m\ge 1]}22}(c)| +|K_{232}(c)| - |K_{2 3^{[n\ge 0]}42}(c)|+  |K_{2 3^{[m\ge 1]}23}(c)| - |K_{2 3^{[m\ge 1]}4}(c)|
\] we again use the bijections $f_1$, $f_2$, $f_3$, $f_4$ and Lemma~\ref{L:Kcardinalities}. It will be helpful to  further partition  $K_{2 3^{[n\ge 0]}42}(c)$, $K_{2 3^{[m\ge 1]}23}(c)$, and $K_{23^{[m\ge 1]}4}(c)$ for the bijections below. These bijections will allow us to count the contributions to $D_z(c)$ in terms of the function $j(c)$ and $|K_{2 3^{[m\ge 1]}22}(k)|$.
\begin{align*}
f_1  \circ f_2 &: K_{232}(c) \rightarrow K_2(c-3) \\
f_1 \circ f_2 \circ f_2 &: K_{242}(c) \rightarrow K_2(c-4) \\
f_2 \circ f_2 &: K_{2 3^{[m\ge 1]}42}(c) \rightarrow K_{2 3^{[m\ge 1]}22}(c-2), \\
f_4 \circ f_3 \circ f_3 &: K_{2323}(c) \rightarrow K_{22}(c-4)\\
f_3 \circ f_3 &: K_{23^{[p\ge 2]}23}(c) \rightarrow K_{23^{[m \ge 1]}4}(c-2),\\
f_1 \circ f_2  \circ f_4 &: K_{234}(c) \rightarrow K_2(c-5) \\
f_2  \circ f_4 &: K_{23^{[p\ge 2]}4}(c) \rightarrow K_{23^{[m \ge 1]}22}(c-3).
\end{align*}

We now simplify $D_z(c)$ as follows:
\begin{align*}
D_z(c) =&\ |K_{2 3^{[m\ge 1]}22}(c)| + |K_{232}(c)| - |K_{2 3^{[n\ge 0]}42}(c)| + |K_{2 3^{[m\ge 1]}23}(c)| - |K_{2 3^{[m\ge 1]}4}(c)| \\
=&\ |K_{2 3^{[m\ge 1]}22}(c)| + |K_{232}(c)| -|K_{242}(c)| - |K_{2 3^{[m\ge 1]}42}(c)| + |K_{2323}(c)|+ |K_{2 3^{[p\ge 2]}23}(c)| \\
&- |K_{234}(c)|- |K_{2 3^{[p\ge 2]}4}(c)| \\
=&\ |K_{2 3^{[m\ge 1]}22}(c)| + |K_2(c-3)| -|K_{2}(c-4)| - |K_{2 3^{[m\ge 1]}22}(c-2)| + |K_{22}(c-4)| \\
& + |K_{2 3^{[m\ge 1]}4}(c-2)| - |K_{2}(c-5)|- |K_{2 3^{[m\ge 1]}22}(c-3)| \\
=&\ |K_{2 3^{[m\ge 1]}22}(c)| +j(c-5) -j(c-6) - |K_{2 3^{[m\ge 1]}22}(c-2)| + 2j(c-8)+ |K_{234}(c-2)| \\
& + |K_{2 3^{[p\ge 2]}4}(c-2)|- j(c-7)- |K_{2 3^{[m\ge 1]}22}(c-3)| \\
=&\ |K_{2 3^{[m\ge 1]}22}(c)| +j(c-5) -j(c-6) - |K_{2 3^{[m\ge 1]}22}(c-2)| + 2j(c-8)+ |K_{2}(c-7)| \\
& + |K_{2 3^{[m\ge 1]}22}(c-5)|- j(c-7)- |K_{2 3^{[m\ge 1]}22}(c-3)| \\
=&\ |K_{2 3^{[m\ge 1]}22}(c)| +j(c-5) - \Big[j(c-6) + |K_{2 3^{[m\ge 1]}22}(c-2)| \Big] + 2j(c-8)\\
& + \Big[ j(c-9)+ |K_{2 3^{[m\ge 1]}22}(c-5)| \Big]- j(c-7)- |K_{2 3^{[m\ge 1]}22}(c-3)|.
\end{align*}
Using Lemma~\ref{L:Kcardinalities} to rewrite 
\begin{align*}
j(c-6) + |K_{2 3^{[m\ge 1]}22}(c-2)| &= |K_{2 3^{[m\ge 1]}22}(c)| - (-1)^c \delta_{0,c\text{ mod }3} \ \mbox{and} \\
j(c-9)+ |K_{2 3^{[m\ge 1]}22}(c-5)| &= |K_{2 3^{[m\ge 1]}22}(c-3)|- (-1)^{c-3} \delta_{0,c\text{ mod }3},
\end{align*}
we further simplify:
\begin{align*}
D_z(c) =&    |K_{2 3^{[m\ge 1]}22}(c)| +j(c-5) - \Big[|K_{2 3^{[m\ge 1]}22}(c)| - (-1)^c \delta_{0,c\text{ mod }3}\Big]+ 2j(c-8)\\
&+ \Big[|K_{2 3^{[m\ge 1]}22}(c-3)|- (-1)^{c-3} \delta_{0,c\text{ mod }3}\Big] - j(c-7)- |K_{2 3^{[m\ge 1]}22}(c-3)| \\
=&\ j(c-5) - j(c-7) + 2j(c-8) + 2 (-1)^c \delta_{0,c\text{ mod }3} \\
=&\ 2j(c-6) + 2 (-1)^c \delta_{0,c\text{ mod }3}.
\end{align*}
Therefore:
\[
Z(c) = Z(c-1) +2 Z(c-2) + 2j(c-6) + 2 (-1)^c \delta_{0,c\text{ mod }3}.
\]
As with $W(c)$, the recursion $Z(c) = Z(c-1) + 2 Z(c-2)$ has a general homogeneous solution $u_0(2^{c}) + v_0(-1)^c$. Since the heterogeneous part of the recursion has the form 
\[
\frac{2}{3}(2^{c-6})-\frac{2}{3}(-1)^{c-6}+  2 (-1)^c \delta_{0,c\text{ mod }3},
\]
we guess a particular solution of the form 
\[
u_1 c (2^{c-6})+ v_1 c (-1)^{c-6}+  r_0 (-1)^c \delta_{0,c\text{ mod }3} + s_0 (-1)^c \delta_{1,c\text{ mod }3} +t_0 (-1)^c \delta_{2,c\text{ mod }3}.
\]
Using the recursion and the initial conditions $Z(6)=2$ and $Z(7)=4$ from Example~\ref{Ex:WZ4567}, we solve for the coefficients to obtain the closed form
\[\pushQED\qed
Z(c) = \frac{3c-8}{27} (2^{c -4}) + \frac{14}{27} (-1)^{c} -\frac{2}{3}(-1)^c \delta_{1,c\text{ mod }3}.
\qedhere \]
\end{proof}

\subsection{The palindromic contribution to average unoriented genus}

We now turn our attention to palindromes in order to determine how to adjust the values of $W(c)$ and $Z(c)$ in 
order to reflect the fact that 
$(b_1,\dots,b_k)$ and $(b_k,\dots, b_1)$ in $K(c)$ represent equivalent knots in $\mathcal K(c)$.
Let
\[K^P(c)=\{\mathbf{b}\in K(c):~\mathbf{b}\text{ is palindromic}\}.\]
Furthermore, let
\[K^P_{\mathbf{t}}(c)=\{\mathbf{b}\in K^P(c)\cap K_{\mathbf{t}}(c):~\text{length}(\mathbf{b})\geq 2\, \text{length}(\mathbf{t})-1\}.\]
In other words, $K^P_{\mathbf{t}}(c)$ consists of all palindromic tuples in $K(c)$ that end with $\mathbf t = (b_j, \dots, b_k)$ and, therefore, start with $(b_k, \dots, b_j)$. However, note that these two sub-vectors might overlap at the middle spot, if there is one. For example, $(2,3,3,3,2) \in K^P_{332}(9)$. Given any palindrome $\mathbf b$, we will also write
\[
K^P_{\{ \mathbf b\}}(c) = K^P(c) \cap \{\mathbf b\}
\]
and, as with $K_{\{\mathbf b \}}(c)$, notice that this set might be empty. As in the last section we retain the notation that $b^{[m]}$ represents a string of $m$ $b$'s in $\mathbf t$ and that $b^{[m\ge 1]}$ in $\mathbf t$ represents a union of sets where $m$ takes on all values $m \ge 1$.

\vspace{10pt}
\noindent
The functions $f_1$, $f_2$, $f_3$, and $f_4$ can now be modified to preserve palindromes in $K^P(c)$. In particular, for $c \ge 7$, we define four functions
\begin{align*}
p_1 :&\, K^P_{22}(c) \rightarrow K^P(c-4) \\
p_2 :&\,  K^P_{\bar 32}(c) \rightarrow K^P_{2}(c-2) \\
p_3 :&\, K^P_{3}(c) \rightarrow K^P_{\bar 3}(c-2) \\
p_4 :&\, K^P_{\bar 4}(c) \rightarrow K^P(c-4)
\end{align*}
as follows where $k$ is the length of the vector:
\[
\arraycolsep=1.4pt\def\arraystretch{1}
\begin{array}{lll}
p_1(2,2,b_3,\dots ,b_3,2,2) &  =  (b_3, \dots, b_3), & \mbox{for $k \ge 5$}, \\
p_2(2,b_{2},\dots,b_{2},2) & = \left\{ \begin{array}{l} (2, b_{2}-1, \dots, b_{2}-1,2), \\ (2, b_2-2,2), \end{array} \right. & \begin{array}{l}\mbox{for $k \ge 4$ and $b_2 \ge 3$,} \\ \mbox{for $k = 3$ and $b_2 \ge 4$,} \end{array} \\
p_3(3,b_2,\dots,b_2,3) & =  \left\{ \begin{array}{l} (b_2+1, \dots, b_2+1), \\ (b_1+2), \end{array} \right. &  \begin{array}{l} \mbox{for $k \ge 4$}, \\ \mbox{for $k = 3$,} \end{array} \\
p_4(b_1,b_2,\dots,b_2,b_1) & =  \left\{ \begin{array}{l} (b_1-2, b_2, \dots, b_2,b_1-2), \\ (b_1-4), \end{array} \right. & \begin{array}{l} \mbox{for $k \ge 2$ and $b_1  \ge 4$,} \\ \mbox{for $k = 1$ and $b_1 \ge 7$.} \end{array}.
\end{array}
\]

\noindent Using a figure similar to Figure~\ref{Fi:f1f2f3f4}, we obtain the following lemma.

\begin{lemma}\label{L:pbijections}
The functions $p_1$, $p_2$, $p_3$, and $p_4$ defined above are bijections. 
Hence, for $c \ge 7$, the function $g^P=p_1\cup p_2\cup p_3\cup p_4 : K^P(c) \rightarrow K^P(c-2) \sqcup K^P(c-4) \sqcup K^P(c-4)$
is a bijection. Consequently, $K^P(c) = \varnothing$ when $c$ is even.
\end{lemma}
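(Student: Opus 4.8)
The plan is to mirror the proof of Lemma~\ref{L:fbijections}: for each $p_i$ I would exhibit an explicit two-sided inverse and then read off from a figure analogous to Figure~\ref{Fi:f1f2f3f4} that $p_i$ carries palindromic knot-vectors to palindromic knot-vectors with the advertised crossing number. The governing principle is that each $p_i$ is the \emph{symmetric} version of the corresponding $f_i$, applying the right-end move of $f_i$ together with its mirror image at the left end; since a palindrome is determined by its first half, performing an $f_i$-type move at the right end and its mirror at the left end automatically preserves the palindrome property. The crossing-number bookkeeping is then routine from Equation~\eqref{E:subformcrossingnum}: $p_2$ and $p_3$ alter both ends for a net change of $-2$ (hence land in $K^P(c-2)$), while $p_1$ and $p_4$ strip or decrement at both ends for a net $-4$ (hence land in $K^P(c-4)$). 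Knot-preservation is read off from the shading exactly as for the $f_i$.

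Concretely, $p_i^{-1}$ applies $f_i^{-1}$ at the right end and its mirror at the left. For instance $p_1^{-1}(b_1,\dots,b_k)=(2,2,b_1,\dots,b_k,2,2)$ and $p_4^{-1}(b_1,\dots,b_k)=(b_1+2,b_2,\dots,b_{k-1},b_k+2)$, while $p_2^{-1}$ increments the two entries neighboring the terminal $2$'s and $p_3^{-1}$ decrements the two terminal entries by one and then brackets the vector with a pair of $3$'s. In each case the short ($k\le 3$, resp. $k=1$) formulas in the statement are exactly the specializations in which the left and right moves collide at the central entry. Assembling these, I would note that the domain partitions as $K^P(c)=K^P_{22}(c)\sqcup K^P_{\bar 32}(c)\sqcup K^P_3(c)\sqcup K^P_{\bar 4}(c)$, that $p_1$ and $p_4$ give the two disjoint copies of $K^P(c-4)$, and that $p_2,p_3$ together surject onto $K^P(c-2)=K^P_2(c-2)\sqcup K^P_{\bar 3}(c-2)$ (a trivial partition of $K^P(c-2)$ by last entry). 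This yields the bijection $g^P$ and, on cardinalities, $|K^P(c)|=|K^P(c-2)|+2|K^P(c-4)|$ for $c\ge 7$.

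For the final clause I would argue $K^P(c)=\varnothing$ for even $c$ directly by a parity computation. First, a palindrome of even length has $\sum b_i$ even, so $c=\sum b_i-(k-1)$ is odd; hence an even-$c$ palindrome must have odd length $2m+1$, and then $c\equiv b_{m+1}\pmod 2$ forces the central entry $b_{m+1}$ to be even. It remains to see that any odd-length palindrome with even central entry represents a $2$-component link, i.e.\ has $q$ even. Using the standard representation in which the denominator $q$ of $[b_1,\dots,b_k]_-$ equals the $(1,1)$-entry of $\prod_{i=1}^k M(b_i)$ with $M(b)=\left(\begin{smallmatrix} b & -1\\ 1 & 0\end{smallmatrix}\right)$, the palindrome gives the product $A\,M(b_{m+1})\,A^{\mathrm{rev}}$, where $A=\prod_{i=1}^m M(b_i)$ and $A^{\mathrm{rev}}$ reverses the factors. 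Reducing mod $2$, each factor is symmetric, so $A^{\mathrm{rev}}\equiv A^{T}$, and an even central entry makes $M(b_{m+1})\equiv\left(\begin{smallmatrix}0&1\\1&0\end{smallmatrix}\right)$; a direct computation shows the $(1,1)$-entry of $A\left(\begin{smallmatrix}0&1\\1&0\end{smallmatrix}\right)A^{T}$ is $\equiv 0\pmod 2$. Thus $q$ is even, the link has two components, and no such vector lies in $K^P(c)$. (Equivalently, one may deduce this emptiness by induction from $|K^P(c)|=|K^P(c-2)|+2|K^P(c-4)|$ after checking the base cases $K^P(4)=K^P(6)=\varnothing$.)

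The main obstacle I anticipate is not the generic bijection but the boundary bookkeeping in the short, overlapping cases: where the two symmetric ends of the palindrome meet at or near the central entry, the piecewise definitions of $p_2$, $p_3$, $p_4$ and of their inverses must be matched exactly so that the domains and codomains partition cleanly and so that knot-ness is preserved at these extremal lengths. Confirming that each short-case image still represents a knot and carries the correct crossing number is the delicate part; the parity input for the emptiness claim is the only genuinely non-combinatorial ingredient, and it is supplied cleanly by the mod-$2$ matrix computation above.
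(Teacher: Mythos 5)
Your proposal is correct. For the bijection claim it is essentially the paper's argument: the paper defers to a figure analogous to Figure~\ref{Fi:f1f2f3f4}, and what that figure encodes is exactly what you make explicit --- each $p_i$ performs the $f_i$-move at the right end together with its mirror at the left end, the displayed ``short'' cases ($k=3$ for $p_2,p_3$ and $k=1$ for $p_4$) being the collisions at the central entry; your inverse formulas, the crossing-number counts ($-2$ for $p_2,p_3$, $-4$ for $p_1,p_4$), and the matching of codomains via the partition $K^P(c-2)=K^P_2(c-2)\sqcup K^P_{\bar 3}(c-2)$ are all as intended. Where you genuinely diverge is the final clause: the paper deduces $K^P(c)=\varnothing$ for even $c$ from the recursion $|K^P(c)|=|K^P(c-2)|+2|K^P(c-4)|$ together with the base cases $K^P(4)=K^P(6)=\varnothing$ (your parenthetical alternative), whereas your primary argument is a direct mod-$2$ computation: an even-$c$ palindrome must have odd length and even central entry, and then the continuant identity $q=\bigl(\prod_i M(b_i)\bigr)_{11}$ with $M(b)=\left(\begin{smallmatrix} b & -1\\ 1 & 0\end{smallmatrix}\right)$ gives $q\equiv\bigl(A\left(\begin{smallmatrix}0&1\\1&0\end{smallmatrix}\right)A^{T}\bigr)_{11}\equiv 2A_{11}A_{12}\equiv 0\pmod 2$, so the vector represents a two-component link. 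Your argument is checkable and self-contained (it does not depend on having the bijections in hand, and it explains \emph{why} the base cases vanish), at the cost of importing the matrix description of $q$, which the paper does not otherwise use; the paper's route is shorter given that the recursion has already been established.
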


The final remark in Lemma~\ref{L:pbijections} is a consequence of the recursion induced by the bijection and the initial conditions $K^P(4) = K^P(6) = \varnothing$ (see Example~\ref{Ex:Kp7bijectionexample} below). 

\begin{example}
\label{Ex:Kp7bijectionexample} 
The bijection $g^P:K(7) \rightarrow K(5) \sqcup K(3) \sqcup K(3)$ is shown explicitly in Table~\ref{T:Kp7bijection}. On the left the partition $K^P(7) = K^P_{22}(7) \sqcup K^P_{\bar 32}(7) \sqcup K^P_3(7) \sqcup K^P_{\bar 4}(7)$ is depicted in four boxes. The images $p_1(K^P_{22}(7))=K(3)$ and $p_4(K^P_{\bar 4}(7))=K^P(3)$ are depicted as the topmost and bottommost boxes on the right. The image $p_2 \cup p_3 (K^P_{\bar 32}(7) \sqcup K^P_3(7)) = K^P(5)$ is depicted in the middle box on the right.

\begin{table}[h]
\[
\begin{array}{l|c|c|c|c} \cline{2-2}\cline{4-4}
\multirow{2}{3em}{$K^P_{22}(7)$} & (2,2,3,2,2) & \multirow{2}{2em}{$\stackrel{p_1}{\longrightarrow}$} & (3) & \multirow{2}{4em}{$K^P(3)$} \\
& (2,2,2,2,2,2) & & (2,2) & \\ [2pt] \cline{2-2} \cline{4-4} 
K^P_{\bar 32}(7)\ & (2,3,3,2) & \hspace{-5pt}\stackrel{p_2}{\longrightarrow} & (2,2,2,2) & \multirow{2}{4em}{$K^P(5)$}\\
[2pt]\cline{2-2} \arrayrulecolor{lightgray}
 \cline{4-4} \arrayrulecolor{black}
K^P_{3}(7) & (3,3,3) & \hspace{-5pt}\stackrel{p_3}{\longrightarrow} & (5) & \\ [2pt] \cline{2-2 } \cline{4-4}
\multirow{2}{3em}{$K^P_{\bar 4}(7)$} & (4,4) & \multirow{2}{2em}{$\stackrel{p_4}{\longrightarrow}$} & (2,2) & \multirow{2}{4em}{$K^P(3)$} \\
& (7) & & (3) & \\ \cline{2-2} \cline{4-4}
\end{array}
\]
\caption{The bijection $g^P:K^P(7) \rightarrow K^P(3) \sqcup K^P(5) \sqcup K^P(3)$}
\label{T:Kp7bijection}
\end{table}

\end{example}

\begin{lemma}\label{pxyzchange}
Given $\mathbf b \in \mbox{domain}(p_j)$ for $1 \le j \le 4$, let $\Delta w = w (\mathbf b) - w (p_j(\mathbf b))$ and $\Delta z = z (\mathbf b) - z (p_j(\mathbf b))$. Then $\Delta w$ and $\Delta z$ as given in the tables  below where $n \ge 0$ and $m \ge 1$ are integers. For any vector in the domain of $p_j$ that is not listed in the table we have $\Delta w = 0$ or $\Delta z = 0$.

\[
\renewcommand{\arraystretch}{1.25}
\begin{array}{c|r|r|c}
p_j & \multicolumn{1}{c|}{\mathbf b}  & \multicolumn{1}{c|}{p_j({\mathbf b})} &  \Delta w \\ \hline
p_1 & (\dots, \bar 3,2,2) &  (\dots, \bar 3) & 2  \\
p_2 & (\dots, 2,3,2) &  (\dots, 2,2,2) & 4  \\
p_2 & (2,3,3,2) &  (2,2,2,2) & 3  \\
p_2 & (\dots, \bar 3,3,2) &  (\dots, \bar 3,2,2) & 2  \\
p_3 & (\dots, \bar 3,2,3) &  (\dots, \bar 3,3) & 2  \\
p_3 & (\dots, \bar 3,3) &  (\dots, \bar 4) & 2  \\
p_4 & (\dots, 2,4) &  (\dots, 2,2) & 2 \\
\multicolumn{4}{c}{} \\
\multicolumn{4}{c}{} \\
\multicolumn{4}{c}{} \\
\multicolumn{4}{c}{} \\
\end{array}
\hspace{.35in}
\begin{array}{c|r|r|r}
p_j & \multicolumn{1}{c|}{\mathbf b}  & \multicolumn{1}{c|}{p_j(\mathbf b)} & \Delta z \\ \hline
p_1 & (2, 2,3^{[m]},2,2)  &  (3^{[m]}) &  1 \\
p_1 & (\dots, 2,3^{[m]},2,2)  &  (\dots, 2,3^{[m]}) &  2 \\
p_2 & (2,3,3,2) & (2,2,2,2) & 1 \\
p_2 & (\dots, 2,3,2) &  (\dots, 2,2,2) &  2 \\
p_2 & (2, 4, 3^{[n]},4,2) & (2,3^{[n+2]},2) & -1 \\
p_2 & (\dots, 2,3^{[n]},4,2) &  (\dots, 2,3^{[n+1]},2) &  -2 \\
p_3 & (3, 2,3^{[m]},2,3) &  (3^{[m+2]}) &  1 \\
p_3 & (\dots, 2,3^{[m]},2,3) &  (\dots, 2,3^{[m+1]}) &  2 \\
p_4 & (\dots, 2,3^{[m]},4) &  (\dots, 2,3^{[m]},2) &  -2 \\
p_4 & (4,3^{[m]},4) &  (2,3^{[m]},2) &  -1 \\
\multicolumn{4}{c}{}
\end{array}
\renewcommand{\arraystretch}{1}
\]
\end{lemma}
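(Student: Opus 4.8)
The plan is to leverage the fact, visible from the definitions in Lemma~\ref{L:pbijections}, that each palindrome-preserving map $p_j$ performs the single-end operation $f_j$ of Lemma~\ref{L:fwzchange} on the right-hand end of a palindromic vector $\mathbf b$ together with its mirror image on the left-hand end: $p_1$ strips the leading and trailing pairs of $2$'s, $p_2$ decrements the second and second-to-last entries, $p_3$ absorbs the leading and trailing $3$'s into their neighbors, and $p_4$ decreases the first and last entries by $2$. Because $w$ and $z$ take equal values on a vector and its reversal, the change these maps induce at the left end will match the change induced at the right end.

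First I would record the \emph{locality} of $w$ and $z$: by Theorem~\ref{T:2BridgeGamma}, $w(\mathbf b)$ counts the indices $i$ with $b_i=2$ for which $i=1$ or $b_{i-1}\neq 2$, plus the indices with $b_i\geq 3$, while $z(\mathbf b)$ counts the indices meeting the flanked-block-of-$3$'s condition; each is a sum of indicators whose value at $i$ depends only on a bounded window of entries around $i$. Consequently $\Delta w$ and $\Delta z$ are additive over disjoint altered regions. When $\mathbf b$ is long enough that the left window and the right window modified by $p_j$ are disjoint, I would write $\Delta w=\Delta_L w+\Delta_R w$, observe that $\Delta_L w=\Delta_R w$ by reversal-invariance, and note that $\Delta_R w$ is precisely the single-end change recorded for $f_j$ in Lemma~\ref{L:fwzchange}; the same holds for $z$. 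This yields $\Delta w$ and $\Delta z$ equal to twice the corresponding values of Lemma~\ref{L:fwzchange}, accounting for every row of the tables whose vector is written with a leading ``$\dots$''.

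The remaining rows are the short, fully symmetric palindromes in which a central block of $2$'s or $3$'s is shared between the two end-windows, so that additivity fails: the families $(2,2,3^{[m]},2,2)$ for $p_1$; $(2,3,3,2)$ and $(2,4,3^{[n]},4,2)$ for $p_2$; $(3,2,3^{[m]},2,3)$ for $p_3$; and $(4,3^{[m]},4)$ for $p_4$. For each I would evaluate $\Delta w$ and $\Delta z$ directly from the definitions. The recurring phenomenon is that a block of $3$'s which in a long palindrome would be flanked by $2$'s separately at each end is here a single shared block flanked only once, so it contributes to $z$ once rather than twice; dually, in the $p_2$ and $p_4$ cases decrementing a $4$ to a $3$ (or a $3$ to a $2$) creates one new flanked block rather than two. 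For example, in $(2,2,3^{[m]},2,2)$ the lone block of $3$'s contributes $1$ to $z(\mathbf b)$ and $0$ to $z(3^{[m]})$, giving $\Delta z=1$ instead of the generic $2$.

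The main obstacle will be exactly this central-block bookkeeping in the overlapping cases: one must check that merging a decremented $3$ into an adjacent run of $2$'s changes the count of maximal $2$-blocks by the right amount, and that a middle block shared by both windows is counted once rather than twice toward $z$. Since there are only finitely many such families, each reducing to a short direct computation with the formulas for $w$ and $z$, these cases together with the uniform doubling argument establish every entry of the two tables.
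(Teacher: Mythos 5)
Your proposal is correct, and it arrives at the same tables, but it organizes the verification differently from the paper. The paper's proof simply redoes the case analysis of Lemma~\ref{L:fwzchange} for the maps $p_i$: it partitions the domain of each $p_i$ by end pattern (including explicit singleton families such as $K^P_{\{(2,3,3,2)\}}(c)$ and $K^P_{\{(2,4,3^{[n\ge0]},4,2)\}}(c)$ alongside the generic ``$\dots$'' subsets), flags the subsets where $\Delta w$ or $\Delta z$ vanishes, and leaves each remaining computation to the reader. You instead factor each $p_j$ as the end operation $f_j$ applied at the right end together with its mirror at the left end, and derive every ``$\dots$'' row as exactly twice the corresponding entry of Lemma~\ref{L:fwzchange} via additivity of $w$ and $z$ over disjoint altered regions plus reversal invariance; only the short, fully symmetric families where the two altered regions overlap then need direct computation. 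This doubling reduction is a genuine economy the paper leaves implicit (it says only that the proof is ``similar to'' the earlier lemma), and one can check against both tables that the generic $p_j$ entries are indeed exactly twice the $f_j$ entries, so your reduction is sound. Two small points to tighten: for $z$ the relevant indicator is not literally a bounded window, since it depends on an entire maximal run of $3$'s, so ``disjoint altered regions'' should be phrased to include the full shared central block (you implicitly do this when you isolate the overlapping families); and to get the lemma's final claim that every unlisted vector has $\Delta w=0$ or $\Delta z=0$, your direct computations must also cover the overlapping families that do \emph{not} appear in the tables (e.g.\ palindromes such as $(\bar4,3^{[n]},2,2,\dots)$-type endings), not only the listed ones --- this is still a finite check, matching the gray subsets in the paper's partitions.
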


\begin{proof} The proof is similar to that of Lemma~\ref{L:fwzchange}. Here, we use the following partitions of the domains of the functions $p_i$.  For $\Delta w$, we use these partitions:
\begin{align*}
\text{for }p_1,&~K^P_{22}(c)=\Gray{K^P_{222}(c)}\sqcup K^P_{\bar322}(c),\\
\text{for }p_2,&~K^P_{\bar{3}2}(c)=K^P_{232}(c) \sqcup K^P_{\{(2, 3, 3, 2)\}}(c)\sqcup K^P_{\bar3 32}(c) \sqcup \Gray{K^P_{\bar{4}2}(c)},\\
\text{for }p_3,&~K^P_{3}(c)=\Gray{K^P_{223}(c)}\sqcup K^P_{\bar{3}23}(c)\sqcup K^P_{\bar{3}3}(c),\text{ and}\\
\text{for }p_4,&~K_{\bar{4}}(c)=K_{24}(c)\sqcup \Gray{K_{\bar{3}4}(c)}\sqcup \Gray{K_{\bar{5}}(c)}.
\end{align*}
Like before, the subsets shaded gray are those where $\Delta w=0$, and the subsets shaded black are those which appear in the table above. Similarly, for $\Delta z$, we use these partitions:
\begin{align*}
\text{for }p_1,&~K^P_{22}(c)=K^P_{\{(2,2,3^{[m\ge 1]},2,2)\}}(c) \sqcup{K^P_{2 3^{[m\ge 1]} 2 2}(c)}\sqcup\Gray{K^P_{\bar 4 3^{[n\ge 1]} 2 2}(c)},\\
\text{for }p_2,&~K^P_{\bar{3}2}(c)=K^P_{\{(2,3,3,2)\}}(c) \sqcup K^P_{232}(c)\sqcup\Gray{K_{\bar 3 3 2}(c)}\sqcup K^P_{\{(2,4,3^{[n\ge 0]},4,2)\}}(c) \sqcup K^P_{23^{[n\ge 0]}42}(c)\, \sqcup\\
&~\phantom{K^P_{\bar{3}2}(c)=\ } \Gray{K^P_{\bar 4 3^{[n\ge 0]}4 2}(c)}\sqcup\Gray{K^P_{\bar 5 2}(c)},\\
\text{for }p_3,&~K^P_{3}(c)=\Gray{K^P_{2 2 3}(c)}\sqcup K^P_{\{(3,2,3^{[m \ge 1]},2,3)\}}(c)\sqcup{K^P_{2 3^{[m\ge 1]} 2 3}(c)}\sqcup \Gray{K^P_{\bar 4 3^{[n\ge 0]} 2 3}(c)}\sqcup \Gray{K^P_{\bar 3 3}(c)},\text{ and}\\
\text{for }p_4,&~K^P_{\bar 4}(c)=\Gray{K^P_{2 4}(c)}\sqcup {K^P_{2 3^{[m\ge 1]} 4}(c)}\sqcup K^P_{\{(4, 3^{[m\ge 1]}, 4)\}}(c) \sqcup \Gray{K^P_{\bar 4 3^{[n\ge 0]} 4}(c)}\sqcup \Gray{K^P_{\bar 5}(c)}.\\
\end{align*}
The calculations of $\Delta w$ and $\Delta z$ are left to the reader.
\end{proof}

The following lemma is required to determine $W^P(c)$ and $Z^P(c)$ in Proposition~\ref{P:WpZpformulas}.

\begin{lemma}\label{KpcTpc}
We have the following identities on cardinalities of sets of palindromes for $c$ odd, $d = \frac{c-1}{2}$, and $j(d) = \frac{1}{3}(2^d-(-1)^d)$:
\begin{itemize}
\item[(a)] $\displaystyle |K^P(c)| = 2j(d)$ for $c \ge 3$. \\
\item[(b)] $|K^P_{22}(c)| = |K^P_{\bar 4}(c)| = \displaystyle 2j (d-2)$, for $c \ge 7$.\\
\item[(c)] $|K^P_{2}(c)| = |K^P_{\bar 3}(c)| = \displaystyle j (d)$, for $c \ge 3$. \\
\item[(d)] $|K^P_{\{(2,3^{[d-1]},2)\}}(c)| = |K^P_{\{( 3^{[d]} )\} }(c)| = 1 - \delta_{2, d
\ \mbox{\scriptsize mod}\ 3}$, for $c \ge 3$.
\\
\item[(e)] $|K^P_{2 3^{[m \ge 1]} 22}(c)| =  |K^P_{2 3^{[m \ge 1]} 22}(c-4)| + j(d-4) +\delta(d)$ for $c \ge 17$ and where \\ \\
$\delta(d) = \delta_{1,d~\mbox{\scriptsize mod}~6} + \delta_{2,d~\mbox{\scriptsize mod}~6}-\delta_{3,d~\mbox{\scriptsize mod}~6}-\delta_{5,d~\mbox{\scriptsize mod}~6}$
\end{itemize}
\end{lemma}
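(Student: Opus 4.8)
The plan is to run all five parts off the palindrome-preserving bijection $g^P = p_1 \cup p_2 \cup p_3 \cup p_4$ of Lemma~\ref{L:pbijections}, in exact analogy with how Lemma~\ref{L:Kcardinalities} is deduced from $g = f_1\cup\cdots\cup f_4$. For part (a), the bijection $g^P\colon K^P(c)\to K^P(c-2)\sqcup K^P(c-4)\sqcup K^P(c-4)$ immediately gives $|K^P(c)| = |K^P(c-2)| + 2|K^P(c-4)|$; writing $d=\frac{c-1}{2}$ this is the Jacobsthal recursion $a(d)=a(d-1)+2a(d-2)$, and the base cases $|K^P(3)|=|K^P(5)|=2$ (from Example~\ref{Ex:Kp7bijectionexample}) pin down the solution $2j(d)$. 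Part (b) is then immediate, since $p_1$ and $p_4$ biject $K^P_{22}(c)$ and $K^P_{\bar 4}(c)$ with $K^P(c-4)$, so both have size $|K^P(c-4)|=2j(d-2)$ by (a). For part (c) I would induct on $c$: for $c\geq 5$ every palindrome ending in $2$ has length at least $3$, so $K^P_2(c)=K^P_{22}(c)\sqcup K^P_{\bar 32}(c)$; combining (b) with the bijection $p_2\colon K^P_{\bar 32}(c)\to K^P_2(c-2)$ gives $|K^P_2(c)| = 2j(d-2)+|K^P_2(c-2)| = 2j(d-2)+j(d-1)=j(d)$, with base case $|K^P_2(3)|=1=j(1)$; the argument for $K^P_{\bar 3}$ is symmetric.

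Part (d) is of a different flavor: each of $K^P_{\{(2,3^{[d-1]},2)\}}(c)$ and $K^P_{\{(3^{[d]})\}}(c)$ is a single vector intersected with $K^P(c)$, hence has cardinality $1$ or $0$ according to whether the displayed vector represents a knot, i.e.\ whether its denominator $q$ is odd. I would compute $q$ directly from the subtractive continued fraction. Writing $m_i$ for the denominator of the tail $[b_i,\ldots,b_k]_-$, one has $m_i = b_i m_{i+1} - m_{i+2}$ with $m_{k+1}=1$ and $m_{k+2}=0$; specializing $b_i=3$ gives the linear recursion $m_i = 3m_{i+1}-m_{i+2}$, whose solution is the even-indexed Fibonacci bisection $1,3,8,21,55,\dots$. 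A short computation then shows that both $[3^{[d]}]_-$ and $[2,3^{[d-1]},2]_-$ have denominator $q=F_{2d+2}$. Since $F_n$ is even exactly when $3\mid n$, the number $F_{2d+2}$ is even precisely when $d\equiv 2\pmod 3$, so each vector is a knot iff $d\not\equiv 2\pmod 3$, giving the claimed value $1-\delta_{2,d\bmod 3}$.

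Part (e) is the substantive one, and it is the palindromic counterpart of Lemma~\ref{L:Kcardinalities}(d). I would induct on $c$ (equivalently on $d$), with the recursion now stepping $c\to c-4$ (i.e.\ $d\to d-2$) because the $p$-maps act on both ends of a palindrome at once. The plan is to partition $K^P_{2 3^{[m\ge 1]} 22}(c)$ by the length of its central run of $3$'s — separating a few small-$m$ families from the generic tail — and to apply to each family a composition of $p_1,p_2,p_3,p_4$ that strips the symmetric $2\,3^{[\cdot]}\,22$ patterns from both ends, landing in sets whose sizes are controlled by parts (a)-(c) and, for the central/overlap configurations, by part (d). Summing the resulting cardinalities, rewriting $j$-values via the identity $2j(c)-j(c+1)=(-1)^{c+1}$ of Equation~\eqref{E:j(c)property}, and collecting the singleton contributions from (d) produces the recursion $|K^P_{2 3^{[m\ge 1]} 22}(c)| = |K^P_{2 3^{[m\ge 1]} 22}(c-4)| + j(d-4) + \delta(d)$.

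The main obstacle will be the bookkeeping in part (e): because the vectors are palindromic, the front and back runs of $3$'s can meet or overlap in the center once $m$ is comparable to $d$, and it is exactly these degenerate central configurations — counted by the singleton sets of part (d), whose nonemptiness depends on $d\bmod 3$ — that generate the correction term $\delta(d)$. Tracking them carefully, together with the alternating $(-1)^{\cdot}$ signs coming from the Jacobsthal identity, is what upgrades the period-$3$ behavior seen in (d) to the period-$6$ function $\delta(d)=\delta_{1,d\bmod 6}+\delta_{2,d\bmod 6}-\delta_{3,d\bmod 6}-\delta_{5,d\bmod 6}$, and it is also why the stable recursion only takes effect for $c\geq 17$; the finitely many smaller values of $d$ must be verified by hand as base cases.
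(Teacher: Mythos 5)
Your parts (a)--(c) coincide with the paper's argument: the same partition of $K^P(c)$, the same bijections $p_1,\dots,p_4$, the same Jacobsthal recursion in $d=\frac{c-1}{2}$, and the same base cases. Part (d) is where you genuinely diverge. The paper verifies $d=1,2,3$ by hand and then argues geometrically (Figure~\ref{fig:comps}) that inserting a certain tangle takes $(2,3^{[d-1]},2)$ to $(2,3^{[d+2]},2)$ and $(3^{[d]})$ to $(3^{[d+3]})$ while preserving the number of link components, so knottedness is periodic in $d$ with period $3$. You instead compute the denominator of the subtractive continued fraction via the continuant recursion $m_i=b_im_{i+1}-m_{i+2}$, identify it as the Fibonacci number $F_{2d+2}$ for both vectors (indeed $2F_{2d+1}-F_{2d-1}=F_{2d+2}$ handles the $[2,3^{[d-1]},2]_-$ case), and use that $F_n$ is even iff $3\mid n$, so $F_{2d+2}$ is even iff $d\equiv 2\pmod 3$. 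This checks out and is a clean, self-contained alternative; the paper's tangle argument buys uniformity (it needs no number theory and adapts to other insertion patterns), while yours is more explicit.

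For part (e) your plan has the right shape but stops short of the actual mechanism, and your stated source of the correction term is not where it arises in a workable execution. The paper partitions by the length of the terminal (not central) run of $3$'s into the pieces with $m=1,2,3$ and $m\ge 4$, maps them by explicit compositions of the $p_i$ onto $K^P_{\bar 3}(c-8)\sqcup K^P_{22}(c-10)\sqcup K^P_2(c-16)\sqcup K^P_{23^{[m\ge1]}22}(c-12)$, and so first obtains a recursion with step $c\to c-12$; the claimed $c\to c-4$ recursion and the period-$6$ term $\delta(d)$ then fall out by applying the inductive hypothesis twice to rewrite $|K^P_{23^{[m\ge1]}22}(c-12)|$ in terms of $|K^P_{23^{[m\ge1]}22}(c-4)|$, together with $2j(d-7)-j(d-6)=(-1)^{d}$ and the identity $(-1)^d-\delta(d-2)-\delta(d-4)=\delta(d)$. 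No singleton sets from part (d) enter this computation; part (d) is only needed later, in Proposition~\ref{P:WpZpformulas}. So if you pursue your plan of extracting $\delta(d)$ from overlap configurations counted by (d), you will be looking in the wrong place: the bookkeeping you correctly identify as the main obstacle is resolved by telescoping the induction, not by counting degenerate palindromes.
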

In part (e), we have $\delta_{r,d\text{ mod }6}=1$ if $d\equiv r\Mod{6}$ and $\delta_{r,d\text{ mod }6}=0$ otherwise.

\begin{proof}
If $c \ge 7$ is odd, consider the partition 
$
K^P(c) = K^P_{\bar 32}(c) \sqcup K^P_{22}(c) \sqcup K^P_{3}(c) \sqcup K^P_{\bar 4}(c)$.
From Lemma~\ref{L:pbijections} we obtain $|K^P(c)| = |K^P(c-2)|+2|K^P(c-4)|$, and so (a) follows from the initial conditions $|K^P(3)| = |K^P(5)| = 2$ (see Example~\ref{Ex:Kp7bijectionexample}). Formula (b) follows from the bijections $p_1:K^P_{22}(c) \rightarrow K^P(c-4)$ and $p_4:K^P_{\bar 4}(c) \rightarrow K^P(c-4)$ together with part (a). The fomulas in (c) follow by induction. For example, $|K^P_2(3)| = 1 = j(1)$, and from the bijection
\[
p_1 \cup p_2 : K^P_2(c) = K^P_{22}(c) \sqcup K^P_{\bar3 2}(c) \rightarrow  K^P(c-4) \sqcup K^P_2(c-2) 
\]
we have
\[
|K^P_2(c)| = |K^P(c-4)| + |K^P_2(c-2)| =2 j\left( \frac{c-5}{2} \right) + j\left( \frac{c-3}{2}\right)=j\left( \frac{c-1}{2}\right).
\]
The proof for $K^P_{\bar 3}(c)$ is similar.

Now consider the sets $K^P_{\{(2,3^{[d-1]},2)\}}(c)$ and $K^P_{\{(3^{[d]})\}}(c)$. By direct calculation we have 
\[
\begin{array}{c|c|c}
d & K^P_{\{(2,3^{[d-1]},2)\}}(c) & K^P_{\{(3^{[d]})\}}(c) \\ \hline
1 & (2,2) & (3) \\
2 & \varnothing & \varnothing \\
3 & (2,3,3,2) & (3,3,3)
\end{array}
\]
So for these three base cases we have $|K^P_{\{(2,3^{[d-1]},2)\}}(c)|=|K^P_{\{(3^{[d]})\}}(c)|$. Figure \ref{fig:comps} shows that $(2,3^{[d-1]},2)$ and $(2,3^{[d+2]},2)$ are either both knots or both 2-component links; see also Cohen and Krishnan \cite{CK_2015}.  The same is true for $(3^{[d]})$ and $(3^{[d+3]})$. Thus, part (d) is established by induction.

We prove (e) using induction. By direct calculation we have:
\[
\renewcommand{\arraystretch}{1.5}
\begin{array}{c|rrrrrrr}
c & 7 & 9 & 11 & 13 & 15 & 17 & 19 \\ \hline
|K^P_{2 3^{[m\ge 1]}22}(c)| &  0 & 0 & 0 & 1 & 4 & 7 & 14 
\end{array}
\]
and so the recursion is satisfied for odd $c$ with $7 \le c \le 19$. We now induct on $d = \frac{c-1}{2}$ to establish the result for $c \ge 21$. The base cases $d=8$ and $d=9$ are given in above. For the inductive step, consider the partition
\[
K^P_{2 3^{[m\ge 1]}22}(c) = K^P_{2322}(c) \sqcup K^P_{23322}(c) \sqcup K^P_{233322}(c) \sqcup K^P_{2 3^{[\ell \ge 4]}22}(c).
\]
If we apply the bijections $p_3 \circ p_3 \circ p_1$, \, {$p_4 \circ p_3 \circ p_1$}, \, $p_1 \circ p_2 \circ p_4 \circ p_3 \circ p_1$, and $p_2 \circ p_4 \circ p_3 \circ p_1$ to each of the subsets, respectively, we obtain a bijection
\[
K^P_{2 3^{[m\ge 1]}22}(c) \rightarrow K^P_{\bar 3}(c-8) \sqcup K^P_{22}(c-10) \sqcup K^P_2(c-16) \sqcup K^P_{2 3^{[m\ge 1]}22}(c-12).
\]
Therefore, by inductive hypothesis, part (b), and part (c), it follows that
\begin{align*}\pushQED\qed
|K^P_{2 3^{[m\ge 1]}22}(c) | &=  j(d-4) + 2j(d-7)  + j(d-8) + |K^P_{2 3^{[m\ge 1]}22}(c-12) | \\
&=   j(d-4) + 2j(d-7) +  |K^P_{2 3^{[m\ge 1]}22}(c-8)| - \delta(d-4) \\
&=   j(d-4) + 2j(d-7) +  |K^P_{2 3^{[m\ge 1]}22}(c-4)| - j(d-6) - \delta(d-2) - \delta(d-4) \\
&=  |K^P_{2 3^{[m\ge 1]}22}(c-4)| + j(d-4) + (-1)^d - \delta(d-2) - \delta(d-4) \\
&= |K^P_{2 3^{[m\ge 1]}22}(c-4)| + j(d-4) + \delta(d).\qedhere
\end{align*}
\end{proof}

\begin{figure}[h!]
\[\begin{tikzpicture}[scale=.5,thick]

  \draw (0,1) arc (270:90:.4cm and .5cm);
  \draw (0,0) arc (270:90:1.2cm and 1.5cm);
  \draw (0,3) -- (4,3);
  \draw (6,3) -- (9,3);
  \draw (15,3) -- (16,3);
  \draw (15,2) -- (16,2);
  \draw (16,3) arc (90:-90:1.2cm and 1.5cm);
  \draw (16,2) arc (90:-90:.4cm and .5cm);
  
  \begin{knot}[ 	
	%draft mode = crossings,
	consider self intersections,
 	clip width = 3,
 	ignore endpoint intersections = true,
	end tolerance = 2pt
	]
	\flipcrossings{1,3,4,6,9,11,14}
	\strand[looseness=.4] (0,0) to [out = 0, in = 180]
	(2,2) to [out = 0, in = 180]
	(3,2) to [out = 0, in = 180]
	(4,1);
	\strand[looseness=.4] (0,1) to [out = 0, in = 180]
	(1,0) to [out = 0, in = 180]
	(2,0) to [out = 0, in = 180]
	(4,2);
	\strand[looseness=.4] (0,2) to [out = 0, in = 180]
	(1,2) to [out =0, in = 180]
	(3,0) to [out = 0, in = 180]
	(4,0);
	
	\strand[looseness=.4] (6,0) to [out = 0, in = 180]
	(7,0) to [out = 0, in = 180] 
	(9,2);
	\strand[looseness=.4] (6,1) to [out = 0, in = 180]
	(7,2) to [out = 0, in = 180]
	(8,2) to [out = 0, in = 180]
	(9,1);
	\strand[looseness=.4] (6,2) to [out = 0, in = 180]
	(8,0) to [out = 0, in = 180]
	(9,0);
	\strand[looseness=.4] (9,-3) -- (15,-3);
	\strand[blue, looseness=.4] (9,-4) to [out = 0, in = 180]
	(10,-4) to [out = 0, in = 180] 
	(12,-6) to [out = 0, in = 180]
	(13,-6) to [out = 0, in = 180]
	(15,-4);
	\strand[red, looseness=.4] (9,-5) to [out = 0, in = 180]
	(10,-6) to [out = 0, in = 180]
	(11,-6) to [out = 0, in =180]
	(13,-4) to [out = 0, in =180]
	(14,-4) to [out = 0, in = 180]
	(15,-5);
	\strand[black!20!green, looseness=.4] (9,-6) to [out = 0, in = 180]
	(11,-4) to [out = 0, in = 180]
	(12,-4) to [out = 0, in= 180]
	(14,-6) to [out = 0, in =180]
	(15,-6);
	\strand[looseness=.4] (15,0) to [out = 0, in =180]
	(16,1);
	\strand[looseness=.4] (15,1) to [out = 0, in =180]
	(16,0);
	
\end{knot}
	
	\fill (5,1.5) circle (.05cm);
	\fill (4.8,1.5) circle (.05cm);
	\fill (5.2,1.5) circle (.05cm);
	\draw (9,3) -- (15,3);
	\draw[blue] (9,2) -- (15,2);
	\draw[red] (9,1) -- (15,1);
	\draw[black!20!green] (9,0) -- (15,0);
	\draw[thin] (15,-.5) -- (15,3.5);
	\draw[thin] (9,-.5) -- (9,3.5);
	\draw[thin] (15,-2.5) -- (15,-6.5);
	\draw[thin] (9,-2.5) -- (9,-6.5);
	\draw[thick, ->] (12,-2) -- (12,-1);
	
	\draw (.5,1.5) node{$2$};
	\draw (2.5,1.5) node{$3$};
	\draw (7.5,1.5) node{$3$};
	\draw (15.5,1.5) node{$2$};
	
	\draw (9.5,-4.5) node{$3$};
	\draw (11.5,-4.5) node{$3$};
	\draw (13.5,-4.5) node{$3$};
  
\end{tikzpicture}\]
\caption{Inserting the depicted tangle preserves the number of components.}
\label{fig:comps}
\end{figure}

In the following proposition, we give closed forms for $W^P(c)$ and $Z^p(c)$.
\begin{prop}
\label{P:WpZpformulas}
We have the following formulas for $W^P(c)$ and $Z^P(c)$. For $c$ even, $W^P(c)=0=Z^P(c)$. 
For $c$ odd and $c \ge 11$,  writing $d=\frac{c-1}{2}$, we have the following
\begin{enumerate}
    \item[(a)] $\displaystyle W^P(c) = \frac{1+3d}{3} \left( 2^{d-1} \right) - \frac{2}{3}(-1)^{d}$, \\
    \item[(b)] $\displaystyle Z^P(c) =  \frac{(3d+1)}{27}(2^{d-1})- \frac{14}{27}(-1)^{d}+\frac{2}{3}(-1)^{d}\left(\delta_{1,d~\text{mod}~3}+3\delta_{2,d~\text{mod}~3}\right)$.
\end{enumerate}
\end{prop}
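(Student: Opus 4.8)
The plan is to reprise the three-step method used for Proposition~\ref{P:WZforumlas}, but with the palindromic bijections $p_1,\dots,p_4$ of Lemma~\ref{L:pbijections}, the jump table of Lemma~\ref{pxyzchange}, and the cardinalities of Lemma~\ref{KpcTpc} in place of their non-palindromic counterparts. The even case is immediate, since Lemma~\ref{L:pbijections} gives $K^P(c)=\varnothing$ for even $c$, making both sums empty. For odd $c$ I would partition $K^P(c)=K^P_{22}(c)\sqcup K^P_{\bar 32}(c)\sqcup K^P_3(c)\sqcup K^P_{\bar 4}(c)$ and apply $g^P=p_1\cup p_2\cup p_3\cup p_4$. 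Because $p_1$ and $p_4$ land in $K^P(c-4)$ while $p_2\cup p_3$ maps bijectively onto $K^P_2(c-2)\sqcup K^P_{\bar 3}(c-2)=K^P(c-2)$, summing $w$ over each block and recording the jumps $\Delta w$ from Lemma~\ref{pxyzchange} produces the recursion
\[
W^P(c)=W^P(c-2)+2W^P(c-4)+D^P_w(c),
\]
and identically $Z^P(c)=Z^P(c-2)+2Z^P(c-4)+D^P_z(c)$, where $D^P_w(c)$ and $D^P_z(c)$ are the weighted counts of the vectors listed in the two tables. After reindexing by $d=\frac{c-1}{2}$, each of these becomes a non-homogeneous Jacobsthal recursion $\widetilde X(d)=\widetilde X(d-1)+2\widetilde X(d-2)+\widetilde D(d)$ with homogeneous solution $u_0 2^{d}+v_0(-1)^{d}$.

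For the $w$-correction I would collect the contributions
\[
D^P_w(c)=2|K^P_{\bar 322}(c)|+4|K^P_{232}(c)|+3|K^P_{\{(2,3,3,2)\}}(c)|+2|K^P_{\bar 332}(c)|+2|K^P_{\bar 323}(c)|+2|K^P_{\bar 33}(c)|+2|K^P_{24}(c)|
\]
and rewrite each summand via a short composition of the $p_i$'s as a cardinality covered by Lemma~\ref{KpcTpc}(a)--(c), collapsing the whole expression to a single Jacobsthal value plus a bounded correction. A particular solution of shape $u_1\,d\,2^{d}$ added to the homogeneous part, with constants fixed by two directly enumerated initial values (say at $c=11,13$), should then yield formula (a).

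The term $D^P_z(c)$ is the genuine obstacle and will demand the most bookkeeping. It is a signed sum of ten cardinalities,
\[
D^P_z(c)=|K^P_{\{(2,2,3^{[m\ge 1]},2,2)\}}(c)|+2|K^P_{2 3^{[m\ge 1]}22}(c)|+\cdots-2|K^P_{2 3^{[m\ge 1]}4}(c)|-|K^P_{\{(4,3^{[m\ge 1]},4)\}}(c)|,
\]
several of whose entries — the string-counts $|K^P_{2 3^{[m\ge 1]}22}(c)|$ and the singleton families such as $|K^P_{\{(2,4,3^{[n\ge 0]},4,2)\}}(c)|$ — are precisely the delicate quantities governed by Lemma~\ref{KpcTpc}(d),(e). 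I would first use further compositions of the $p_i$'s, exactly as in the displayed block of bijections in the proof of Proposition~\ref{P:WZforumlas}, to re-express each such count at level $c$ in terms of the same counts at levels $c-2,\dots,c-8$, and then substitute the recursion of part (e) so that the bulky $|K^P_{2 3^{[m\ge 1]}22}|$ terms cancel against one another. What survives should be a single $j$-value plus the residual corrections $\delta(d)=\delta_{1,d\bmod 6}+\delta_{2,d\bmod 6}-\delta_{3,d\bmod 6}-\delta_{5,d\bmod 6}$ from part (e) and the $1-\delta_{2,d\bmod 3}$ contributions from part (d); reconciling these residues modulo $3$ and modulo $6$ is where the calculation is most error-prone, and I expect it to be the hard part of the whole proposition.

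Finally, with $D^P_z(c)$ reduced to a closed expression, I would solve $\widetilde Z(d)=\widetilde Z(d-1)+2\widetilde Z(d-2)+\widetilde D_z(d)$ just as the analogous recursion was solved for $Z(c)$: to the homogeneous solution $u_0 2^{d}+v_0(-1)^{d}$ I would add a particular solution whose ansatz carries a $d\,2^{d}$ term together with periodic pieces of the form $(-1)^{d}\delta_{r,d\bmod 3}$ to absorb the mod-$3$ forcing. Fixing all coefficients against directly computed initial values should reproduce formula (b), the tail $\frac{2}{3}(-1)^{d}\!\left(\delta_{1,d\bmod 3}+3\delta_{2,d\bmod 3}\right)$ arising exactly from these periodic corrections.
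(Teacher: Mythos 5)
Your proposal follows essentially the same route as the paper's proof: the same four-block partition of $K^P(c)$, the same use of $p_1,\dots,p_4$ and Lemmas~\ref{pxyzchange} and \ref{KpcTpc} to produce the non-homogeneous recursions $W^P(c)=W^P(c-2)+2W^P(c-4)+D^P_w(c)$ and likewise for $Z^P$, and the same ansatz (a $d\,2^{d}$ term plus periodic $(-1)^{d}\delta_{r,d\bmod 3}$ pieces) fitted to directly computed initial values. The only small divergence is in your anticipation of $D^P_z$: in the actual computation the four singleton families from Lemma~\ref{KpcTpc}(d) cancel in pairs (via $p_1,p_4$ and $p_2,p_3$), so the surviving mod-$3$ corrections come entirely from the $\delta(d)$ residues of part (e), exactly as you otherwise describe.
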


\begin{proof} We proceed as in Proposition~\ref{P:WZforumlas}. We assume $c$ odd and $c \ge 11$. By starting the recursion at $c=11$ we avoid contributions to $\Delta w$ from $\mathbf b = (2,3,3,2)$ and can also use Lemma~\ref{KpcTpc} to find $|K^P_{22}(c-4)|$.
If we partition $K^P(c)$ into four subsets $K^P(c) = K^P_{22}(c) \sqcup K^P_{\bar 32}(c)  \sqcup  K^P_{3}(c) \sqcup  K^P_{\bar  4}(c)$,
then by Lemma~\ref{pxyzchange} we have
\begin{align*}
W^P(c) =&~W^P_{22}(c) + W^P_{\bar 32}(c)  + W^P_{3}(c) + W^P_{\bar  4}(c) \\
=&~W^P(c-4) + 2|K^P_{\bar322}(c)| + W^P_2(c-2) + 4|K^P_{232}(c)|+ 2| K^P_{\bar3 32}(c)| +  \\
&~W^P_{\bar 3}(c-2) + 2|K^P_{\bar{3}23}(c)|+ 2|K^P_{\bar{3}3}(c)| + W^P(c-4) + 2|K^P_{24}(c)| \\
=&~W^P(c-2) + 2 W^P(c-4) + D^P_w(c)
\end{align*}
where
\[
D^P_w(c) = 2\left(|K^P_{\bar322}(c)| +  2|K^P_{232}(c)|+ | K^P_{\bar3 32}(c)| + |K^P_{\bar{3}23}(c)|+ |K^P_{\bar{3}3}(c)| + |K^P_{24}(c)|\right).
\]
We simplify $D^P_w(c)$ using the bijections $p_1$, $p_2$, $p_3$, $p_4$, and Lemma~\ref{KpcTpc}.
\begin{align*}
\frac{D^P_w(c)}{2}  =&~|K^P_{\bar 322}(c)| + 2|K^P_{232}(c)| + |K^P_{\bar 332}(c)| + |K^P_{\bar 323}(c)| + |K^P_{\bar 33}(c)| + |K^P_{24}(c)| \\
=&~|K^P_{\bar 3}(c-4)| + 2|K^P_{222}(c-2)| + |K^P_{\bar 322}(c-2)| + |K^P_{\bar 33}(c-2)| + |K^P_{\bar4}(c-2)| + \\
&~|K^P_{22}(c-4)| \\
=&~j(d-2) + 2|K^P_{2}(c-6)| + |K^P_{\bar 3}(c-6)| + |K^P_{\bar 4}(c-4)| + 2 j(d-3) + 2j(d-4) \\
=&~j(d-2) + 2j(d-3) + j(d-3) + 2j(d-4) + 2 j(d-3) + 2j(d-4) \\
=&~j(d-1) + j(d-2) + j(d-3) + j(d-3)+2j(d-4) \\
=&~j(d-1) + 2j(d-2) + j(d-3) \\
=&~j(d) + j(d-3) \\
=&~3 \cdot 2^{d-3}.
\end{align*}
Therefore, $W^P(c)$ satisfies the recursion
\[
W^P(c) = W^P(c-2)+2W^P(c-4) + 3 \cdot 2^{(c-5)/2}.
\]
Solving the recursion with initial conditions $W^P(7)=14$ and $W^P(9)=34$ gives the closed formula
\[
W^P(c) = \frac{2+6d}{3} \left( 2^{d-2} \right) - \frac{2}{3}(-1)^{d}.
\]

In a similar manner,
\begin{align*}
Z^P(c) =&~Z^P_{22}(c) + Z^P_{\bar 32}(c)  + Z^P_{3}(c) + Z^P_{\bar  4}(c) \\
=&~Z^P(c-4) + |K^P_{\{(2,2,3^{[m\ge 1]},2,2)\}}(c)| + 2|K^P_{2 3^{[m\ge 1]} 2 2}(c)| \, + \\
&~Z^P_2(c-2) +  2|K^P_{232}(c)|- |K^P_{\{(2,4,3^{[n\ge 0]},4,2)\}}(c)| -2 |K^P_{23^{[n\ge 0]}42}(c)|\,  +  \\
&~Z^P_{\bar 3}(c-2) + |K^P_{\{(3,2,3^{[m \ge 1]},2,3)\}}(c)| + 2|{K^P_{2 3^{[m\ge 1]} 2 3}(c)}| \, + \\
&~Z^P(c-4) - 2 |K^P_{2 3^{[m\ge 1]} 4}(c)| - |K_{\{(4, 3^{[m\ge 1]}, 4)\}}(c)| \\
=&~Z^P(c-2) + 2 Z^P(c-4) + D^P_z(c)
\end{align*}
where
\begin{align*}
D^P_z(c) =&~|K^P_{\{(2,2,3^{[m\ge 1]},2,2)\}}(c)| + 2|K^P_{2 3^{[m\ge 1]} 2 2}(c)|  + 2|K^P_{232}(c)| \, -\\
&~|K^P_{\{(2,4,3^{[n\ge 0]},4,2)\}}(c)| -2 |K^P_{23^{[n\ge 0]}42}(c)|+ |K^P_{\{(3,2,3^{[m \ge 1]},2,3)\}}(c)| \, +\\
&~ 2|{K^P_{2 3^{[m\ge 1]} 2 3}(c)}| - 2 |K^P_{2 3^{[m\ge 1]} 4}(c)|  - |K^P_{\{(4, 3^{[m\ge 1]}, 4)\}}(c)|.
\end{align*}
First note that, for $c$ odd and $c \ge 11$, if $d = \frac{c-1}{2}$, then
\[
\begin{array}{ccc}
K^P_{\{(2,2,3^{[m\ge 1]},2,2)\}}(c) = K^P_{\{(2,2,3^{[d-2]},2,2)\}}(c) & & K^P_{\{(2,4,3^{[n\ge 0]},4,2)\}}(c) = K^P_{\{(2,4,3^{[d-4]},4,2)\}}(c) \\
\\
K^P_{\{(3,2,3^{[m \ge 1]},2,3)\}}(c) = K^P_{\{(3,2,3^{[d-3]},2,3)\}}(c) & &  K^P_{\{(4, 3^{[m\ge 1]}, 4)\}}(c) = K^P_{\{(4, 3^{[d-3]}, 4)\}}(c)
\end{array}
\]
Using the bijections, $p_1$ and  $p_4$ together with Lemma~\ref{KpcTpc} we then have
\[
|K^P_{\{(2,2,3^{[d-2]},2,2)\}}(c)| - |K^P_{\{(4, 3^{[d-3]}, 4)\}}(c)| = |K^P_{\{(3^{[d-2]})\}}(c-4)| - |K^P_{\{(2, 3^{[d-3]}, 2)\}}(c-4)| = 0
\]
Similarly, using the bijections $p_2$ and $p_3$ 
\[
-|K^P_{\{(2,4,3^{[d-4]},4,2)\}}(c)| + |K^P_{\{(3,2, 3^{[d-3]}, 2,3)\}}(c)| = -|K^P_{\{(2,3^{[d-2]},2)\}}(c-2)| + |K^P_{\{(3^{[d-1]})\}}(c-2)| = 0
\]
Thus,
\[
D^P_z(c) = 2 \left(|K^P_{2 3^{[m\ge 1]} 2 2}(c)| + |K^P_{232}(c)|   - |K^P_{23^{[n\ge 0]}42}(c)|+ |{K^P_{2 3^{[m\ge 1]} 2 3}(c)}| -  |K^P_{2 3^{[m\ge 1]} 4}(c)|\right) .
\]

In order to simplify 
$D^P_z(c)$ we further partition  $K_{2 3^{[n\ge 0]}42}(c)$, $K_{2 3^{[m\ge 1]}23}(c)$, and $K_{23^{[m\ge 1]}4}(c)$ for the bijections below.
\begin{align*}
p_1  \circ p_2 &: K^P_{232}(c) \rightarrow K^P_2(c-6) \\
p_1 \circ p_2 \circ p_2 &: K^P_{242}(c) \rightarrow K^P_2(c-8) \\
p_2 \circ p_2 &: K^P_{2 3^{[m\ge 1]}42}(c) \rightarrow K^P_{2 3^{[m\ge 1]}22}(c-4), \\
p_4 \circ p_3 \circ p_3 &: K^P_{2323}(c) \rightarrow K^P_{22}(c-8)\\
p_3 \circ p_3 &: K^P_{23^{[\ell\ge 2]}23}(c) \rightarrow K^P_{23^{[m \ge 1]}4}(c-4),\\
p_1 \circ p_2  \circ p_4 &: K^P_{234}(c) \rightarrow K^P_2(c-10) \\
p_2  \circ p_4 &: K_{23^{[\ell \ge 2]}4}(c) \rightarrow K^P_{23^{[m \ge 1]}22}(c-6).
\end{align*}

We now simplify $D^P_z(c)$ using the bijections $p_1$, $p_2$, $p_3$, $p_4$, and Lemma~\ref{KpcTpc}.
\begin{align*}
\frac{D^P_z(c)}{2}  =&~|K^P_{2 3^{[m\ge 1]} 2 2}(c)| + |K^P_{232}(c)|   - |K^P_{23^{[n\ge 0]}42}(c)|+ |{K^P_{2 3^{[m\ge 1]} 2 3}(c)}| -  |K^P_{2 3^{[m\ge 1]} 4}(c)| \\
=&~|K^P_{2 3^{[m\ge 1]} 2 2}(c)| + |K^P_{2}(c-6)| - |K^P_{242}(c)|  - |K^P_{23^{[m\ge 1]}42}(c)|~+ \\
&~~|{K^P_{2323}(c)}| + |{K^P_{2 3^{[\ell \ge 2]} 2 3}(c)}| - |K^P_{2 34}(c)| -   |K^P_{2 3^{[\ell\ge 2]} 4}(c)| \\
=&~|K^P_{2 3^{[m\ge 1]} 2 2}(c)| + |K^P_{2}(c-6)| - |K^P_{2}(c-8)|  - |K^P_{23^{[m\ge 1]}22}(c-4)|~+ \\
&~~|{K^P_{22}(c-8)}| + |{K^P_{2 3^{[m \ge 1]}4}(c-4)}| - |K^P_{2}(c-10)| -   |K^P_{2 3^{[m\ge 1]} 22}(c-6)| \\
=&~|K^P_{2 3^{[m\ge 1]} 2 2}(c)| + j(d-3) - j(d-4)  - |K^P_{23^{[m\ge 1]}22}(c-4)|+2j(d-6)~+ \\
&~~|{K^P_{234}(c-4)}|+ |{K^P_{2 3^{[\ell \ge 2]}4}(c-4)}| - j(d-5) -   |K^P_{2 3^{[m\ge 1]} 22}(c-6)| \\
=&~\delta(d) + j(d-3)+2j(d-6)+ |{K^P_{2}(c-14)}|+ |{K^P_{2 3^{[m \ge 1]}22}(c-10)}|~- \\
&~~  j(d-5) -   |K^P_{2 3^{[m\ge 1]} 22}(c-6)| \\
=&~\delta(d) + j(d-3)+2j(d-6)+ |{K^P_{2 3^{[m \ge 1]}22}(c-10)}|- j(d-5) ~- \\
&~~|{K^P_{2 3^{[m \ge 1]}22}(c-10)}| - \delta(d-3)\\
=&~j(d-3)+ (-1)^{d-5}+\delta(d) - \delta(d-3) \\
=&~j(d-3) + (-1)^d (\delta_{2,d~\mbox{\scriptsize mod}~3} - 2  \delta_{1,d~\mbox{\scriptsize mod}~3}).
\end{align*}

Therefore, $Z^P(c)$ satisfies the recursion
\[
Z^P(c) = Z^P(c-2)+2Z^P(c-4) + 2j(d-3) + 2(-1)^d (\delta_{2,d~\mbox{\scriptsize mod}~3} - 2  \delta_{1,d~\mbox{\scriptsize mod}~3}).
\]
Solving the recursion with initial conditions $Z^P(7)=2$ and $Z^P(9)=4$ gives the closed formula
\[\pushQED{\qed}
Z^P(c) =  \frac{(3d+1)2^{d-1}-14(-1)^{d}}{27}+\frac{2}{3}(-1)^{d}\left(\delta_{1,d~\text{mod}~3}+3\delta_{2,d~\text{mod}~3}\right).\qedhere
\]
\end{proof}

\begin{proof}[Proof of Theorem \ref{T:Ep1}]
Theorem \ref{thm:cc_formula} implies that
\[ \overline{\Gamma}(c) = \frac{1}{|\mathcal K_c|}\left(\sum_{K \in \mathcal K_c} (w(K)-z(K)) \right) = \frac{W(c)-Z(c)+W^p(c)-Z^p(c)}{2|\mathcal{K}_c|}.\] 
Theorem \ref{T:Ernst-Sumners} gives a formula for $|\mathcal{K}_c|$, Proposition \ref{P:WZforumlas} gives formulas for $W(c)$ and $Z(c)$, and Proposition \ref{P:WpZpformulas} gives formulas for $W^p(c)$ and $Z^p(c)$.
\end{proof}

Theorem \ref{T:Ep1} implies the first part of Theorem \ref{T:ccave}.

\section{From unoriented genus to crosscap number} 
\label{sec:epsilon}

{Theorem~\ref{T:2BridgeCCNew} implies that} the following set of tuples describes 2-bridge knots and links for which unoriented genus is strictly less than crosscap number:
\[E(c)=\left\{(e_1,\hdots,e_k)\in(2\mathbb{Z})^k:~|e_i|\geq 4,~c=\sum_{i=1}^k|e_i|-\sum_{i=1}^{k-1}\delta_{\text{sign}(e_i),\text{sign}(e_{i+1})}\right\}.\]
Partition $E(c)=K^E(c)\sqcup L^E(c)$, where the tuples in $K^E(c)$ describe knots and those in $ L^E(c)$ describe links:
\begin{equation*}
\begin{array}{lcl}
\displaystyle K^E(c)=\left\{(e_1,\hdots,e_k)\in E(c):~k\text{ even}\right\}, & & \displaystyle L^E(c)=\left\{(e_1,\hdots,e_k)\in E(c):~k\text{ odd}\right\}.
\end{array}
\end{equation*}
Further partition $K^E(c)=K^E_4(c)\sqcup K^E_{\bar 6}(c)$ and $ L^E(c)=L^E_4(c)\sqcup L^E_{\bar 6}(c)$ by defining
\begin{equation*}
\begin{array}{lcl}
\displaystyle K^E_4(c)=\left\{(e_1,\hdots,e_k)\in K^E(c):~|e_k|=4\right\}, & & \displaystyle K^E_{\bar{6}}(c)=\left\{(e_1,\hdots,e_k)\in K^E(c):~|e_k|\geq6\right\},\\
\displaystyle L^E_4(c)=\left\{(e_1,\hdots,e_k)\in L^E(c):~|e_k|=4\right\}, & & L^E_{\bar{6}}(c)=\left\{(e_1,\hdots,e_k)\in L^E(c):~|e_k|\geq6\right\}.
\end{array}
\end{equation*}
Also, as a technical convenience, set $E(0)=\{()\}=K^E(0)$. The following functions 
\begin{align*}
f_K&:K^E_{\bar{6}}(c)\to K^E(c-2),~f_L:L^E_{\bar{6}}(c)\to L^E(c-2),\\ g_K&:K^E_4(c)\to L^E(c-3)\sqcup L^E(c-4),\text{ and }g_L:L^E_4(c)\to K^E(c-3)\sqcup K^E(c-4) \end{align*} are bijections for $c\geq 4$: 
\begin{align*}
f_K,f_L&:(e_1,\hdots,e_k)\mapsto\begin{cases}
(e_1,\hdots,e_k-2)&e_k>4\\
(e_1,\hdots,e_k+2)&e_k<-4;\end{cases}\\
g_K,g_L&:(e_1,\hdots,e_k)\mapsto (e_1,\hdots,e_{k-1}).
\end{align*}
It follows that
\begin{align*}
|E(c)|&=|K^E_{\bar{6}}(c)|+|K^E_4(c)|+|L^E_{\bar{6}}(c)|+|L^E_4(c)|\\
&=|K^E(c-2)|+|L^E(c-3)|+|L^E(c-4)|+|L^E(c-2)|+|K^E(c-3)|+|K^E(c-4)|\\
&=|E(c-2)|+|E(c-3)|+|E(c-4)|.
\end{align*}
Further, denoting $\Delta(c)=|K^E(c)|-|L^E(c)|$, we also have
\begin{align*}
\Delta(c)&=|K^E_{\bar{6}}(c)|+|K^E_4(c)|-|L^E_{\bar{6}}(c)|-|L^E_4(c)|\\
&=|K^E(c-2)|+|L^E(c-3)|+|L^E(c-4)|-|L^E(c-2)|-|K^E(c-3)|-|K^E(c-4)|\\
&=\Delta({c-2})-\Delta({c-3})-\Delta({c-4}).
\end{align*}
This gives two homogeneous recurrence relations, whose characteristic polynomials are
\begin{equation}\label{E:1}
x^4-x^2-x-1=(x+1)(x^3-x^2-1)
\end{equation}
and
\begin{equation}\label{E:2}
x^4-x^2+x+1=(x+1)(x^3-x^2+1).
\end{equation}
We will express the roots of Equations~\eqref{E:1} and \eqref{E:2} in terms of  
\begin{equation*}\label{E:3}
\begin{array}{lclcl}
\alpha=\root 3 \of{\frac{1}{2}\left(29+3\sqrt{93}\right)}, & &
\omega=e^{\pi i/3}, & & \beta=-\root 3 \of{\frac{1}{2}\left(25+3\sqrt{69}\right)}.
\end{array}
\end{equation*}
Equation~\eqref{E:1} has two real roots, 
\begin{equation}\label{E:xi1}
\begin{array}{lcl}
x_1=-1,& &x_2=\frac{1}{3}\left(1+\alpha+\alpha^{-1}\right)\approx 1.466,
\end{array}
\end{equation}
and two complex conjugate roots $x_3,x_4$ given by 
\begin{equation}\label{E:xi2}
x_{(7\pm 1)/2}=\frac{1}{3}(1-\alpha\omega^{\mp1}-\alpha^{-1}\omega^{\pm1})\approx-0.233\pm 0.793i.
\end{equation}
Equation~\eqref{E:2} has two real roots, 
\begin{equation}\label{E:yi1}
\begin{array}{lcl}
y_1=-1,& &y_2=\frac{1}{3}\left(1+\beta+\beta^{-1}\right)\approx -0.755,
\end{array}
\end{equation}
and two complex conjugate roots $y_3,y_4$ given by 
\begin{equation}\label{E:yi2}
y_{(7\pm 1)/2}=\frac{1}{3}-\beta\omega^{\pm1}-\beta^{-1}\omega^{\mp1}\approx0.877\pm0.745i.
\end{equation}
Then $|E(c)|$ and $\Delta(c)$ are given by functions of the form
\begin{equation*}\label{E:EForm}
|E(c)|=u_1{x_1}^{c-4}+u_2{x_2}^{c-4}+u_3{x_3}^{c-4}+u_4{x_4}^{c-4}
\end{equation*}
and
\[\Delta(c)=v_1{y_1}^{c-4}+v_2{y_2}^{c-4}+v_3{y_3}^{c-4}+v_4{y_4}^{c-4}\]
for some coefficients $u_1,u_2,u_3,u_4,v_1,v_2,v_3,v_4\in\mathbb{C}$, whose values we obtain by using the initial values in Table~\ref{T:EDelta} and Cramer's rule:

\begin{table}
\begin{center}
\begin{tabular}{||c|cc|cc||}
\hline%%%%%
$c$&$K^E(c)$&$L^E(c)$&$|E(c)|$&$\Delta(c)$\\ 
\hline%%%%%
4&$\varnothing$&$\{(4),(-4)\}$&2&-2\\
5&$\varnothing$&$\varnothing$&0&0\\
6&$\varnothing$&$\{(6),(-6)\}$&2&-2\\
7&$\{(4,4),(-4,-4)\}$&$\varnothing$&2&2\\
\hline
\end{tabular}
\caption{Knots and links with unoriented genus strictly less than crosscap number}\label{T:EDelta}
\end{center}
\end{table}

\begin{equation}\label{E:ci1}
\begin{array}{lcl}
u_1=\frac{\left|\begin{smallmatrix}2&1&1&1\\
0&x_2&x_3&x_4\\
2&{x_2}^2&{x_3}^2&{x_4}^2\\
2&{x_2}^3&{x_3}^3&{x_4}^3\\
\end{smallmatrix}\right|}{\left|\begin{smallmatrix}1&1&1&1\\
x_1&x_2&x_3&x_4\\
{x_1}^2&{x_2}^2&{x_3}^2&{x_4}^2\\
{x_1}^3&{x_2}^3&{x_3}^3&{x_4}^3\\
\end{smallmatrix}\right|}=\frac{2}{3}, & &
v_1=\frac{\left|\begin{smallmatrix}-2&1&1&1\\
0&y_2&y_3&y_4\\
-2&{y_2}^2&{y_3}^2&{y_4}^2\\
2&{y_2}^3&{y_3}^3&{y_4}^3\\
\end{smallmatrix}\right|}{\left|\begin{smallmatrix}1&1&1&1\\
y_1&y_2&y_3&y_4\\
{y_1}^2&{y_2}^2&{y_3}^2&{y_4}^2\\
{y_1}^3&{y_2}^3&{y_3}^3&{y_4}^3\\
\end{smallmatrix}\right|}=-2,
\end{array}
\end{equation}
and similarly, using bars to denote complex conjugates,
\begin{equation}\label{E:ci2}
u_2\approx 0.727,~
u_3\approx 0.303-0.163i,~
u_4=\overline{u_3},~
v_2\approx 1.090,~
v_3\approx -0.545-0.148i,~ 
v_4=\overline{v_3}.
\end{equation}
Thus, with the values from Equations~\eqref{E:xi1} through \eqref{E:ci2}, we have the following for $c\geq 4$:
\begin{equation}\label{E:K^E(c)}
|K^E(c)|=\frac{1}{2}\left(|E(c)|+\Delta(c)\right)=\frac{1}{2}\sum_{i=1}^4\left(u_i{x_i}^{c-4}+v_iy_i^{c-4}\right)
\end{equation}

Accounting for palindromes in this setting is similar, but a bit more straightforward, as our bijections will allow us to consider only knots, and not 2-component links as well.   Let 
$K^{EP}(c)$ 
be the set of palindromes in $K^E(c)$ (with $K^{EP}(0)=\{()\}=K^E(0)$), 
and partition 
$K^{EP}(c)=K^{EP}_4(c)\sqcup K^{EP}_{\bar{6}}(c)$ 
as above for all $c>0$. The following functions $h:K^{EP}_{4}(c)\to K^{EP}(c-6)\sqcup K^{EP}(c-8)$ and $q:K^{EP}_{\bar 6}(c)\to K^{EP}(c-4)$ are bijections for $c\geq 8$:
\begin{align*}
h:(e_1,e_2,\hdots,e_{k-1},e_k)&\mapsto (e_2,\hdots,e_{k-1}),\\
q:(e_1,\hdots,e_k)&\mapsto \begin{cases}
(e_1-2,e_2,\hdots,e_{k-1},e_k-2)&e_k>4\\
(e_1+2,e_2,\hdots,e_{k-1},e_k+2)&e_k<-4.\end{cases}
\end{align*}
From this it follows that for all $c\geq 8$ we have
\begin{equation*}\label{E:PRecur}
|K^{EP}(c)|=|K^{EP}(c-4)|+|K^{EP}(c-6)|+|K^{EP}(c-8)|.
\end{equation*}
\begin{table}
\begin{center}
\begin{tabular}{|ccc|}
\hline%%%%%
$c$&$K^{EP}(c)$&$|K^{EP}(c)|$\\
\hline%%%%%
7&$\{(4,4),(-4,-4)\}$&2\\
8&$\varnothing$&0\\
9&$\varnothing$&0\\
10&$\varnothing$&0\\
11&$\{(6,6),(-6,-6)\}$&2\\
12&$\varnothing$&0\\
13&$\{(4,4,4,4),(-4,-4,-4,-4)\}$&2\\
14&$\varnothing$&0\\
\hline
\end{tabular}
\caption{Palindromic knots with unoriented genus strictly less than crossing number}\label{T:P}
\end{center}
\end{table}

The initial values in Table~\ref{T:P} immediately reveal that $|K^{EP}(c)|=0$ when $c$ is even, and when $c$ is odd these values mimic those in the earlier computation of $u_1,u_2,u_3,u_4$, and so we have the following for all $c\geq 7$:
\begin{equation}\label{E:KEP}
|K^{EP}(c)|=
\frac{1}{2}\delta_{1,c~\mbox{\scriptsize mod}~2}\sum_{i=1}^4u_ix_i^{\frac{c+1}{2}-4}.
\end{equation}
\begin{theorem}\label{T:Epsilon_2}
For $c\geq 7$, the number of distinct $c$-crossing 2-bridge knots (counting non-isotopic mirror images separately) with unoriented genus strictly less than crosscap number is
\begin{equation}\label{E:Epsilon_2}
\frac{1}{2}\left(|K^E(c)|+|K^{EP}(c)|\right)=\frac{1}{4}\sum_{i=1}^4\left(u_ix_i^{\frac{c-7}{2}}\left({x_i}^{\frac{c-1}{2}}+\delta_{1,c~\mbox{\scriptsize mod}~2}\right)+v_iy_i^{c-4}\right),
\end{equation}
where the values of $x_i$, $y_i$, $u_i$, and $v_i$ are given in Equations~\eqref{E:xi1}--\eqref{E:ci2}.\end{theorem}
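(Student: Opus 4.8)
The plan is to observe that all of the substantive combinatorics is already finished, so that only an orbit-count and a one-line algebraic simplification remain. First I would show that the number of distinct $c$-crossing knots with $\Gamma<\gamma$ equals $\tfrac12\bigl(|K^E(c)|+|K^{EP}(c)|\bigr)$. By Theorem~\ref{T:2BridgeCCNew}, a $2$-bridge knot satisfies $\gamma=\Gamma+1$ precisely when its all-even subtractive continued fraction---unique because $q$ is odd for a knot---has every $|e_i|\ge 4$; since the associated diagram is reduced and alternating, its crossing number is exactly $c=\sum|e_i|-\sum\delta_{\text{sign}(e_i),\text{sign}(e_{i+1})}$, so these knots are enumerated (with $k$ even) by $K^E(c)$. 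Exactly as in the derivation of $|\mathcal K_c|=\tfrac12(|K(c)|+|K^P(c)|)$, Schubert's classification shows that the only coincidence among tuples in $K^E(c)$ is reversal $(e_1,\dots,e_k)\mapsto(e_k,\dots,e_1)$, an involution whose fixed points are precisely the palindromes comprising $K^{EP}(c)$; counting reversal-orbits (equivalently, Burnside applied to $\mathbb Z/2$) thus contributes the factor $\tfrac12$ and the palindromic correction $+\tfrac12|K^{EP}(c)|$.

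Next I would substitute the two closed forms already established: Equation~\eqref{E:K^E(c)} gives $|K^E(c)|=\tfrac12\sum_{i=1}^4(u_ix_i^{c-4}+v_iy_i^{c-4})$, and Equation~\eqref{E:KEP} gives $|K^{EP}(c)|=\tfrac12\delta_{1,c\bmod 2}\sum_{i=1}^4u_ix_i^{(c+1)/2-4}$. Adding, halving, and using $\frac{c+1}{2}-4=\frac{c-7}{2}$ yields
\[
\tfrac12\bigl(|K^E(c)|+|K^{EP}(c)|\bigr)=\tfrac14\sum_{i=1}^4\Bigl(u_ix_i^{c-4}+\delta_{1,c\bmod 2}\,u_ix_i^{(c-7)/2}+v_iy_i^{c-4}\Bigr).
\]
Finally, factoring $u_ix_i^{(c-7)/2}$ out of the two $x_i$-terms---using $c-4=\frac{c-7}{2}+\frac{c-1}{2}$, so that $x_i^{c-4}=x_i^{(c-7)/2}x_i^{(c-1)/2}$---rewrites the summand as $u_ix_i^{(c-7)/2}\bigl(x_i^{(c-1)/2}+\delta_{1,c\bmod 2}\bigr)+v_iy_i^{c-4}$, which is exactly the right-hand side of Equation~\eqref{E:Epsilon_2}.

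I expect no real obstacle here, since the recursions and their solutions, together with the palindrome bookkeeping encoded in Equations~\eqref{E:K^E(c)} and \eqref{E:KEP}, have already done the heavy lifting upstream. The single point demanding genuine care---as opposed to routine algebra---is the orbit-counting step: one must confirm that reversal is the \emph{only} identification on $K^E(c)$ and that its fixed set is exactly $K^{EP}(c)$. Because this is verbatim the reasoning behind $|\mathcal K_c|=\tfrac12(|K(c)|+|K^P(c)|)$ used for $\overline\Gamma(c)$, I would justify it by appeal to that parallel rather than re-deriving Schubert's relations, noting only that reversal preserves evenness, the bound $|e_i|\ge 4$, the even length $k$, and the crossing number $c$, so it genuinely acts as an involution on $K^E(c)$.
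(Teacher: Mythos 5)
Your proposal is correct and matches the paper's argument: the paper's proof of Theorem~\ref{T:Epsilon_2} is exactly the two-line computation of adding Equations~\eqref{E:K^E(c)} and \eqref{E:KEP}, halving, and factoring $u_ix_i^{(c-7)/2}$ out of the two $x_i$-terms. Your additional first paragraph (justifying via Theorem~\ref{T:2BridgeCCNew} and the reversal involution that $\tfrac12(|K^E(c)|+|K^{EP}(c)|)$ counts the knots in question) is reasoning the paper leaves implicit in the surrounding discussion, and it is sound.
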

\begin{proof}
Equations~\eqref{E:K^E(c)} and \eqref{E:KEP} give:
\begin{align*}\pushQED\qed
\frac{1}{2}\left(|K^E(c)|+|K^{EP}(c)|\right)&=\frac{1}{4}\left(\sum_{i=1}^4u_i{x_i}^{c-4}+v_iy_i^{c-4}+\delta_{1,c~\mbox{\scriptsize mod}~2}\sum_{i=1}^4u_i x_i^{\frac{c+1}{2}-4}\right)\\
&=\frac{1}{4}\sum_{i=1}^4\left(u_ix_i^{\frac{c-7}{2}}\left({x_i}^{\frac{c-1}{2}}+\delta_{1,c~\mbox{\scriptsize mod}~2}\right)+v_iy_i^{c-4}\right).\qedhere
\end{align*}
\end{proof}

\begin{theorem}\label{T:Ep2}
For all $c\geq 4$, let $d=\frac{c-1}{2}$, and let the values of $x_i$, $y_i$, $u_i$, and $v_i$ be from Equations~\eqref{E:xi1}--\eqref{E:ci2}. The portion $\varepsilon_2(c)=\frac{\frac{1}{2}\left(|K^E(c)|+|K^{EP}(c)|\right)}{|\mathcal K_c|}$ of 2-bridge knots with unoriented genus strictly less than crosscap number is
\begin{equation}\label{E:Ep2}
\varepsilon_2(c)=  \left\{ \begin{array}{ll}
\displaystyle \sum_{i=1}^4\frac{3\left(u_ix_i^{d-3}\left({x_i}^{d}+\delta_{1,c~\mbox{\scriptsize mod}~2}\right)+v_iy_i^{c-4}\right)}{4(2^{c-2}-1)} & \mbox{$c$ even} \\
\\
\displaystyle \sum_{i=1}^4\frac{3\left(u_ix_i^{d-3}\left({x_i}^{d}+\delta_{1,c~\mbox{\scriptsize mod}~2}\right)+v_iy_i^{c-4}\right)}{4(2^{c-2}+2^{d})} & \mbox{$c \equiv 1 \Mod{4}$} \\
\\
\displaystyle \sum_{i=1}^4\frac{3\left(u_ix_i^{d-3}\left({x_i}^{d}+\delta_{1,c~\mbox{\scriptsize mod}~2}\right)+v_iy_i^{c-4}\right)}{4(2^{c-2}+2^{d}+2)} & \mbox{ $c \equiv 3 \Mod{4}$.}
\end{array}
\right.
\end{equation}
In particular, $\varepsilon_2(c)$ approaches 0 as $c\to\infty$.
\end{theorem}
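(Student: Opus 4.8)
The plan is to prove Theorem~\ref{T:Ep2} in two stages: first obtain the three-case closed form for $\varepsilon_2(c)$ by a direct division, and then read off the limit from the moduli of the characteristic roots. The formula stage is essentially bookkeeping. By definition $\varepsilon_2(c)=\frac{\frac12\left(|K^E(c)|+|K^{EP}(c)|\right)}{|\mathcal K_c|}$, so I would substitute the numerator from Theorem~\ref{T:Epsilon_2} (that is, Equation~\eqref{E:Epsilon_2}) and divide by the Ernst--Sumners formula for $|\mathcal K_c|$ from Theorem~\ref{T:Ernst-Sumners}. Writing $d=\frac{c-1}{2}$ converts the exponents $\frac{c-7}{2}$ and $\frac{c-1}{2}$ in Equation~\eqref{E:Epsilon_2} into $d-3$ and $d$; note that even though these are half-integers when $c$ is even, the product $x_i^{d-3}\bigl(x_i^{d}+\delta_{1,c\bmod 2}\bigr)$ is always a combination of genuine integer powers, since for $c$ even the $\delta$-term drops out and $x_i^{d-3}x_i^{d}=x_i^{c-4}$. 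Because each case of $|\mathcal K_c|$ has the form $\tfrac13(\cdots)$, dividing replaces the prefactor $\tfrac14$ in Equation~\eqref{E:Epsilon_2} by $\tfrac{3}{4(\cdots)}$, producing exactly the three displayed expressions once the residue classes even, $1\bmod 4$, and $3\bmod 4$ are aligned with the three cases of Theorem~\ref{T:Ernst-Sumners}.

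For the limit I would compare growth rates. The denominator satisfies $|\mathcal K_c|=\Theta(2^{c})$ in every residue class, while the numerator is a fixed complex-linear combination of the powers $x_i^{c-4}$, $x_i^{(c-7)/2}$, and $y_i^{c-4}$ for $1\le i\le 4$. Thus it suffices to bound all eight root moduli strictly below $2$. Rather than rely on the numerical estimates in Equations~\eqref{E:xi1}--\eqref{E:yi2}, I would argue uniformly from the triangle inequality: if $x$ is any root of Equation~\eqref{E:1} or Equation~\eqref{E:2}, then $x^4$ equals one of the trinomials $\pm x^2\mp x\mp 1$, so
\[
|x|^4\le |x|^2+|x|+1.
\]
Setting $r=|x|$ and supposing $r\ge 2$ gives $r^4\ge 4r^2>r^2+r+1$ (because $3r^2>r+1$ for $r\ge 2$), contradicting the inequality above. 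Hence every $|x_i|<2$ and every $|y_i|<2$.

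With these bounds the conclusion is immediate. Each summand of the numerator is dominated by $C\,\lambda^{\,c}$ for some $\lambda<2$ (the palindromic powers $x_i^{(c-7)/2}$ grow at the even slower rate $\sqrt{\lambda}$), so the numerator is $o(2^{c})$, and dividing by $|\mathcal K_c|=\Theta(2^{c})$ yields
\[
\varepsilon_2(c)=\frac{o(2^{c})}{\Theta(2^{c})}\longrightarrow 0.
\]
No cancellation analysis is needed, since a crude $o(2^c)$ upper bound on the numerator already forces the ratio to vanish.

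The only genuine obstacle is the second stage, specifically making the root bound rigorous rather than numerical; the triangle-inequality argument above is the clean way around it, as it simultaneously handles both quartics and all real and complex roots without ever evaluating the explicit radical expressions for $x_2,\,x_3,\,x_4,\,y_2,\,y_3,\,y_4$. Everything else reduces to the mechanical case matching against Theorem~\ref{T:Ernst-Sumners} and the elementary observation that $2$ is the growth rate of $|\mathcal K_c|$.
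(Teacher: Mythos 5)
Your proposal is correct, and its overall architecture matches the paper's: the closed form is obtained by dividing Equation~\eqref{E:Epsilon_2} by the Ernst--Sumners count, and the limit follows because the numerator grows strictly slower than $2^c$ while $|\mathcal K_c|=\Theta(2^c)$. The one place you genuinely diverge is the justification that the characteristic roots are small. The paper simply quotes the numerical approximations from Equations~\eqref{E:xi1}--\eqref{E:ci2} (maximum root norm $x_2\approx 1.466$, maximum coefficient norm $2$) to get the explicit bound $\frac{1}{2}(|K^E(c)|+|K^{EP}(c)|)<6x_2^{c-4}$, and then compares against $|\mathcal K_c|\geq 2^{c-4}$. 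You instead prove the weaker but sufficient bound $|x|<2$ for every root of Equations~\eqref{E:1} and \eqref{E:2} via the triangle inequality: $|x|^4\leq |x|^2+|x|+1$, which fails for $|x|\geq 2$ since $r^4\geq 4r^2>r^2+r+1$. This is cleaner and avoids any reliance on unverified decimal approximations of the radical expressions; what you give up is the explicit quantitative decay rate $6(x_2/2)^{c-4}$ that the paper's version records, but that is not needed for the statement being proved. Both arguments are complete; yours is arguably the more self-contained of the two.
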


\begin{proof}
Equation~\eqref{E:Ep2} simply combines Equations~\eqref{E:Ernst-Sumners} and \eqref{E:Epsilon_2}.  
Observe in Equation~\eqref{E:Ernst-Sumners} that for all $c\geq 5$ we have $|\mathcal K_c|\geq\frac{2^{c-2}-1}{3}\geq 2^{c-4}$.  On the other hand, the maximum norm among all the $x_i$ and $y_i$ is $x_2\approx 1.466$, and the maximum norm among all the $u_i$ and $v_i$ is two, so for all $c\geq 7$, Equation~\eqref{E:Epsilon_2} gives $
\frac{1}{2}\left(|K^E(c)|+|K^{EP}(c)|\right)<\frac{1}{4}\cdot 4\cdot(2\cdot x_2^{c-4}\cdot 2+2\cdot x_2^{c-4})=6{x_2}^{c-4}$.  Therefore:
\[\pushQED\qed 0\leq \lim_{c\to \infty}{\varepsilon_2(c)}\leq \lim_{c\to \infty}6\left(\frac{{x_2}}{2}\right)^{c-4}=0.
\qedhere\]
\end{proof}

The second part of Theorem~\ref{T:ccave} follows immediately from Theorem~\ref{T:Ep2}.

\bibliographystyle{plain}
\bibliography{myReferences}

\end{document}